\newif\iflong
\newif\ifshort
\setlist[itemize]{topsep=0pt,partopsep=0pt,itemsep=0pt,parsep=0pt}
\setlist[itemize,1]{label=\textbullet}
\setlist[itemize,2]{label=---}
\setlist[itemize,3]{label=*}
\setlist[enumerate]{topsep=0pt,partopsep=0pt,itemsep=0pt,parsep=0pt}
\setlist[enumerate,1]{label=(\roman*)}
\setlist[enumerate,2]{label=\alph*}
\setlist[enumerate,3]{label=\arabic*}
\renewcommand{\tilde}{\widetilde}
{\begin{adjustwidth}{2em}{}\hspace{-2em}\textbf{Case #1.} }%
	{\end{adjustwidth}}
\colorlet{myGreen}{green!50!black}
\colorlet{myLightgreen}{green}
\colorlet{myRed}{red!90!black}
\definecolor{myBlue}{rgb}{0.25, 0.0, 1.0}
\definecolor{myLightBlue}{rgb}{0.39, 0.58, 0.93}
\colorlet{myViolet}{myBlue!55!myRed}
\definecolor{myOrange}{rgb}{1.0, 0.66, 0.07}
\newcommand{\appref}[1]{\ifshort{}[{\hyperref[proof:#1]{\appsymb}}]\fi{}}
\newif\ifcomment
\title{The Tight Cut Decomposition of Matching Covered \Uniformable Hypergraphs}
\date{}
\DeclareRobustCommand{\authorthing}{
	\begin{center}
		\begin{tabular}{p{0.25\textwidth}p{.18\textwidth}p{.30\textwidth}}
			Isabel Beckenbach & Meike Hatzel & Sebastian Wiederrecht\\
			ZIB & TU Berlin & TU Berlin \\
			\emph{beckenbach@zib.de} & \emph{meike.hatzel@tu-berlin.de} & \emph{sebastian.wiederrecht@tu-berlin.de}
		\end{tabular}
\end{center}}
\author{\authorthing}
\begin{document}
	
	\maketitle
	
	\begin{abstract}
		The perfect matching polytope, i.e.\@ the convex hull of (incidence vectors of) perfect matchings of a graph is used in many combinatorial algorithms.
		Kotzig, Lov{\'a}sz and Plummer developed a decomposition theory for graphs with perfect matchings and their corresponding polytopes known as the \emph{tight cut decomposition} which breaks down every graph into a number of indecomposable graphs, so called \emph{bricks}.
		For many properties that are of interest on graphs with perfect matchings, including the description of the perfect matching polytope, it suffices to consider these bricks.
		A key result by Lov{\'a}sz on the tight cut decomposition is that the list of bricks obtained is the same independent of the choice of tight cuts made during the tight cut decomposition procedure.
		This implies that finding a tight cut decomposition is polynomial time equivalent to finding a single tight cut.
		
		We generalise the notions of a tight cut, a tight cut contraction and a tight cut decomposition to hypergraphs.
		By providing an example, we show that the outcome of the tight cut decomposition on general hypergraphs is no longer unique.
		However, we are able to prove that the uniqueness of the tight cut decomposition is preserved on a slight generalisation of uniform hypergraphs.
		Moreover, we show how the tight cut decomposition leads to a decomposition of the perfect matching polytope of \uniformable hypergraphs.

	\end{abstract}
	

\newpage

\section{Introduction}


A \emph{perfect matching} of a graph $G$ is a subset $M\subseteq\Fkt{E}{G}$ of the edges of $G$,  such that for every vertex $v\in\Fkt{V}{G}$ there is exactly one edge in $M$ that contains $v$.
Similarly, for a hypergraph $H$ a perfect matching is a subset $M\subseteq\Fkt{E}{H}$ of the (hyper)edges of $H$, such that for every vertex $v\in\Fkt{V}{H}$ there is exactly one (hyper)edge in $M$ that contains $v$.

The structure of graphs with perfect matchings is a well established and active field of research (see for example \cite{de2014tight,wang2018core,lucchesi2017two}) and a main tool for the investigation of those graphs is a decomposition theory based on \emph{tight cuts}.

A \emph{cut} in a graph $G$ is a set of edges $C$ such that there exists a two-partition of $\Fkt{V}{G}$ and $C$ is exactly the set of edges with endpoints in both partitions, which are called the \emph{shores} of $C$.
Kotzig, Lov{\'a}sz and Plummer discovered that special kind of cuts, called \emph{tight cuts}, can be used to decompose the graph $G$ into smaller graphs while maintaining important information on the perfect matching polytope of the original graph (see \cite{lovasz2009matching}).
It is possible to decompose the smaller graphs obtained from a graph $G$ by applying a \emph{tight cut contraction}, i.e.\ identifying one of the shores of the tight cut as a single vertex and deleting multiple edges, even further until no tight cut can be found and thus a decomposition of $G$ into no longer decomposable graphs is obtained.
These graphs are called \emph{bricks}, or \emph{bricks and braces} if one wishes to distinguish between the non-bipartite and the bipartite case. 
This procedure is known as the \emph{tight cut decomposition procedure} and, by a result due to Lov{\'a}sz (see \cite{lovasz1987}), the obtained set of indecomposable graphs is unique, no matter how the tight cuts are chosen in the graph $G$.

When interested in the structure of perfect matchings and the perfect matching polytope, it usually suffices to consider graphs that are connected and where every edge is contained in at least one perfect matching.
These are the \emph{matching covered} graphs.
Many structural properties of matching covered graphs are preserved by tight cut contractions and thus it suffices to consider the bricks and braces in these contexts.
A famous example for this is the problem of Pfaffian graphs (see \cite{mccuaig2004polya} for a comprehensive overview on the topic).

While graphs with perfect matchings are well described by the theorems of Hall (see \cite{lovasz2009matching}) and Tutte (see \cite{tutte1947factorization}), where the latter is a generalisation of the first, the (perfect) matching problem for hypergraphs is one of the the classical \NP-complete problems of Karp (see \cite{karp1972reducibility}).
Still, a better understanding of the (perfect) matching structure of hypergraphs might yield better insight and new techniques for both practical and theoretical applications.

\paragraph*{Contribution}

In this work we study the structure of hypergraphs with perfect matchings.
A hypergraph $H$ is called \emph{matching covered} if it is connected and for every edge $e\in\Fkt{E}{H}$ there is a perfect matching of $H$ containing $e$.
We consider a class of hypergraphs that slightly generalises $r$-uniform hypergraphs: the \emph{uniformable} hypergraphs.
Loosely speaking, a hypergraph is \uniformable if it can be made $r$-uniform for some $r$ by using Berge's concept of vertex multiplication (a formal definition is given in \cref{subsec:preliminaries}).
\Uniformable hypergraphs naturally occur in the process of tight cut contractions where the shore of a tight cut is replaced by a single vertex.
While this operation does not preserve uniformity, if $H$ has a non-trivial tight cut and is \uniformable, then so are its two tight cut contractions (see \cref{cor:preserveuniformable}).

The main result of this work is the uniqueness of the tight cut decomposition for matching covered, \uniformable hypergraphs, which is a direct generalisation of Lov{\'a}sz's famous theorem.
A matching covered, \uniformable hypergraph is called a \emph{hyperbrick}, if it does not have a non-trivial tight cut.

\setcounter{section}{2}
\setcounter{theorem}{8}
\begin{theorem}
	Any two tight cut decomposition procedures of a matching covered, \uniformable hypergraph yield the same list of hyperbricks up to parallel edges.
\end{theorem}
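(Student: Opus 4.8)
The plan is to follow the architecture of Lovász's original confluence argument for graphs, carried out by induction on $|V(H)|$ and with the hypothesis of uniformability supplying the one structural property that graphs enjoy for free. The base case is the one in which $H$ is already a hyperbrick: it has no non-trivial tight cut, so every procedure terminates immediately with the single-element list, and there is nothing to reconcile. For the inductive step I would compare the two procedures at their very first step, say a tight cut $C_1$ with shore $X$ and a tight cut $C_2$ with shore $Y$. If $C_1 = C_2$ both procedures descend into the same pair of contractions, each of which has strictly fewer vertices (a non-trivial cut has both shores of size at least two) and, by \cref{cor:preserveuniformable}, is again matching covered and uniformable, so the induction hypothesis applies to the two pieces and closes the case. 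The whole difficulty is therefore concentrated in the situation of two genuinely different first cuts.

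The engine I would build is a local confluence statement: if $H$ admits two distinct non-trivial tight cuts $C_1, C_2$, then the results of starting with $C_1$ and of starting with $C_2$ can both be refined to one common decomposition of $H$ into smaller uniformable pieces, to which the induction hypothesis then applies uniformly. Concretely, I want to exhibit a single list of hypergraphs, each on fewer than $|V(H)|$ vertices, that is reachable by continuing either procedure; since the induction hypothesis assigns each such piece a well-defined list of hyperbricks up to parallel edges, both original procedures are forced to produce the same final list. Establishing this common refinement splits into two cases according to whether the shores $X$ and $Y$ are laminar or cross.

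In the laminar case (where $X,Y$ are nested or have disjoint complements) the contractions essentially commute. The routine but essential bookkeeping step is to check that the image of $C_2$ after contracting $C_1$ is again a non-trivial tight cut of the appropriate piece, and symmetrically; this rests on the defining property that every perfect matching meets a tight cut in exactly one edge, together with the way perfect matchings of a contraction lift to perfect matchings of $H$. Granting that, performing the two contractions in either order yields the same three- or four-piece decomposition of $H$ modulo parallel edges, which is precisely the common refinement we need.

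The crossing case is where I expect the real obstacle, and where uniformability must do its work, since the paper already exhibits a general hypergraph on which uniqueness fails. Here I would prove an uncrossing lemma: for a matching covered uniformable hypergraph, if the shores $X,Y$ of tight cuts $C_1,C_2$ cross, then the cuts induced by $X\cap Y$ and by $X\cup Y$ are again tight cuts, and decomposing along the crossing pair is equivalent, up to parallel edges, to decomposing along this laminar pair, thereby reducing the crossing case to the previous one. The heart of the matter, and the step I expect to be hardest, is controlling the hyperedges that meet three or four of the regions $X\cap Y$, $X\setminus Y$, $Y\setminus X$ and $V(H)\setminus(X\cup Y)$: such ``spreading'' edges have no analogue in graphs and are exactly what defeats naive uncrossing. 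The strategy would be to pass to the uniform hypergraph obtained by Berge's vertex multiplication, where a counting argument on how many edges of each cut a perfect matching can use becomes available, and then to transfer tightness of the uncrossed cuts back through the vertex multiplication. Making this transfer faithful, and in particular pinpointing uniformability as precisely the hypothesis that prevents the troublesome spreading edges from sabotaging tightness, is the crux on which the entire theorem turns.
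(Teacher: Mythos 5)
Your overall architecture---induction on $\Abs{\Fkt{V}{H}}$, the laminar case handled by commuting the two contractions, the crossing case handled by uncrossing, with uniformability doing its work precisely there---is the same as the paper's, and you correctly identify the uncrossing step as the crux. But the crossing case as you describe it has a genuine gap beyond a small misstatement. The misstatement first: it is \emph{not} true that for crossing tight shores $S,T$ both $\Cut{H}{S\cap T}$ and $\Cut{H}{S\cup T}$ are tight; what holds (\cref{cor:uncrossing2}) is a dichotomy---either $\Cut{H}{S\cap T}$ and $\Cut{H}{S\cup T}$ are tight, or $\Cut{H}{S\cap \Compl{T}}$ and $\Cut{H}{\Compl{S}\cap T}$ are---so one must first replace $T$ by $\Compl{T}$ when needed. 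That is repairable. The real gap is the degenerate subcase in which the uncrossed sets are trivial, i.e.\ $\Abs{S\cap T}=1=\Abs{\Compl{S}\cap\Compl{T}}$. Then uncrossing yields no non-trivial tight cut laminar to both $S$ and $T$, there is no common refinement to pass to, and your reduction of the crossing case to the laminar case simply does not apply. The paper needs a separate and fairly long argument here (\cref{lemma:separationcut}): an explicit bijection showing $H_S$ is isomorphic to $H_{\Compl{T}}$ and $H_{\Compl{S}}$ to $H_T$ up to parallel edges; and, when the two laminar families contain more than one cut each, an additional congruence computation with the multiplicity function (via \cref{obs:parity}) to produce a non-trivial tight cut laminar to everything. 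Without this piece the induction does not close, and it is not a routine verification---it is where a large part of the work in the paper's proof sits.

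A second, smaller omission is specific to hypergraphs: two crossing shores $S\neq T$ with $S\neq\Compl{T}$ can define the \emph{same} cut $\Cut{H}{S}=\Cut{H}{T}$, which is impossible in graphs. Your trichotomy ``equal cuts / laminar shores / crossing shores'' silently assumes that equal cuts come from equal or complementary shores; when they do not, neither your ``identical first step'' case nor your uncrossing machinery applies directly, and the paper handles this as a separate case (reducing either to the laminar case or to a four-vertex hypergraph checked by hand). You should add this case, or work with shores rather than cuts throughout and prove that distinct non-complementary shores defining the same cut still lead to equivalent contractions.
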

We  provide an example that shows the tight cut decomposition of non-\uniformable matching covered hypergraphs is in general not unique.

In \cref{sec:polytope} we show that the tight cut decomposition of a matching covered hypergraph also induces a decomposition of its matching polytope. Furthermore, we show that balanced \uniformable hypergraphs are closed under our hypergraphic version of tight cut contractions.
\setcounter{section}{3}
\setcounter{theorem}{7}
\begin{theorem}
	Let $H$ be a balanced \uniformable hypergraph and $S\subseteq \Fkt{V}{H}$ such that $\Cut{H}{S}$ is a non-trivial tight cut.
	Then the two contractions $H_S$ and $H_{\Compl{S}}$ are balanced.
\end{theorem}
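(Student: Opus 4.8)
The plan is to pass to the $r$-uniform world, extract a parity invariant from tightness, and read the contraction off as a controlled modification of the incidence matrix. Throughout I use the characterisation of balancedness by the absence of \emph{odd holes}: a hypergraph is balanced precisely when its incidence matrix contains no odd-order square submatrix with exactly two ones in every row and every column. Since $H$ is \uniformable, unfold the definition to obtain vertex multiplicities $m\colon \Fkt{V}{H}\to\mathbb{Z}_{>0}$ with $\sum_{v\in e}m(v)=r$ for every $e\in\Fkt{E}{H}$, and let $H'$ be the $r$-uniform hypergraph in which each $v$ is replaced by $m(v)$ copies. Multiplying a vertex duplicates the corresponding row of the incidence matrix, and row duplication neither creates nor destroys an odd hole, so $H$ is balanced if and only if $H'$ is. By \cref{cor:preserveuniformable} the contraction $H_{\Compl{S}}$ is again \uniformable, so it suffices to show that its $r$-uniform model is balanced; exchanging the roles of $S$ and $\Compl{S}$ then settles $H_S$ as well.

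The engine is the following uniformity of the crossing edges, which is exactly where tightness enters. Put $W=\sum_{v\in\Compl{S}}m(v)$, fix any cut edge $f$, and choose a perfect matching $M$ of $H$ containing $f$; such $M$ exists because $H$ is matching covered, and since $\Cut{H}{S}$ is tight, $f$ is the \emph{unique} edge of $M$ meeting both shores. Counting the total multiplicity of $\Compl{S}$ along $M$, each edge of $M$ lying inside $\Compl{S}$ contributes $r$ while $f$ contributes $w(f):=\sum_{v\in f\cap\Compl{S}}m(v)$, and as $M$ covers every vertex once this gives $W\equiv w(f)\pmod r$. Because a cut edge meets both shores we have $1\le w(f)\le r-1$, so all cut edges share a single crossing multiplicity $\rho\in\{1,\dots,r-1\}$. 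In the $r$-uniform model this says every cut edge has exactly $\rho$ copies in $\Compl{S}$ and $r-\rho$ copies in $S$, and it is the multiplicity $r-\rho$ that the contracted vertex $\hat{t}$ of $H_{\Compl{S}}$ receives.

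Next I would write the incidence matrix $N$ of the $r$-uniform model of $H_{\Compl{S}}$ in block form. Its rows split into the copies of $\Compl{S}$ and the $r-\rho$ copies of $\hat t$, and its columns into the edges internal to $\Compl{S}$ and the contracted cut edges; edges internal to $S$ become loops at $\hat t$ and are discarded. Writing the internal and cut columns as blocks, this yields $N=\begin{pmatrix} A & B \\ 0 & J\end{pmatrix}$, where $[\,A\mid B\,]$ is the restriction of the incidence matrix of $H'$ to the rows $\Compl{S}$ (hence balanced, being a submatrix of a balanced matrix) and $J$ is all-ones, recording that every $\hat t$-copy lies in every contracted cut edge. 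The $\hat t$-rows are identical, so they may be reduced to a single row $[\,0\mid\mathbf{1}\,]$ without affecting balancedness, and the whole problem reduces to the lemma: appending to the balanced matrix $[\,A\mid B\,]$ one row that is the indicator of the cut columns again gives a balanced matrix.

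I expect this appended-row lemma to be the main obstacle. An odd hole of $N$ using the new row reads off two contracted cut edges $f_1,f_2$ through $\hat t$, together with an alternating structure inside $\Compl{S}$ joining them; equivalently it is an odd cycle of the contraction, and to contradict balancedness of $H'$ one must re-route it through the contracted shore, replacing $\hat t$ by a chordless path inside $S$ from $f_1\cap S$ to $f_2\cap S$ of the parity that keeps the cycle odd and keeps every edge meeting it in exactly two vertices. In the graph case the same count is already decisive: there $\rho=1$ and the \emph{signed} version of the count fixes the crossing endpoints to a single side of the (unique) bipartition, so $\hat t$ is colourable and no odd cycle is created at all. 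For hypergraphs a single bicolouring no longer certifies balancedness, so the delicate point is to upgrade the crossing-multiplicity invariant $\rho$ to the ``coloured'' invariant controlling the parity of the spliced path, using balancedness of $H'$ and the connectivity of the $S$-side coming from $H$ being matching covered. Once this parity bookkeeping is carried out, $H_{\Compl{S}}$ is balanced, and the argument for $H_S$ is identical after exchanging $S$ and $\Compl{S}$.
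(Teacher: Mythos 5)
Your approach is genuinely different from the paper's and, as written, it is not complete. The reduction to the $r$-uniform model, the constancy of the crossing multiplicity $\rho$ (which is essentially the first part of \cref{obs:tightcut}), and the block description of the incidence matrix of the contraction are all fine. But the entire content of the theorem is concentrated in what you call the ``appended-row lemma'', and you do not prove it: you only describe the obstacle (an odd hole through the contracted vertex must be re-routed as a strong odd cycle of the uniform model $H'$ through the contracted shore) and assert that ``once this parity bookkeeping is carried out'' the result follows. That bookkeeping is exactly the hard part. Note that the lemma cannot be a statement about balanced matrices alone --- appending an arbitrary $0$--$1$ row to a balanced matrix destroys balancedness in general --- so tightness must enter beyond the mere constancy of $\rho$, and your sketch does not show where. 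Concretely, the splicing step faces two unresolved problems: first, you need a walk inside the shore $S$ joining $f_1\cap S$ to $f_2\cap S$ whose length has a prescribed parity, and neither matching-coveredness nor balancedness of $H'$ obviously supplies one (it is not even immediate that the subhypergraph induced on $S$ is connected); second, even given such a walk, the resulting cycle must be \emph{strong}, and the cut edges $f_1,f_2$, as well as the edges of the spliced path, may contain further non-consecutive vertices of the cycle, which would break strongness. Neither point is addressed, so the proof has a genuine gap at its central step.

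The paper avoids this combinatorial splicing entirely. It first proves a polyhedral characterisation (\cref{lemma:balancedpoly}): a hypergraph is balanced if and only if every restricted subhypergraph has an integral fractional perfect matching polytope. The proof of \cref{thm:balancedtightcutclosed} then takes an arbitrary fractional point $x$ of a restricted subhypergraph of the contraction, lifts it to a fractional point $z$ of a restricted subhypergraph of $H$ by padding with perfect matchings through the cut edges (tightness and \cref{obs:tightcut} are used precisely to show the contracted vertex ends up covered exactly once), decomposes $z$ into perfect matchings using balancedness of $H$, and projects back. If you want to salvage your route you must actually prove the appended-row lemma; I would expect that to require an argument at least as involved as the polytope one, and the polytope machinery (\cref{prop:decomposepolytope} and \cref{lemma:balancedpoly}) is already available in the paper.
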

\setcounter{section}{1}
\setcounter{theorem}{1}

The number of hyperbricks of a matching covered, \uniformable hypergraph is linearly bounded by the number of its vertices.
Moreover, every tight cut decomposition corresponds to a laminar family of tight cuts (see \cref{cor:decompandlaminarfamily}).
Picking a single tight cut from such a family yields two tight cut contractions and these, in turn, contain the remaining tight cuts of the family (see \cref{lemma:tightcut}).
Hence, this result implies that the two problems of finding a non-trivial tight cut in a matching covered, \uniformable hypergraph and finding its tight cut decomposition are polynomial time equivalent. In \cref{sec:algo} we give a polynomial time algorithm finds a non-trivial tight cut in a uniformable and balanced hypergraph if any exists.

\subsection{Preliminaries}\label{subsec:preliminaries}


We start by introducing some definitions and notation.
For a more thorough introduction to the topic we recommend the book on hypergraphs by Berge \cite{bergebook}.

A \emph{hypergraph} $H$ consists of a finite set of vertices and a family of non-empty subsets of vertices 
called edges.
We denote the set of vertices of a hypergraph by $\Fkt{V}{H}$ and its family of edges by 
$\Fkt{E}{H}$.
We call two edges \emph{parallel} if they contain the same set of vertices.

For $r\in\N$ we call a hypergraph \emph{$r$-uniform} if $\Abs{e}=r$ for all edges 
$e\in\Fkt{E}{H}$.

The degree of a vertex $v\in\Fkt{V}{H}$ is the number $\Fkt{d}{v}$ of edges $e\in\Fkt{E}{H}$ with $v\in e$.

Given a subset $S$ of the vertex set there are two possibilities to restrict the edges of $H$ using 
$S$.
First, we can restrict $\Fkt{E}{H}$ by just considering edges lying completely in $S$.
We call the  resulting  hypergraph the \emph{subhypergraph induced by $S$} and denote it by $\PartHG{H}{S}$.
This hypergraph has vertex set $S$ and contains all edges $e\in \Fkt{E}{H}$ with $e\subseteq S$.
If $H'\subseteq H$ is induced by some set $S\subseteq\Fkt{V}{H}$, $H'$ is called an \emph{induced subhypergraph}.

Another possibility is to restrict every edge to $S$ and take the non-empty elements as the new edges.
We denote the hypergraph on $S$ with edges $e\cap S$ for all $e\in \Fkt{E}{H}$ with $e\cap S\neq \emptyset$ by $\SubHG{H}{S}$ and call it the \emph{subhypergraph of $H$ restricted to $S$}.
If $H'\subseteq H$ is restricted to some set $S\subseteq\Fkt{V}{H}$, $H'$ is called a \emph{restricted subhypergraph}.

Similarly, we can define a restriction of the vertex and edge set of $H$ by a set of edges $F\subseteq\Fkt{E}{H}$.
The hypergraph with vertex set $\bigcup F$ and edge set $F$ is called the \emph{partial hypergraph induced by} $F$.
If $H'\subseteq H$ is induced by some set $F\subseteq\Fkt{E}{H}$ of edges, it is called a \emph{partial hypergraph}.

\begin{definition}[Matchings/Perfect Matchings]
	A \emph{matching} in a hypergraph $H$ is a set $M\subseteq\Fkt{E}{H}$ of pairwise disjoint edges and we denote the maximum size of a matching in $H$ by $\MatNum{H}$.
	
	Let $M\subseteq\Fkt{E}{H}$ be a matching of $H$.
	A vertex $x\in\Fkt{V}{H}$ is \emph{covered} by $M$, if there is some edge $e\in M$ that contains $x$.
	Similarly, $x$ is called \emph{exposed} by $M$, if $x$ is not covered by $M$.
	
	A \emph{vertex cover} in $H$ is a set of vertices $T\subseteq\Fkt{E}{H}$ such that every edge of $H$ contains at least one vertex of $T$ and we denote the minimum size of a vertex cover in $H$ by $\VCNum{H}$.
	
	An \emph{edge cover} in $H$ is a set of edges $R\subseteq\Fkt{E}{H}$ such that every vertex of $H$ is contained in an edge of $R$ and we denote the minimum size of an edge cover in $H$ by $\ECNum{H}$.
	
	A maximum matching that is also an edge cover is called \emph{perfect}.
\end{definition}

For $r$-uniform hypergraphs, many edge related parameters coincide.
All maximum cardinality matchings cover the same number of vertices, every perfect matching is also of maximum cardinality, and every perfect matching is a minimum edge cover and vice versa.
While this is not true for all hypergraphs, there is a class that generalises uniform hypergraphs and still has the above mentioned properties.
To describe this class we use the concept of \emph{vertex multiplication} given by Berge \cite{bergebook}.

\begin{definition}
	\emph{Multiplying a vertex $v$} in a hypergraph~$H$ by some natural number~${k\in \N}$ results in the hypergraph with vertex set $\Brace{\Fkt{V}{H} \setminus \Set{v}} \cup \Set{v_1,\ldots, v_k}$ and edge set $$\CondSet{e\in E}{v\notin e}\ \cup \CondSet{e\setminus \Set{v}\cup \Set{v_1,\ldots, v_k}}{e\in E,~v\in e}.$$
	
	Given a function $m : \Fkt{V}{H} \to \Z_{\geq 1}$, the hypergraph obtained from multiplying each vertex $v$ by $\Fkt{m}{v}$ is denoted by $\Multi{H}{m}$.
	We call $\Multi{H}{m}$ a \emph{multiplication} of $H$.
	For $S \subseteq \V{H}$ we introduce the shorthand $\Fkt{m}{S} \coloneqq \sum_{v \in S} \Fkt{m}{v}$.
\end{definition}

Loosely speaking, multiplying a vertex $v$ by $k$ means replacing $v$ by $k$ indistinguishable copies.
Berge also considered the multiplication of a vertex by zero which means that we remove this vertex from all edges containing it.
However, we restrict ourselves to multiplications by positive integers because there is a one-to-one correspondence between the set of matchings in a hypergraph~$H$ and its multiplication~$\Multi{H}{m}$ for functions~$m \geq 1$. Similar, the set of perfect matching in $H$ corresponds one-to-one to the set of perfect matchings in $\Multi{H}{m}$ for $m\geq 1$.

Now, we call a hypergraph~$H$ \emph{\uniformable} if it has a uniform multiplication, i.e.\@ there exists a function $m : \Fkt{V}{H}\rightarrow \Z_{\geq 1}$ and an integer $r \in \Z$ such that $\sum_{v \in e}\Fkt{m}{v} = r$ for all $e\in \Fkt{E}{H}$.

We can decide in polynomial time whether a hypergraph $H$ is \uniformable or not. Namely, $H$ is \uniformable if and only if the system
\begin{align}
	\sum_{v\in e}m(v)=r &\ \forall e\in E(H) \label{unieq1}\\
	m(v)\geq 1 &\ \forall v\in V(H) \label{unieq2}
\end{align}
has an integral solution. This is the case if and only if it has a fractional solution $m\in \mathds Q^{V(H)}$, $r\in \mathds Q$ because $d\cdot m$, $d\cdot r$ is an integral solution to \cref{unieq1}-\cref{unieq2} if $d\in \mathds Z$ is chosen such that $d\cdot m(v)$ is integral for all $v\in V(H)$. Deciding whether a system of linear equations and inequalities has a fractional solution can be done in polynomial time.

The class of \uniformable hypergraphs is a natural extension of uniform hypergraphs in the matching setting.
First, this is because of the one-to-one correspondence of perfect matchings as mentioned above and, second, the tight cut contraction which is defined in the next subsection, shrinks the shore of a tight cut to a single vertex and thus does not preserve uniformity.
However, as \cref{cor:preserveuniformable} shows, uniformability is preserved.

We are especially interested in a more restricted class of \uniformable hypergraphs: those where every edge lies in a perfect matching.

\begin{definition}
	A hypergraph $H$ is called \emph{matching covered}, if it is connected, $\Abs{\Fkt{E}{H}}\geq 1$, and every edge of $H$ is contained in a perfect matching.
\end{definition}

The hypergraphs we consider in this paper will always be connected and have at least one perfect matching. As edges not contained in any perfect matching do not play a role in the structure of perfect matchings of a hypergraph, we assume throughout the paper that all hypergraphs are matching covered.

\subsection{Tight Cuts and their Contractions}

Our goal is the generalisation of tight cuts to the hypergraphic setting.
This subsection is dedicated to the new definitions and includes some basic results on tight cut contractions including the preservation of uniformability.

\begin{definition}
	Let $H$ be a matching covered, \uniformable hypergraph, and $S\subseteq \Fkt{V}{H}$.
	The cut $\Cut{H}{S}$ is tight if $\Abs{M\cap \Cut{H}{S}}=1$ for all perfect matchings $M$ of $H$.
	A cut $\Cut{H}{S}$ is \emph{trivial} if $\Abs{S}=1$ or $\Abs{\Compl{S}} = 1$, otherwise it is called \emph{non-trivial}.
	Here, $\Compl{S}$ denotes the complement set to $S$, namely $\V{H}\setminus S$.
\end{definition}

We need the following simple observations concerning cuts and tight cuts in \uniformable hypergraphs.
Both are straightforward generalisations of the graph case.

\begin{proposition}\label{obs:tightcut}
	Let $H$ be a matching covered, \uniformable hypergraph with a function $m:\Fkt{V}{H}\to \Z_{\geq 1}$ such that $\Multi{H}{m}$ is $r$-uniform for some $r\in \Z$.
	\begin{enumerate}
		\item If $\Cut{H}{S}$ is a tight cut, then $\sum_{v\in e\cap S} \Fkt{m}{v} = k$ for all $e\in \Cut{H}{S}$, where $k$ is the unique integer with $k\in \Set{1,\ldots, r-1}$ and $k\equiv_r \sum_{v\in S}\Fkt{m}{v}$.
		
		\item If $S\subseteq \Fkt{V}{H}$ with $\sum_{v\in S}\Fkt{m}{v} \equiv_r 0$, then $\Abs{M\cap \Cut{H}{S}} \geq 2$ or $\Abs{M\cap \Cut{H}{S}} = 0$ for all perfect matchings $M$ of $H$.
			If additionally $S\notin \Set{\emptyset, \Fkt{V}{H}}$, then there exists a perfect matching $M$ with $\Abs{M\cap \Cut{H}{S}} \geq 2$.
	\end{enumerate}
\end{proposition}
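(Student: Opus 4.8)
The plan is to reduce both statements to a single weighted counting identity for perfect matchings, together with the fact that every edge of the $r$-uniform multiplication $\Multi{H}{m}$ carries total weight exactly $r$. First I would establish the identity: for any perfect matching $M$ of $H$ and any $S\subseteq\Fkt{V}{H}$,
$$\sum_{e\in M}\Fkt{m}{e\cap S}=\Fkt{m}{S},$$
which holds because $M$ covers every vertex of $S$ exactly once, so summing $\Fkt{m}{v}$ over the unique edge covering each $v\in S$ recovers $\Fkt{m}{S}$. Next I would classify the contribution $\Fkt{m}{e\cap S}$ of each edge $e\in M$: if $e\subseteq S$ then $\Fkt{m}{e\cap S}=\Fkt{m}{e}=r$ by $r$-uniformity of $\Multi{H}{m}$; if $e\subseteq\Compl{S}$ then $\Fkt{m}{e\cap S}=0$; and if $e\in\Cut{H}{S}$ then both $e\cap S$ and $e\cap\Compl{S}$ are non-empty, so using $\Fkt{m}{v}\geq 1$ we get $\Fkt{m}{e\cap S}\in\Set{1,\ldots,r-1}$. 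Reducing the identity modulo $r$ therefore yields $\Fkt{m}{S}\equiv_r\sum_{e\in M\cap\Cut{H}{S}}\Fkt{m}{e\cap S}$, with every summand on the right lying strictly between $0$ and $r$.

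For the first statement, tightness of $\Cut{H}{S}$ means each perfect matching has exactly one crossing edge $e_0$, so the congruence reads $\Fkt{m}{S}\equiv_r\Fkt{m}{e_0\cap S}$ with right-hand side in $\Set{1,\ldots,r-1}$; hence this value is forced to equal the unique such representative $k$. To upgrade this from "some edge" to "every edge of the cut", I would invoke that $H$ is matching covered: any $e\in\Cut{H}{S}$ lies in a perfect matching $M$, and tightness forces $e$ to be the unique crossing edge of $M$, whence $\Fkt{m}{e\cap S}=k$.

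For the second statement, if $\Fkt{m}{S}\equiv_r 0$ then a perfect matching with a single crossing edge would give $0\equiv_r\Fkt{m}{e_0\cap S}$ with the right-hand side in $\Set{1,\ldots,r-1}$, a contradiction; thus $\Abs{M\cap\Cut{H}{S}}$ is never $1$, i.e.\ it is $0$ or at least $2$. For the existence part, when $\emptyset\neq S\neq\Fkt{V}{H}$, connectivity of $H$ produces an edge $e\in\Cut{H}{S}$ (a path joining $S$ to $\Compl{S}$ must contain a crossing edge), and extending $e$ to a perfect matching $M$ using that $H$ is matching covered gives $\Abs{M\cap\Cut{H}{S}}\geq 1$, which by the previous sentence cannot equal $1$ and is therefore at least $2$.

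The argument is largely routine, and the main conceptual step is simply recognising that the weighted identity together with $r$-uniformity converts the combinatorial cut condition into a modular statement. The only points genuinely needing care are that a crossing edge contributes strictly between $0$ and $r$ to the $S$-side, which is exactly where the hypothesis $\Fkt{m}{v}\geq 1$ enters, and the connectivity argument guaranteeing a crossing edge for the existence claim.
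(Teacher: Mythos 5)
Your proof is correct and follows essentially the same route as the paper: both arguments amount to double-counting $\Fkt{m}{\cdot}$ over the edges of a perfect matching, using $\Fkt{m}{e}=r$ and $\Fkt{m}{v}\geq 1$ to force each crossing edge to contribute a value in $\Set{1,\ldots,r-1}$ modulo $r$, and invoking matching coveredness plus connectivity exactly where the paper does. The only cosmetic difference is that you sum over all of $M$ and classify contributions, while the paper restricts to the submatching of edges inside $S$ and computes its total weight in two ways.
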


\begin{proof}
	For the first observation let $\Cut{H}{S}$ be a tight cut, $e'\in \Cut{H}{S}$ arbitrary, and $M$ a perfect matching containing $e'$.
	The set $M_S \coloneqq\CondSet{e\subseteq S}{e\in M}$ is a matching of $\InducedSubgraph{H}{S}$ covering $S\setminus e'$.
	Summing the function $m$ over all $v\in \Fkt{V}{M_S}$ we get $\sum_{v\in S}\Fkt{m}{v}-\sum_{v\in e'\cap S}\Fkt{m}{v}$.
	On the other hand, $\sum_{v\in \Fkt{V}{M_S}}\Fkt{m}{v}=\sum_{e\in M_S}\sum_{v\in e}\Fkt{m}{v}=\Abs{M_S}\cdot r$.
	Together, we get $\sum_{v\in S}\Fkt{m}{v}\equiv_r \sum_{v\in e'\cap S}\Fkt{m}{v}$.
	As $\sum_{v\in e'\cap S}\Fkt{m}{v}\leq \sum_{v\in e'}\Fkt{m}{v}-1$ the claim follows.
	
	For the second part of the observation, let $S\subseteq \Fkt{V}{H}$.
	If $S=\emptyset$ or $S=\Fkt{V}{H}$, then $\Cut{H}{S}=\emptyset$ and thus $\Abs{M\cap \Cut{H}{S}} = 0$ for all perfect matchings $M$ of $H$.
	In the remainder of the proof we assume that $S\notin \Set{\emptyset, \Fkt{V}{H}}$.
	This assumption together with the fact that $H$ is connected implies that $\Cut{H}{S}\neq \emptyset$.
	The hypergraph $H$ is matching covered which implies that for every $e'\in \Cut{H}{S}$ there exists a perfect matching containing $e'$. Suppose that there exists a perfect matching $M$ intersecting $\Cut{H}{S}$ in exactly one edge~$e'$, then $\sum_{v\in e'\cap S}\Fkt{m}{v}\equiv \sum_{v\in S}\Fkt{m}{v}\equiv_r 0$ follows.
	As $\Fkt{m}{v} \geq 1$ for all $v\in \V{H}$ and $\sum_{v\in e'}\Fkt{m}{v}=r$, we get $e'\cap S=\emptyset$, contradicting $e'\in \Cut{H}{S}$.
	Thus, if $M$ is a perfect matching intersecting $\Cut{H}{S}$, then $\Abs{M\cap \Cut{H}{S}}\geq 2$. 
\end{proof}

Next, we define tight cut contractions.
The basic idea is to replace a shore $S$ of a tight cut by a single vertex $s$ which is included in all edges in $\Cut{H}{S}$ replacing the vertices of $S$.
For an illustration see \cref{fig:thm_tight_cut_contr}.
Note that this procedure possibly introduces parallel edges.

\begin{definition}[tight cut contractions]
	Let $H$ be a hypergraph and $\Cut{H}{S}$ a tight cut of $H$.
	For every $e\in \Cut{H}{S}$ we define two new edges $e_s = \Brace{e\setminus S}\cup \Set{s}$ and $e_{\Compl s} = \Brace{e\cap S}\cup \Set{\Compl{s}}$ where $s,\Compl{s}\notin \Fkt{V}{H}$ are two new vertices representing $S$ and $\Compl{S}$, respectively.
	
	The \emph{tight cut contractions} of $H$ w.r.t.\ $S$ and $\Compl{S}$ are the hypergraphs $H_S$ and $H_{\Compl{S}}$ where 
	$\Fkt{V}{H_S}\coloneqq\Set{s}\cup \Compl{S}$ and $\Fkt{V}{H_{\Compl{S}}} \coloneqq \Set{\Compl{s}}\cup S$ are the vertices, and the edges are given by $\Fkt{E}{H_S}\coloneqq\CondSet{e\in \Fkt{E}{H}}{e\subseteq \Compl{S}}\cup \CondSet{e_s}{e\in \Cut{H}{S}}$ and $\Fkt{E}{H_{\Compl{S}}} \coloneqq \CondSet{e\in \Fkt{E}{H}}{e\subseteq S} \cup \CondSet{e_{\Compl{s}}}{e\in\Cut{H}{S}}$.
\end{definition}

\begin{figure}[t]
	\centering
	\begin{tikzpicture}[scale=0.85]
	\node (M) at (0,0) {};
	\node (AR) at ($(M)+(0:4.5)$) {
		\begin{tikzpicture}[scale=0.65]
		\node (B) at (0,0) {};
		\def\OuterCurve{%
			let \p1=($(B)+(90:3)$),\p2=($(B)+(270:3)$) in (-4,\y1) rectangle (4,\y2)
		}
		\coordinate (Upper) at ($(B)+(90:3)$);
		\coordinate (Lower) at ($(B)+(270:3)$);
		\def\CutCurve{%
			(Upper) to[out=-40, in=100] ($(B)+(87:1)$) to[out=280, in=110] (Lower)
		}
		\draw ($(B)+(90:2)$) edge[Hedge,closed,curve through={($(B)+(135:2.5)$) ($(B)+(180:3)$) ($(B)+(225:2.5)$) ($(B)+(270:1.8)$) ($(B)+(315:2.2)$) ($(B)+(360:2.4)$)}] ($(B)+(45:2.7)$);
		\node (e2) at ($(B)+(130:1.2)$) {};
		\draw ($(e2)+(90:0.8)$) edge[Medge,closed,curve through={($(e2)+(135:0.6)$) ($(e2)+(180:0.4)$) ($(e2)+(225:0.55)$) ($(e2)+(270:0.6)$) ($(e2)+(315:0.7)$) ($(e2)+(360:0.6)$) }] ($(e2)+(45:0.7)$);
		\node (e3) at ($(B)+(165:2)$) {};
		\draw ($(e3)+(90:0.8)$) edge[Medge,closed,curve through={($(e3)+(135:0.6)$) ($(e3)+(180:0.8)$) ($(e3)+(225:0.8)$) ($(e3)+(270:0.4)$) ($(e3)+(315:0.8)$) ($(e3)+(360:0.6)$) }] ($(e3)+(45:0.7)$);
		\node (e4) at ($(B)+(52:1.9)$) {};
		\draw ($(e4)+(90:0.45)$) edge[Medge,closed,curve through={($(e4)+(135:0.6)$) ($(e4)+(180:0.8)$) ($(e4)+(225:0.3)$) ($(e4)+(270:0.5)$) ($(e4)+(315:0.4)$) ($(e4)+(360:0.7)$) }] ($(e4)+(45:0.5)$);
		\node (e5) at ($(B)+(15:1.7)$) {};
		\draw ($(e5)+(90:0.5)$) edge[Medge,closed,curve through={($(e5)+(135:0.4)$) ($(e5)+(180:0.55)$) ($(e5)+(225:0.4)$) ($(e5)+(270:0.5)$) ($(e5)+(315:0.8)$) ($(e5)+(360:0.5)$) }] ($(e5)+(45:0.7)$);
		\node (e6) at ($(B)+(320:1.5)$) {};
		\draw ($(e6)+(90:0.8)$) edge[Medge,closed,curve through={($(e6)+(135:0.3)$) ($(e6)+(180:1)$) ($(e6)+(225:0.8)$) ($(e6)+(270:0.5)$) ($(e6)+(315:0.4)$) ($(e6)+(360:0.5)$) }] ($(e6)+(45:0.7)$);
		\node (e7) at ($(B)+(205:2.1)$) {};
		\draw ($(e7)+(90:0.7)$) edge[Medge,closed,curve through={($(e7)+(135:0.7)$) ($(e7)+(180:0.5)$) ($(e7)+(225:0.5)$) ($(e7)+(270:0.6)$) ($(e7)+(315:1)$) ($(e7)+(360:0.4)$) }] ($(e7)+(45:0.3)$);
		\begin{scope}
		\clip[use Hobby shortcut, closed=true] \OuterCurve;
		\clip \CutCurve
		-- (current bounding box.south |- Lower)
		-- (current bounding box.south west)
		-- (current bounding box.north west)
		-- (current bounding box.north |- Upper)
		-- cycle
		;
		\fill[white] (current bounding box.south west)
		rectangle (current bounding box.north east)
		;
		\end{scope}
		\node[label distance=2mm,label=3:{$e_{s}$}] (e1) at ($(B)+(225:0.3)$) {};
		\draw ($(e1)+(90:0.2)$) edge[Medge,closed,curve through={($(e1)+(135:0.4)$) ($(e1)+(180:1)$) ($(e1)+(225:1.1)$) ($(e1)+(270:0.4)$) ($(e1)+(315:0.7)$) ($(e1)+(360:1)$) ($(e1)+(45:1.5)$)}] ($(e1)+(60:1.6)$);
		\node[scale=0.9] (a1) at ($(B)+(-1.1,0.8)$) {};
		
		\node[scale=0.9,vertex] (a2) at ($(B)+(-0.8,-0.4)$) {};
		\node[opacity=0.8,fill=white,shape=ellipse] at ($(a2)-(0.6,+0.1)$) {$s$};
		\node[scale=0.9,vertex] (a2) at ($(B)+(-0.8,-0.4)$) {};
		
		
		\draw[BostonUniversityRed,thick,use Hobby shortcut, closed=false] \CutCurve;
		\node[CornflowerBlue] (H2) at ($(B)+(335:3)$) {$H_S$};
		\node[DarkTangerine] (M2) at ($(B)+(300:2.8)$) {$M_S$};
		\end{tikzpicture}};
	\node (AL) at ($(M)+(180:4.5)$) {
		\begin{tikzpicture}[scale=0.65]
		\node (B) at (0,0) {};
		\def\OuterCurve{%
			($(B)+(90:3)$) .. ($(B)+(180:3.5)$) ..
			($(B)+(270:3)$) .. ($(B)+(360:3.5)$) .. ($(B)+(90:3)$)
		}
		\coordinate (Upper) at ($(B)+(90:3)$);
		\coordinate (Lower) at ($(B)+(270:3)$);
		\def\CutCurve{%
			(Upper) to[out=-40, in=100] ($(B)+(87:1)$) to[out=280, in=110] (Lower)
		}
		\draw ($(B)+(90:2)$) edge[Hedge,closed,curve through={($(B)+(135:2.5)$) ($(B)+(180:3)$) ($(B)+(225:2.5)$) ($(B)+(270:1.8)$) ($(B)+(315:2.2)$) ($(B)+(360:2.4)$)}] ($(B)+(45:2.7)$);
		\node (e2) at ($(B)+(130:1.2)$) {};
		\draw ($(e2)+(90:0.8)$) edge[Medge,closed,curve through={($(e2)+(135:0.6)$) ($(e2)+(180:0.4)$) ($(e2)+(225:0.55)$) ($(e2)+(270:0.6)$) ($(e2)+(315:0.7)$) ($(e2)+(360:0.6)$) }] ($(e2)+(45:0.7)$);
		\node (e3) at ($(B)+(165:2)$) {};
		\draw ($(e3)+(90:0.8)$) edge[Medge,closed,curve through={($(e3)+(135:0.6)$) ($(e3)+(180:0.8)$) ($(e3)+(225:0.8)$) ($(e3)+(270:0.4)$) ($(e3)+(315:0.8)$) ($(e3)+(360:0.6)$) }] ($(e3)+(45:0.7)$);
		\node (e4) at ($(B)+(52:1.9)$) {};
		\draw ($(e4)+(90:0.45)$) edge[Medge,closed,curve through={($(e4)+(135:0.6)$) ($(e4)+(180:0.8)$) ($(e4)+(225:0.3)$) ($(e4)+(270:0.5)$) ($(e4)+(315:0.4)$) ($(e4)+(360:0.7)$) }] ($(e4)+(45:0.5)$);
		\node (e5) at ($(B)+(15:1.7)$) {};
		\draw ($(e5)+(90:0.5)$) edge[Medge,closed,curve through={($(e5)+(135:0.4)$) ($(e5)+(180:0.55)$) ($(e5)+(225:0.4)$) ($(e5)+(270:0.5)$) ($(e5)+(315:0.8)$) ($(e5)+(360:0.5)$) }] ($(e5)+(45:0.7)$);
		\node (e6) at ($(B)+(320:1.5)$) {};
		\draw ($(e6)+(90:0.8)$) edge[Medge,closed,curve through={($(e6)+(135:0.3)$) ($(e6)+(180:1)$) ($(e6)+(225:0.8)$) ($(e6)+(270:0.5)$) ($(e6)+(315:0.4)$) ($(e6)+(360:0.5)$) }] ($(e6)+(45:0.7)$);
		\node (e7) at ($(B)+(205:2.1)$) {};
		\draw ($(e7)+(90:0.7)$) edge[Medge,closed,curve through={($(e7)+(135:0.7)$) ($(e7)+(180:0.5)$) ($(e7)+(225:0.5)$) ($(e7)+(270:0.6)$) ($(e7)+(315:1)$) ($(e7)+(360:0.4)$) }] ($(e7)+(45:0.3)$);
		\begin{scope}
		\clip[use Hobby shortcut, closed=true] \OuterCurve;
		\clip \CutCurve
		-- (current bounding box.south |- Lower)
		-- (current bounding box.south east)
		-- (current bounding box.north east)
		-- (current bounding box.north |- Upper)
		-- cycle
		;
		\fill[white] (current bounding box.south west)
		rectangle (current bounding box.north east)
		;
		\end{scope}
		\node[label distance=2mm,label=183:{$e_{\Compl{s}}$}] (e1) at ($(B)+(225:0.3)$) {};
		\draw ($(e1)+(90:0.2)$) edge[Medge,closed,curve through={($(e1)+(135:0.4)$) ($(e1)+(180:1)$) ($(e1)+(225:1.1)$) ($(e1)+(270:0.4)$) ($(e1)+(315:0.7)$) ($(e1)+(360:1)$) ($(e1)+(45:1.5)$)}] ($(e1)+(60:1.6)$);
		\node (e2) at ($(B)+(130:1.2)$) {};
		\node[scale=0.9] (ak1) at ($(B)+(0.6,-0.3)$) {};
		\node[opacity=0.8,fill=white,shape=ellipse] at ($(ak1)+(0.4,0)$) {$\Compl{s}$};
		\node[scale=0.9,vertex] at ($(B)+(0.4,-0.3)$) {};
		
		\draw[BostonUniversityRed,thick,use Hobby shortcut, closed=false] \CutCurve;
		\node[CornflowerBlue] (H1) at ($(B)+(290:2.5)$) {$H_{\Compl{S}}$};
		\node[DarkTangerine] (M1) at ($(B)+(240:2.7)$) {$M_{\Compl{S}}$};
		\end{tikzpicture}};
	\node (A) at (M){
		\begin{tikzpicture}[scale=0.65]
		\node (B) at (0,0) {};
		\def\OuterCurve{%
			($(B)+(90:3)$) .. ($(B)+(180:3.5)$) ..
			($(B)+(270:3)$) .. ($(B)+(360:3.5)$) .. ($(B)+(90:3)$)
		}
		\draw ($(B)+(90:2)$) edge[Hedge,closed,curve through={($(B)+(135:2.5)$) ($(B)+(180:3)$) ($(B)+(225:2.5)$) ($(B)+(270:1.8)$) ($(B)+(315:2.2)$) ($(B)+(360:2.4)$)}] ($(B)+(45:2.7)$);
		\coordinate (Upper) at ($(B)+(90:3)$);
		\coordinate (Lower) at ($(B)+(270:3)$);
		\def\CutCurve{%
			(Upper) to[out=-40, in=100] ($(B)+(87:1)$) to[out=280, in=110] (Lower)
		}
		\node[label=west:{$e$}] (e1) at ($(B)+(225:0.3)$) {};
		\draw ($(e1)+(90:0.2)$) edge[Medge,closed,curve through={($(e1)+(135:0.4)$) ($(e1)+(180:1)$) ($(e1)+(225:1.1)$) ($(e1)+(270:0.4)$) ($(e1)+(315:0.7)$) ($(e1)+(360:1)$) ($(e1)+(45:1.5)$)}] ($(e1)+(60:1.6)$);
		\node (e2) at ($(B)+(130:1.2)$) {};
		\draw ($(e2)+(90:0.8)$) edge[Medge,closed,curve through={($(e2)+(135:0.6)$) ($(e2)+(180:0.4)$) ($(e2)+(225:0.55)$) ($(e2)+(270:0.6)$) ($(e2)+(315:0.7)$) ($(e2)+(360:0.6)$) }] ($(e2)+(45:0.7)$);
		\node (e3) at ($(B)+(165:2)$) {};
		\draw ($(e3)+(90:0.8)$) edge[Medge,closed,curve through={($(e3)+(135:0.6)$) ($(e3)+(180:0.8)$) ($(e3)+(225:0.8)$) ($(e3)+(270:0.4)$) ($(e3)+(315:0.8)$) ($(e3)+(360:0.6)$) }] ($(e3)+(45:0.7)$);
		\node (e4) at ($(B)+(52:1.9)$) {};
		\draw ($(e4)+(90:0.45)$) edge[Medge,closed,curve through={($(e4)+(135:0.6)$) ($(e4)+(180:0.8)$) ($(e4)+(225:0.3)$) ($(e4)+(270:0.5)$) ($(e4)+(315:0.4)$) ($(e4)+(360:0.7)$) }] ($(e4)+(45:0.5)$);
		\node (e5) at ($(B)+(15:1.7)$) {};
		\draw ($(e5)+(90:0.5)$) edge[Medge,closed,curve through={($(e5)+(135:0.4)$) ($(e5)+(180:0.55)$) ($(e5)+(225:0.4)$) ($(e5)+(270:0.5)$) ($(e5)+(315:0.8)$) ($(e5)+(360:0.5)$) }] ($(e5)+(45:0.7)$);
		\node (e6) at ($(B)+(320:1.5)$) {};
		\draw ($(e6)+(90:0.8)$) edge[Medge,closed,curve through={($(e6)+(135:0.3)$) ($(e6)+(180:1)$) ($(e6)+(225:0.8)$) ($(e6)+(270:0.5)$) ($(e6)+(315:0.4)$) ($(e6)+(360:0.5)$) }] ($(e6)+(45:0.7)$);
		\node (e7) at ($(B)+(205:2.1)$) {};
		\draw ($(e7)+(90:0.7)$) edge[Medge,closed,curve through={($(e7)+(135:0.7)$) ($(e7)+(180:0.5)$) ($(e7)+(225:0.5)$) ($(e7)+(270:0.6)$) ($(e7)+(315:1)$) ($(e7)+(360:0.4)$) }] ($(e7)+(45:0.3)$);

		\draw[BostonUniversityRed,thick,use Hobby shortcut, closed=false] \CutCurve;
		\node[CornflowerBlue] (H) at ($(B)+(45:3.1)$) {$H$};
		\node[DarkTangerine] (M) at ($(B)+(360:2.9)$) {$M$};
		\node[BostonUniversityRed!80!black,text height=2ex] (S) at ($(B)+(100:2.9)$) {$S$};
		\node[BostonUniversityRed!80!black,text height=2ex] (SC) at ($(S)+(360:1.3)$) {$\Compl{S}$};
		\end{tikzpicture}};
	\node (pA) at (A) {};
	\node (pAL) at (AL) {};
	\node (pAR) at (AR) {};
	\draw[shorten <=2cm,shorten >=1.1cm,->,-latex,thick,dashed] (pA) -- (pAL);
	\draw[shorten <=1.9cm,shorten >=1.2cm,->,-latex,thick,dashed] (pA) -- (pAR);
\end{tikzpicture}
\caption{A \textcolor{DarkTangerine}{perfect matching} in a matching covered, \uniformable hypergraph $H$ and the tight cut contractions $H_S$ and $H_{\Compl{S}}$.}
\label{fig:thm_tight_cut_contr}
\end{figure}
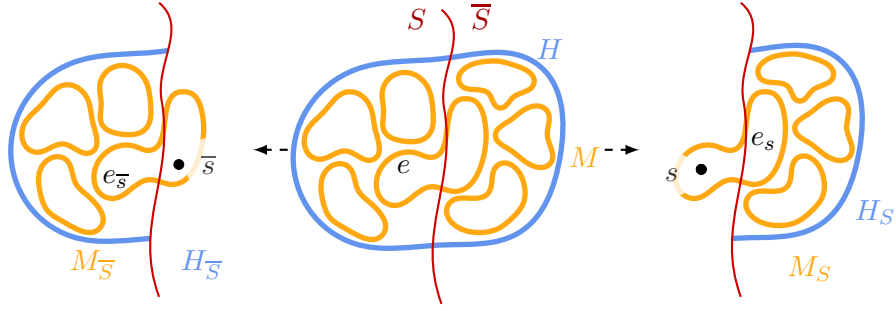

Tight cut contractions are useful because there is a correspondence between perfect matchings of a hypergraph $H$ and pairs of perfect matchings in the tight cut contractions $H_S$ and $H_{\Compl{S}}$ implying that the tight cut contractions of a matching covered hypergraph remain matching covered.

\begin{proposition}\label{prop:matcoveredtight}
	Let $H$ be a hypergraph with a perfect matching and $S \subseteq \Fkt{V}{H}$ be a set of vertices 
	defining a tight cut $\Cut{H}{S}$.
	Then $H$ is matching covered if and only if $H_S$ and $H_{\Compl{S}}$ are matching covered.
\end{proposition}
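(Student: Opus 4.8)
The plan is to prove both directions through a single structural correspondence between perfect matchings of $H$ and \emph{compatible} pairs of perfect matchings of $H_S$ and $H_{\Compl{S}}$. First I would record the two elementary facts that drive everything. Since $\Cut{H}{S}$ is tight, every perfect matching $M$ of $H$ meets $\Cut{H}{S}$ in exactly one edge $e$; writing $M_S \coloneqq \CondSet{f\in M}{f\subseteq\Compl{S}}\cup\Set{e_s}$ and $M_{\Compl{S}}\coloneqq\CondSet{f\in M}{f\subseteq S}\cup\Set{e_{\Compl{s}}}$, one checks from the vertex sets $\Fkt{V}{H_S}=\Set{s}\cup\Compl{S}$ and $\Fkt{V}{H_{\Compl{S}}}=\Set{\Compl{s}}\cup S$ that these are perfect matchings of $H_S$ and $H_{\Compl{S}}$. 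Conversely, in $H_S$ the vertex $s$ lies only in the edges $e_s$ with $e\in\Cut{H}{S}$, so every perfect matching of $H_S$ contains exactly one such edge, and symmetrically for $\Compl{s}$ in $H_{\Compl{S}}$; and if $M_1$ is a perfect matching of $H_S$ whose unique $s$-edge is $e_s$ and $M_2$ one of $H_{\Compl{S}}$ whose unique $\Compl{s}$-edge is $e_{\Compl{s}}$ for the \emph{same} $e\in\Cut{H}{S}$, then $(M_1\setminus\Set{e_s})\cup(M_2\setminus\Set{e_{\Compl{s}}})\cup\Set{e}$ is a perfect matching of $H$, its three parts covering $\Compl{S}\setminus e$, $S\setminus e$ and $e$ pairwise disjointly.

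For the forward direction I would assume $H$ matching covered and show $H_S$ is (the case of $H_{\Compl{S}}$ being symmetric). Connectivity is the easy contraction direction: a walk in $H$ projects to a walk in $H_S$ by replacing each traversed edge inside $\Compl{S}$ by itself, each cut edge by its image $e_s$, and collapsing every excursion inside $S$ to the single vertex $s$. Moreover $\Cut{H}{S}\neq\emptyset$ (a perfect matching of $H$ meets it once), so $H_S$ has an edge. For the edge condition, let $g\in\Fkt{E}{H_S}$: if $g\subseteq\Compl{S}$, matching-coveredness of $H$ gives a perfect matching $M\ni g$, and $M_S$ as above contains $g$; if $g=e_s$ for a cut edge $e$, then a perfect matching $M\ni e$ has $e$ as its unique cut edge, so $M_S\ni e_s=g$.

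For the converse I would assume $H_S$ and $H_{\Compl{S}}$ matching covered and treat each edge $g$ of $H$ by type. If $g\subseteq\Compl{S}$, matching-coveredness of $H_S$ gives a perfect matching $M_1\ni g$ with some $s$-edge $e_s$, matching-coveredness of $H_{\Compl{S}}$ gives a perfect matching containing $e_{\Compl{s}}$ (which is then its unique $\Compl{s}$-edge), and gluing this compatible pair yields a perfect matching of $H$ through $g$; the case $g\subseteq S$ is symmetric; and if $g\in\Cut{H}{S}$ then $g_s$ and $g_{\Compl{s}}$ are automatically compatible, so perfect matchings of $H_S$ and $H_{\Compl{S}}$ through them glue to a perfect matching of $H$ through $g$.

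The main obstacle is connectivity in this converse direction, since it is genuinely false in general that connectivity of both contractions implies connectivity of $H$ — a non-tight cut whose two shores each split into pieces joined only through the cut yields connected contractions from a disconnected $H$. Tightness is exactly what excludes this, so I would argue by contradiction. Suppose $H$ has components $C_1,\dots,C_k$ with $k\geq 2$. Each perfect matching of $H$ is a disjoint union of perfect matchings of the $C_i$ and meets $\Cut{H}{S}$ exactly once, so for every choice of component matchings the total number of cut edges used is $1$; hence at most one component admits a perfect matching using a cut edge. But if some component $C_i$ met both $S$ and $\Compl{S}$ it would, being connected, contain a cut edge $g$, which by the gluing construction above lies in a perfect matching of $H$ and hence in one of $C_i$ using a cut edge. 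Therefore every component except one lies entirely inside $S$ or entirely inside $\Compl{S}$; such a component survives untouched and isolated from $s$ in $H_S$ (if inside $\Compl{S}$) or from $\Compl{s}$ in $H_{\Compl{S}}$ (if inside $S$), contradicting that both contractions are connected. Thus $H$ is connected, which completes the proof.
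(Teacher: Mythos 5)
Your proof is correct and uses the same central device as the paper's: the correspondence between perfect matchings of $H$ and pairs of perfect matchings of $H_S$ and $H_{\Compl{S}}$ that agree on $\Cut{H}{S}$, applied edge type by edge type. The only difference is that you additionally spell out why connectivity of both contractions forces connectivity of $H$ (using tightness to show at most one component can meet the cut), a point the paper's proof leaves implicit; this is a welcome extra detail rather than a different route.
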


\begin{proof}
	We say that a perfect matching $M_S$ of $H_S$ agrees with a perfect matching $M_{\Compl{S}}$ of 
	$H_{\Compl{S}}$ on 
	$\Cut{H}{S}$ if there exists an edge $e\in \Cut{H}{S}$ such that $e_s$ is the unique edge in $M_S\cap \Cut{H_S}{s}$ and $e_{\Compl{s}}$ is the unique edge in $M_{\Compl{S}}\cap \Cut{H_{\Compl{S}}}{\Compl{s}}$.
	In this case, $M\coloneqq \Brace{M_S\cup M_{\Compl{S}}\cup \Set{e}}\setminus \Set{e_s, e_{\Compl{s}}}$ is a matching of $H$ covering $\Fkt{V}{M_S \setminus \Set{e_s}} \cup \Fkt{V}{M_{\Compl{S}} \setminus \Set{e_{\Compl{s}}}} \cup e = \Brace{\Compl{S} \setminus e} \cup \Brace{S \setminus e} \cup e = \Fkt{V}{H}$.

	Reversely, let $M$ be a perfect matching in $H$ and $e$ the unique edge in $M \cap \Cut{H}{S}$.
	Consider $M_S \coloneqq \Set{e' \in M \mid e' \subseteq \Compl{S}} \cup \Set{e_s}$, and $M_{\Compl{S}} \coloneqq \Set{e' \in M \mid e' \subseteq S} \cup \Set{e_{\Compl{s}}}$.
	It is straightforward to verify that $M_S$ and $M_{\Compl{S}}$ are perfect matchings of $H_S$ and $H_{\Compl{S}}$ agreeing on $\Cut{H}{S}$.
	
	The correspondence between perfect matchings in $H$ and pairs of perfect matchings in $H_S$, $H_{\Compl{S}}$ agreeing on $\Cut{H}{S}$ implies that $H$ is matching covered if and only if $H_S$ and $H_{\Compl{S}}$ are matching covered.
\end{proof}

Performing a tight cut contraction on a uniform hypergraph does not yield uniform tight cut contractions in general.
However, \cref{obs:tightcut} implies that the tight cut contractions of a \uniformable hypergraph are again \uniformable.

\begin{corollary}\label{cor:preserveuniformable}
	Let $H$ be a matching covered, \uniformable hypergraph, and $\Cut{H}{S}$ a tight cut.
	The two contractions $H_S$ and $H_{\Compl S}$ are matching covered and \uniformable.
\end{corollary}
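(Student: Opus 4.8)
The plan is to treat the two conclusions separately. The ``matching covered'' part is immediate from \cref{prop:matcoveredtight}: since $H$ is matching covered it possesses a perfect matching, and $\Cut{H}{S}$ is a tight cut by hypothesis, so that proposition directly yields that $H_S$ and $H_{\Compl S}$ are matching covered. Hence only the preservation of uniformability requires an argument.

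For this, I would fix a witnessing function $m\colon\Fkt{V}{H}\to\Z_{\geq 1}$ with $\sum_{v\in e}\Fkt{m}{v}=r$ for every $e\in\Fkt{E}{H}$, and let $k\in\Set{1,\dots,r-1}$ be the integer from the first part of \cref{obs:tightcut}, so that $\sum_{v\in e\cap S}\Fkt{m}{v}=k$ for every cut edge $e\in\Cut{H}{S}$. The guiding idea is to keep $m$ on the vertices that survive the contraction and to give each new vertex exactly the weight of the shore it represents, as measured inside a cut edge. Concretely, on $\Fkt{V}{H_S}=\Set{s}\cup\Compl{S}$ I set $m_S(v)=\Fkt{m}{v}$ for $v\in\Compl{S}$ and $m_S(s)=k$, and on $\Fkt{V}{H_{\Compl{S}}}=\Set{\Compl{s}}\cup S$ I set $m_{\Compl{S}}(v)=\Fkt{m}{v}$ for $v\in S$ and $m_{\Compl{S}}(\Compl{s})=r-k$.

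It then remains to check that these functions are uniform multiplications with common parameter $r$. An edge of $H_S$ of the form $e\subseteq\Compl{S}$ is untouched and already satisfies $\sum_{v\in e}m_S(v)=\sum_{v\in e}\Fkt{m}{v}=r$; for a contracted edge $e_s=\Brace{e\setminus S}\cup\Set{s}$ I compute $\sum_{v\in e_s}m_S(v)=\Brace{r-k}+k=r$, using that the weight of $e$ splits as $\sum_{v\in e\cap S}\Fkt{m}{v}=k$ and $\sum_{v\in e\setminus S}\Fkt{m}{v}=r-k$. The symmetric computation handles $H_{\Compl{S}}$, where an edge $e_{\Compl s}=\Brace{e\cap S}\cup\Set{\Compl{s}}$ has weight $k+\Brace{r-k}=r$.

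Finally, the positivity constraint $m_S,m_{\Compl{S}}\ge 1$ holds because the inherited weights were already at least $1$ and the new weights $k$ and $r-k$ both lie in $\Set{1,\dots,r-1}$, which is precisely the range guaranteed by \cref{obs:tightcut}. This makes $\Multi{H_S}{m_S}$ and $\Multi{H_{\Compl{S}}}{m_{\Compl{S}}}$ $r$-uniform, establishing uniformability. I do not anticipate a genuine obstacle here; the only point that deserves care is recognising that the constant $k$ from \cref{obs:tightcut} is exactly the weight forced on the contracted vertex, so that the new edges remain uniform rather than merely congruent modulo $r$, and that the range $\Set{1,\dots,r-1}$ is what keeps the assigned weights strictly positive.
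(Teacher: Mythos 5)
Your proof is correct and follows essentially the same route as the paper: matching coveredness via \cref{prop:matcoveredtight}, and uniformability by assigning the contracted vertices the weights $k$ and $r-k$ supplied by \cref{obs:tightcut}. The only (welcome) addition is your explicit remark that $k, r-k \in \Set{1,\dots,r-1}$ guarantees positivity, which the paper leaves implicit.
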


\begin{proof}
	By \cref{prop:matcoveredtight} the hypergraphs $H_S$ and $H_{\Compl S}$ are matching covered.
	It remains to show that they are \uniformable.
	Let $m : \Fkt{V}{H} \to \Z_{\geq 1}$ such that $\Multi{H}{m}$ is $r$-uniform for some $r\in \Z$.
	By \cref{obs:tightcut} there exists an integer $k\in \Set{1,\ldots, r-1}$ such that $\sum_{v\in S}\Fkt{m}{v}\equiv_r k$ and $\sum_{v \in e\cap S}\Fkt{m}{v}=k$ for all $e\in \Cut{H}{S}$.
	Using $m$ we can define a function $m_S : \Fkt{V}{H_S} \to \Z_{\geq 1}$ such that $\Multi{H_S}{m_S}$ is $r$-uniform.
	Namely, $\Fkt{m_S}{v} \coloneqq \Fkt{m}{v}$ for all $v\in \Compl{S}$ and $\Fkt{m_S}{s} \coloneqq k$.
	If $e\in \E{H_S}$ with $e\subseteq \Compl{S}$, then $\sum_{v\in e} \Fkt{m_S}{v}=\sum_{v\in e}\Fkt{m}{v}=r$.
	All other edges of $H_S$ are of the form $e\setminus S\cup \Set{s}$ for some $e\in \Cut{H}{S}$.
	For those edges we have 
	\begin{align*}
		\sum_{v\in e\setminus S} \Fkt{m_S}{v} + \Fkt{m_S}{s} = \sum_{v\in e\setminus S}\Fkt{m}{v}+k=\sum_{v\in e}\Fkt{m}{v}-\sum_{v\in e\cap S}\Fkt{m}{v}+k=r-k+k=r.
	\end{align*}
	Thus, $\Multi{H_S}{m_S}$ is $r$-uniform.
	Similarly, if we define $m_{\Compl{S}} : \Fkt{V}{H_{\Compl{S}}}\to \Z_{\geq 1}$ by $\Fkt{m_{\Compl{S}}}{v} \coloneqq\Fkt{m}{v}$ for all $v\in S$ and $\Fkt{m_{\Compl{S}}}{\Compl{s}}\coloneqq r-k$, then $\Multi{H_{\Compl{S}}}{m_{\Compl{S}}}$ is $r$-uniform. 
\end{proof}

\section{The Tight Cut Decomposition}
\label{sec:tightCutDecomp}

As we have seen, tight cuts and the tight cut contraction can be generalised to hypergraphs.
However, the properties of tight cut contractions and the tight cut decomposition do not carry over to the world of hypergraphs as easily.

In this section we formally introduce the tight cut decomposition of hypergraphs.
Then we will point out some of the difficulties in the properties of tight cuts and their contractions and give an example of a matching covered hypergraph that has two distinct tight cut decompositions.
As the uniqueness for general hypergraphs cannot exist, we then focus on the \uniformable case in order to prove our main theorem.

First, we define formally what we mean by a tight cut decomposition by considering \cref{alg:tcd}.
It takes a matching covered hypergraph as an input and decomposes it along a non-trivial tight cut if any exists.
In this way, two new matching covered hypergraphs arise.
If at least one of them has a non-trivial tight cut, then the algorithm contracts again both shores of this cut.
In each iteration of the while loop the number of obtained hypergraphs increases by one.
As we always choose a non-trivial tight cut in \cref{chooseTightcut}, the hypergraphs $\Brace{H_j}_S$ and $\Brace{H_j}_{\Compl{S}}$ have less vertices than $H_j$.
This implies that the algorithm will eventually terminate because a hypergraph with less than four vertices has no non-trivial tight cuts.

\begin{algorithm}
  \caption{Tight Cut Decomposition Procedure}
  \label{alg:tcd}
  \begin{algorithmic}[1]
  \Procedure{Tight Cut Decomposition}{$H$}
  \State $i\leftarrow 1$
  \State $H_1\leftarrow H$
  \While {$\exists j\in\Set{1,\ldots, i}$ s.t.\ $H_j$ has a non-trivial tight cut.}
  \State Choose any such $H_j$ and $S\subseteq \V{H_j}$ defining a non-trivial tight cut, \label{chooseTightcut}
  \State $H_j\leftarrow \Brace{H_j}_{\Compl{S}}$,
  \State $H_{i+1}\leftarrow \Brace{H_j}_{S}$,
  \State $i\leftarrow i+1$.
  \EndWhile
  \State \textbf{return} $H_1,\ldots, H_i$.
  \EndProcedure
  \end{algorithmic}
  \end{algorithm}
   
  Now, a \emph{tight cut decomposition} of a matching covered hypergraph $H$ is the output obtained from \cref{alg:tcd}.
  This means that a tight cut decomposition consists of a list of matching covered hypergraphs without non-trivial tight cuts that where obtained from $H$ by successive tight cut contractions.
  \Cref{fig:tcd} shows an example of the tight cut decomposition of a matching covered $3$-uniform hypergraph $H$.
  
\begin{figure}[h!]
  	\centering
  	\begin{tikzpicture}[scale=0.9]
	  	\def\Hdist{1.3}
	  	\def\Vdist{1.2}
	  	\path (-\Hdist/2,\Vdist/2) rectangle (5.5*\Hdist,-2.5*\Vdist);
	  	\definecolor{blue3}{rgb}{0.54, 0.81, 0.94}
	  	\definecolor{blue2}{rgb}{0.0, 0.5, 1.0} 
	  	\definecolor{blue4}{rgb}{0.0, 0.5, 0.69}
	  		  	
	  	\definecolor{CutRed}{rgb}{0.76, 0.23, 0.13}
	  	\definecolor{CutViolet}{rgb}{0.55, 0.0, 0.55}
	  	\definecolor{CutLightGreen}{rgb}{0.13, 0.55, 0.13}
	  	\definecolor{CutDarkGreen}{rgb}{0.0, 0.42, 0.24}
	  	\definecolor{CutPink}{rgb}{1.0, 0.11, 0.81}
	  	\colorlet{CutRedText}{CutRed!80!black}
	  	\colorlet{CutVioletText}{CutViolet!80!black}
	  	\colorlet{CutLightGreenText}{AO}
	  	\colorlet{CutDarkGreenText}{CutDarkGreen!80!black}
	  	\colorlet{CutPinkText}{CutPink!80!black}
	  	
	  	\definecolor{inchworm}{rgb}{0.7, 0.93, 0.36}
	  	
	  	\tikzset{Brace/.style={shape=circle, fill=inchworm!70!white,inner sep=0.4mm}}
	  	
	  	\node[vertex] (A1) at (0,0) {};
	  	\node at ($(A1)+(-{\Hdist*(2/3)},{\Vdist*(1/3)})$) {\textcolor{blue4}{$H$}};
	  	\node[vertex] (A2) at ($(A1)+(\Hdist,0)$) {};
	  	\node[vertex] (A3) at ($(A1)+(0,-\Vdist)$) {};
	  	\node[vertex] (A4) at ($(A3)+(\Hdist,0)$) {};
	  	\node[vertex] (A5) at ($(A3)+(0,-\Vdist)$) {};
	  	\node[vertex] (A6) at ($(A5)+(\Hdist,0)$) {};
	  	
	  	\node[vertex] (B1) at ($(A2)+(\Hdist,0)$) {};
	  	\node[vertex] (B2) at ($(B1)+(\Hdist,0)$) {};
	  	\node[vertex] (B3) at ($(B1)+(0,-\Vdist)$) {};
	  	\node[vertex] (B4) at ($(B3)+(\Hdist,0)$) {};
	  	\node[vertex] (B5) at ($(B3)+(0,-\Vdist)$) {};
	  	\node[vertex] (B6) at ($(B5)+(\Hdist,0)$) {};
	  	
	  	\node[vertex] (C1) at ($(B2)+(\Hdist,0)$) {};
	  	\node[vertex] (D1) at ($(C1)+(\Hdist,0)$) {};
	  	\node[vertex] (C2) at ($(C1)+(0,-\Vdist)$) {};
	  	\node[vertex] (D2) at ($(C2)+(\Hdist,0)$) {};
	  	\node[vertex] (C3) at ($(C2)+(0,-\Vdist)$) {};
	  	\node[vertex] (D3) at ($(C3)+(\Hdist,0)$) {};
	  	
	  	\draw ($(A1)+(90:0.3)$) edge[Hedge,closed,curve through={($(A1)+(30:0.2)$) ($(A3)+(360:0.25)$) ($(A5)+(320:0.2)$) ($(A5)+(270:0.3)$) ($(A5)+(210:0.2)$) ($(A3)+(180:0.25)$)}] ($(A1)+(150:0.2)$);
	  	\draw ($(A2)+(90:0.3)$) edge[Hedge,blue3,closed,curve through={($(A2)+(30:0.2)$) ($(A4)+(360:0.25)$) ($(A6)+(320:0.2)$) ($(A6)+(270:0.3)$) ($(A6)+(210:0.2)$) ($(A4)+(180:0.25)$)}] ($(A2)+(150:0.2)$);
	  	\draw ($(C1)+(90:0.3)$) edge[Hedge,closed,curve through={($(C1)+(30:0.2)$) ($(C2)+(360:0.25)$) ($(C3)+(320:0.2)$) ($(C3)+(270:0.3)$) ($(C3)+(210:0.2)$) ($(C2)+(180:0.25)$)}] ($(C1)+(150:0.2)$);
	  	\draw ($(A2)+(180:0.3)$) edge[Hedge,closed,curve through={($(A2)+(120:0.2)$) ($(B1)+(90:0.25)$) ($(B2)+(60:0.2)$) ($(B2)+(360:0.3)$) ($(B2)+(300:0.2)$) ($(B1)+(270:0.25)$)}] ($(A2)+(240:0.2)$);
	  	\draw ($(A4)+(180:0.3)$) edge[Hedge,closed,curve through={($(A4)+(120:0.2)$) ($(B3)+(90:0.25)$) ($(B4)+(60:0.2)$) ($(B4)+(360:0.3)$) ($(B4)+(300:0.2)$) ($(B3)+(270:0.25)$)}] ($(A4)+(240:0.2)$);
	  	\draw ($(A6)+(180:0.3)$) edge[Hedge,closed,curve through={($(A6)+(120:0.2)$) ($(B5)+(90:0.25)$) ($(B6)+(60:0.2)$) ($(B6)+(360:0.3)$) ($(B6)+(300:0.2)$) ($(B5)+(270:0.25)$)}] ($(A6)+(240:0.2)$);
	  	
	  	\draw ($(D2)+(45:0.2)$) edge[Hedge,blue3,closed,curve through={($(D2)+(360:0.2)$) ($(D3)+(360:0.2)$) ($(D3)+(315:0.2)$) ($(D3)+(270:0.2)$) ($(D2)+(225:1.2)$) ($(C2)+(180:0.2)$) ($(C2)+(135:0.2)$) ($(C2)+(90:0.2)$) }] ($(D2)+(90:0.2)$);
	  	
	  	\draw ($(C1)+(90:0.2)$) edge[Hedge,blue4,closed,curve through={($(D1)+(90:0.2)$) ($(D1)+(45:0.2)$) ($(D1)+(360:0.2)$) ($(D2)+(180:0.5)$) ($(C3)+(315:0.2)$) ($(C3)+(270:0.2)$) ($(C3)+(180:0.2)$) ($(C3)+(135:0.2)$) ($(C2)+(360:0.5)$) ($(C1)+(180:0.2)$)}] ($(C1)+(135:0.2)$);
	  	
	  	\draw ($(B1)+(90:0.3)$) edge[Medge,closed,curve through={($(B1)+(30:0.2)$) ($(B3)+(360:0.25)$) ($(B5)+(320:0.2)$) ($(B5)+(270:0.3)$) ($(B5)+(210:0.2)$) ($(B3)+(180:0.25)$)}] ($(B1)+(150:0.2)$);
	  	\draw ($(D1)+(90:0.3)$) edge[Medge,closed,curve through={($(D1)+(30:0.2)$) ($(D2)+(360:0.25)$) ($(D3)+(320:0.2)$) ($(D3)+(270:0.3)$) ($(D3)+(210:0.2)$) ($(D2)+(180:0.25)$)}] ($(D1)+(150:0.2)$);
	  	
	  	\draw ($(C3)+(315:0.2)$) edge[Medge,closed,curve through={($(C3)+(360:0.2)$) ($(C2)+(360:0.2)$) ($(C2)+(45:0.2)$) ($(C2)+(90:0.2)$) ($(C3)+(135:1.2)$) ($(B6)+(180:0.2)$) ($(B6)+(225:0.2)$) ($(B6)+(270:0.2)$) }] ($(C3)+(270:0.2)$);
	  	\draw ($(B2)+(135:0.2)$) edge[Medge,closed,curve through={($(B2)+(180:0.2)$) ($(B4)+(180:0.2)$) ($(B4)+(235:0.2)$) ($(B4)+(270:0.2)$) ($(B2)+(315:1.2)$) ($(C1)+(360:0.2)$) ($(C1)+(45:0.2)$) ($(C1)+(90:0.2)$) }] ($(B2)+(90:0.2)$);
	  	\draw ($(A2)+(45:0.2)$) edge[Medge,closed,curve through={($(A2)+(360:0.2)$) ($(A4)+(360:0.2)$) ($(A4)+(315:0.2)$) ($(A4)+(270:0.2)$) ($(A2)+(225:1.2)$) ($(A1)+(180:0.2)$) ($(A1)+(135:0.2)$) ($(A1)+(90:0.2)$) }] ($(A2)+(90:0.2)$);
	  	\draw ($(A5)+(225:0.2)$) edge[Medge,closed,curve through={($(A5)+(180:0.2)$) ($(A3)+(180:0.2)$) ($(A3)+(135:0.2)$) ($(A3)+(90:0.2)$) ($(A5)+(45:1.2)$) ($(A6)+(360:0.2)$) ($(A6)+(315:0.2)$) ($(A6)+(270:0.2)$) }] ($(A5)+(270:0.2)$);
	  	
	  	\coordinate (Astart) at ($(B2)+(\Hdist/2,\Vdist/2)$);
	  	\coordinate (Aend) at ($(B6)-(\Hdist/2,{\Vdist*(2/3)})$);
	  	\node[CutPink] (ACutlabel) at ($(B6)-(\Hdist/2,\Vdist)$) {$\Cut{}{A}$};
	  	\def\CutACurve{%
	  		(Astart).. ($(B2)+(360:0.5)$) .. ($(B4)+(360:0.4)$) .. ($(B4)+(270:\Vdist/2)$) .. ($(B6)+(180:0.4)$) .. (Aend)
	  	}
	  	\draw[CutPink,line width=3pt,opacity=0.7,use Hobby shortcut, closed=false] \CutACurve;
	  	
	  	\node[CutPink] (Alabel) at ($(B1)+(-\Hdist/4,\Vdist/2)$) {$A$};
	  	\node[CutPink] (ACompllabel) at ($(D1)+(-\Hdist/4,\Vdist/2)$) {$\Compl{A}$};

	  	\node (HAC) at ($(A5)+(0,-3*\Vdist)$) {\begin{tikzpicture}[scale=0.9]
	  		\path[use as bounding box] (-\Hdist/2,\Vdist/2) rectangle (4.5*\Hdist,-2.5*\Vdist);
	  		\node at ($(A1)+(-{\Hdist*(2/3)},{\Vdist*(1/3)})$) {\textcolor{blue4}{$H_{\Compl{A}}$}};
		  	\node[vertex] (A1) at (0,0) {};
		  	\node[vertex] (A2) at ($(A1)+(\Hdist,0)$) {};
		  	\node[vertex] (A3) at ($(A1)+(0,-\Vdist)$) {};
		  	\node[vertex] (A4) at ($(A3)+(\Hdist,0)$) {};
		  	\node[vertex] (A5) at ($(A3)+(0,-\Vdist)$) {};
		  	\node[vertex] (A6) at ($(A5)+(\Hdist,0)$) {};
		  	
		  	\node[vertex] (B1) at ($(A2)+(\Hdist,0)$) {};
		  	\node[vertex] (B2) at ($(B1)+(\Hdist,0)$) {};
		  	\node[vertex] (B3) at ($(B1)+(0,-\Vdist)$) {};
		  	\node[vertex] (B4) at ($(B3)+(\Hdist,0)$) {};
		  	\node[vertex] (B5) at ($(B3)+(0,-\Vdist)$) {};
		  	
		  	\node[vertex,CutPink,label={[CutPinkText,label distance=0.8ex]5:$\Compl{a}$}] (aC) at ($(B4)+(0.7*\Hdist,0)$) {};
		  	
		  	\draw ($(A1)+(90:0.3)$) edge[Hedge,closed,curve through={($(A1)+(30:0.2)$) ($(A3)+(360:0.25)$) ($(A5)+(320:0.2)$) ($(A5)+(270:0.3)$) ($(A5)+(210:0.2)$) ($(A3)+(180:0.25)$)}] ($(A1)+(150:0.2)$);
		  	\draw ($(A2)+(90:0.3)$) edge[Hedge,blue3,closed,curve through={($(A2)+(30:0.2)$) ($(A4)+(360:0.25)$) ($(A6)+(320:0.2)$) ($(A6)+(270:0.3)$) ($(A6)+(210:0.2)$) ($(A4)+(180:0.25)$)}] ($(A2)+(150:0.2)$);
		  	\draw ($(A2)+(180:0.3)$) edge[Hedge,closed,curve through={($(A2)+(120:0.2)$) ($(B1)+(90:0.25)$) ($(B2)+(60:0.2)$) ($(B2)+(360:0.3)$) ($(B2)+(300:0.2)$) ($(B1)+(270:0.25)$)}] ($(A2)+(240:0.2)$);
		  	\draw ($(A4)+(180:0.3)$) edge[Hedge,closed,curve through={($(A4)+(120:0.2)$) ($(B3)+(90:0.25)$) ($(B4)+(60:0.2)$) ($(B4)+(360:0.3)$) ($(B4)+(300:0.2)$) ($(B3)+(270:0.25)$)}] ($(A4)+(240:0.2)$);
		  	\draw ($(A6)+(180:0.3)$) edge[Hedge,closed,curve through={($(A6)+(120:0.2)$) ($(B5)+(90:0.25)$) ($(B6)+(90:0.25)$) ($(aC)+(60:0.2)$) ($(aC)+(360:0.3)$) ($(aC)+(300:0.2)$) ($(B6)+(270:0.25)$) ($(B5)+(270:0.25)$)}] ($(A6)+(240:0.2)$);
		  	
		  	\draw ($(B1)+(90:0.3)$) edge[Medge,closed,curve through={($(B1)+(30:0.2)$) ($(B3)+(360:0.25)$) ($(B5)+(320:0.2)$) ($(B5)+(270:0.3)$) ($(B5)+(210:0.2)$) ($(B3)+(180:0.25)$)}] ($(B1)+(150:0.2)$);

		  	\draw ($(B2)+(135:0.2)$) edge[Medge,closed,curve through={($(B2)+(180:0.2)$) ($(B4)+(180:0.2)$) ($(B4)+(235:0.2)$) ($(B4)+(270:0.2)$) ($(B4)+(315:0.3)$) ($(aC)+(235:0.3)$) ($(aC)+(360:0.3)$) ($(aC)+(90:0.3)$) }] ($(B2)+(90:0.2)$);
		  	\draw ($(A2)+(45:0.2)$) edge[Medge,closed,curve through={($(A2)+(360:0.2)$) ($(A4)+(360:0.2)$) ($(A4)+(315:0.2)$) ($(A4)+(270:0.2)$) ($(A2)+(225:1.2)$) ($(A1)+(180:0.2)$) ($(A1)+(135:0.2)$) ($(A1)+(90:0.2)$) }] ($(A2)+(90:0.2)$);
		  	\draw ($(A5)+(225:0.2)$) edge[Medge,closed,curve through={($(A5)+(180:0.2)$) ($(A3)+(180:0.2)$) ($(A3)+(135:0.2)$) ($(A3)+(90:0.2)$) ($(A5)+(45:1.2)$) ($(A6)+(360:0.2)$) ($(A6)+(315:0.2)$) ($(A6)+(270:0.2)$) }] ($(A5)+(270:0.2)$);
		  	
		  	\coordinate (Bstart) at ($(A1)+(\Hdist/2,\Vdist/2)$);
		  	\coordinate (Bend) at ($(A6)-(0,{\Vdist*(2/3)})$);
		  	\node[CutRedText] (BCutlabel) at ($(A6)+(0,-\Vdist)$) {$\Cut{}{B}$};
		  	\def\CutBCurve{%
		  		(Bstart).. ($(A2)+(180:0.5)$) .. ($(A4)+(180:0.4)$) .. ($(A4)+(270:\Vdist/2)$) .. ($(A6)+(20:0.4)$) .. (Bend)
		  	}
		  	\draw[CutRed,line width=3pt,opacity=0.7,use Hobby shortcut, closed=false] \CutBCurve;
		  	
		  	\node[CutRedText] (Blabel) at ($(A1)+(0,\Vdist/2)$) {$B$};
		  	\node[CutRedText] (BCompllabel) at ($(B2)+(-\Hdist/4,\Vdist/2)$) {$\Compl{B}$};
	  	\end{tikzpicture}};
	  	
	  	\node (HA) at ($(C3)+(\Hdist,-3*\Vdist)$) {\begin{tikzpicture}[scale=0.9]
	  		\path[use as bounding box] (2.5*\Hdist,\Vdist/2) rectangle (5.5*\Hdist,-2.5*\Vdist);
	  		\node at ($(B2)$) {\textcolor{blue4}{$H_{A}$}};
		  	\node[vertex,CutPink,label={[CutPinkText,label distance=0.8ex]175:$a$}] (a) at ($(B3)+(\Hdist,0)$) {};
		  	\node[vertex] (B6) at ($(B5)+(\Hdist,0)$) {};
		  	
		  	\node[vertex] (C1) at ($(B2)+(\Hdist,0)$) {};
		  	\node[vertex] (D1) at ($(C1)+(\Hdist,0)$) {};
		  	\node[vertex] (C2) at ($(C1)+(0,-\Vdist)$) {};
		  	\node[vertex] (D2) at ($(C2)+(\Hdist,0)$) {};
		  	\node[vertex] (C3) at ($(C2)+(0,-\Vdist)$) {};
		  	\node[vertex] (D3) at ($(C3)+(\Hdist,0)$) {};
		  	
		  	\draw ($(C1)+(90:0.3)$) edge[Hedge,closed,curve through={($(C1)+(30:0.2)$) ($(C2)+(360:0.25)$) ($(C3)+(320:0.2)$) ($(C3)+(270:0.3)$) ($(C3)+(210:0.2)$) ($(C2)+(180:0.25)$)}] ($(C1)+(150:0.2)$);
		  	\draw ($(B6)+(180:0.2)$) edge[Hedge,closed,curve through={($(a)+(180:0.2)$) ($(a)+(90:0.25)$) ($(a)+(0:0.2)$) ($(B6)+(360:0.2)$)}] ($(B6)+(270:0.25)$);
		  	\draw ($(D2)+(45:0.2)$) edge[Hedge,blue3,closed,curve through={($(D2)+(360:0.2)$) ($(D3)+(360:0.2)$) ($(D3)+(315:0.2)$) ($(D3)+(270:0.2)$) ($(D2)+(225:1.2)$) ($(C2)+(180:0.2)$) ($(C2)+(135:0.2)$) ($(C2)+(90:0.2)$) }] ($(D2)+(90:0.2)$);
		  	\draw ($(C1)+(90:0.2)$) edge[Hedge,blue4,closed,curve through={($(D1)+(90:0.2)$) ($(D1)+(45:0.2)$) ($(D1)+(360:0.2)$) ($(D2)+(180:0.5)$) ($(C3)+(315:0.2)$) ($(C3)+(270:0.2)$) ($(C3)+(180:0.2)$) ($(C3)+(135:0.2)$) ($(C2)+(360:0.5)$) ($(C1)+(180:0.2)$)}] ($(C1)+(135:0.2)$);
		  	
		  	\draw ($(D1)+(90:0.3)$) edge[Medge,closed,curve through={($(D1)+(30:0.2)$) ($(D2)+(360:0.25)$) ($(D3)+(320:0.2)$) ($(D3)+(270:0.3)$) ($(D3)+(210:0.2)$) ($(D2)+(180:0.25)$)}] ($(D1)+(150:0.2)$);
		  	
		  	\draw ($(C3)+(315:0.2)$) edge[Medge,closed,curve through={($(C3)+(360:0.2)$) ($(C2)+(360:0.2)$) ($(C2)+(45:0.2)$) ($(C2)+(90:0.2)$) ($(C3)+(135:1.2)$) ($(B6)+(180:0.2)$) ($(B6)+(225:0.2)$) ($(B6)+(270:0.2)$) }] ($(C3)+(270:0.2)$);
		  	\draw ($(a)+(110:0.3)$) edge[Medge,closed,curve through={($(a)+(235:0.25)$) ($(a)+(340:0.3)$) ($(C1)+(290:0.3)$) ($(C1)+(45:0.25)$)}] ($(C1)+(150:0.3)$);
		  	
		  	\coordinate (Cstart) at ($(C1)+(\Hdist/2,\Vdist/2)$);
		  	\coordinate (Cend) at ($(C3)-(\Hdist/2,{\Vdist*(2/3)})$);
		  	\node[CutDarkGreenText] (CCutlabel) at ($(C3)-(\Hdist/2,\Vdist)$) {$\Cut{}{C}$};
		  	\def\CutCCurve{%
		  		(Cstart).. ($(C1)+(360:0.5)$) .. ($(C1)+(270:\Vdist/2)$) .. ($(C2)+(180:0.4)$) .. ($(C3)+(180:0.4)$) .. (Cend)
		  	}
		  	\draw[CutDarkGreen,line width=3pt,opacity=0.7,use Hobby shortcut, closed=false] \CutCCurve;
		  	
		  	\node[CutDarkGreenText] (Clabel) at ($(C1)+(-\Hdist/4,\Vdist/2)$) {$C$};
		  	\node[CutDarkGreenText] (CCompllabel) at ($(D1)+(\Hdist/4,\Vdist/2)$) {$\Compl{C}$};
	  	\end{tikzpicture}};
	  	
	  	\node (HB) at ($(HAC)+(\Hdist,-4*\Vdist)$) {\begin{tikzpicture}[scale=0.9]
		  	\path[use as bounding box] (0.5*\Hdist,\Vdist/2) rectangle (3.5*\Hdist,-2.5*\Vdist);
		  	\node[Brace] at ($(B1)+({\Hdist*(2/3)},{\Vdist*(2/3)})$) {\textcolor{blue4}{$H_{B}$}};
		  	\node (A1) at (0,0) {};
			\node[vertex] (A2) at ($(A1)+(\Hdist,0)$) {};
			\node[vertex] (A4) at ($(A2)+(0,-\Vdist)$) {};
			\node[vertex,CutRed,label={[CutRedText,label distance=0.8ex]185:$b$}] (A6) at ($(A4)+(0,-\Vdist)$) {};
			\node[vertex] (B1) at ($(A2)+(\Hdist,0)$) {};
			\node[vertex] (B2) at ($(B1)+(\Hdist,0)$) {};
			\node[vertex] (B3) at ($(B1)+(0,-\Vdist)$) {};
			\node[vertex] (B4) at ($(B3)+(\Hdist,0)$) {};
			\node[vertex] (B5) at ($(B3)+(0,-\Vdist)$) {};
			\node[vertex,CutPink,label={[CutPinkText,label distance=0.8ex]355:$\Compl{a}$}] (B6) at ($(B5)+(\Hdist,0)$) {};
					
			\draw ($(A2)+(180:0.3)$) edge[Hedge,closed,curve through={($(A2)+(120:0.2)$) ($(B1)+(90:0.25)$) ($(B2)+(60:0.2)$) ($(B2)+(360:0.3)$) ($(B2)+(300:0.2)$) ($(B1)+(270:0.25)$)}] ($(A2)+(240:0.2)$);
			\draw ($(A4)+(180:0.3)$) edge[Hedge,closed,curve through={($(A4)+(120:0.2)$) ($(B3)+(90:0.25)$) ($(B4)+(60:0.2)$) ($(B4)+(360:0.3)$) ($(B4)+(300:0.2)$) ($(B3)+(270:0.25)$)}] ($(A4)+(240:0.2)$);
			\draw ($(A6)+(180:0.3)$) edge[Hedge,closed,curve through={($(A6)+(120:0.2)$) ($(B5)+(90:0.25)$) ($(B6)+(60:0.2)$) ($(B6)+(360:0.3)$) ($(B6)+(300:0.2)$) ($(B5)+(270:0.25)$)}] ($(A6)+(240:0.2)$);
			\draw ($(A2)+(90:0.3)$) edge[Hedge,closed,curve through={($(A2)+(30:0.2)$) ($(A4)+(360:0.25)$) ($(A6)+(320:0.2)$) ($(A6)+(270:0.3)$) ($(A6)+(210:0.2)$) ($(A4)+(180:0.25)$)}] ($(A2)+(150:0.2)$);
			
			\draw ($(B1)+(90:0.3)$) edge[Medge,closed,curve through={($(B1)+(30:0.2)$) ($(B3)+(360:0.25)$) ($(B5)+(320:0.2)$) ($(B5)+(270:0.3)$) ($(B5)+(210:0.2)$) ($(B3)+(180:0.25)$)}] ($(B1)+(150:0.2)$);
			\draw ($(A2)+(90:0.35)$) edge[Medge,closed,curve through={($(A2)+(30:0.3)$) ($(A4)+(360:0.35)$) ($(A6)+(320:0.3)$) ($(A6)+(270:0.35)$) ($(A6)+(210:0.3)$) ($(A4)+(180:0.35)$)}] ($(A2)+(150:0.3)$);
			\draw ($(B2)+(90:0.3)$) edge[Medge,closed,curve through={($(B2)+(30:0.2)$) ($(B4)+(360:0.25)$) ($(B6)+(320:0.2)$) ($(B6)+(270:0.3)$) ($(B6)+(210:0.2)$) ($(B4)+(180:0.25)$)}] ($(B2)+(150:0.2)$);
		\end{tikzpicture}};
	  	
	  	\node (HBC) at ($(HAC)+(-2*\Hdist,-4*\Vdist)$) {\begin{tikzpicture}[scale=0.9]
	  		\path[use as bounding box] (-0.5*\Hdist,\Vdist/2) rectangle (1.5*\Hdist,-2.5*\Vdist);
	  		\node at ($(A1)+({\Hdist*(1/3)},{\Vdist*(2/3)})$) {\textcolor{blue4}{$H_{\Compl{B}}$}};
		  	\node[vertex] (A1) at (0,0) {};
		  	\node[vertex,CutRed,label={[CutRedText,label distance=0.8ex]5:$\Compl{b}$}] (bc) at ($(A1)+(\Hdist,-\Vdist/2)$) {};
		  	\node[vertex] (A3) at ($(A1)+(0,-\Vdist)$) {};
		  	\node[vertex] (A5) at ($(A3)+(0,-\Vdist)$) {};
		  	\node[vertex] (A6) at ($(A5)+(\Hdist,0)$) {};
		  	
		  	\draw ($(A1)+(90:0.3)$) edge[Hedge,closed,curve through={($(A1)+(30:0.2)$) ($(A3)+(360:0.25)$) ($(A5)+(320:0.2)$) ($(A5)+(270:0.3)$) ($(A5)+(210:0.2)$) ($(A3)+(180:0.25)$)}] ($(A1)+(150:0.2)$);
		  	\draw ($(A6)+(180:0.2)$) edge[Hedge,closed,curve through={($(bc)+(180:0.2)$) ($(bc)+(90:0.25)$) ($(bc)+(0:0.2)$) ($(A6)+(360:0.2)$)}] ($(A6)+(270:0.25)$);
		  	\draw ($(A6)+(180:0.3)$) edge[Hedge,blue3,closed,curve through={($(bc)+(180:0.3)$) ($(bc)+(90:0.35)$) ($(bc)+(0:0.3)$) ($(A6)+(360:0.3)$)}] ($(A6)+(270:0.35)$);
		  	
		  	\draw ($(A1)+(135:0.3)$) edge[Medge,closed,curve through={($(A1)+(70:0.25)$) ($(bc)+(110:0.25)$) ($(bc)+(315:0.3)$) ($(bc)+(200:0.3)$)}] ($(A1)+(250:0.2)$);
		  	\draw ($(A5)+(225:0.2)$) edge[Medge,closed,curve through={($(A5)+(180:0.2)$) ($(A3)+(180:0.2)$) ($(A3)+(135:0.2)$) ($(A3)+(90:0.2)$) ($(A5)+(45:1.2)$) ($(A6)+(360:0.2)$) ($(A6)+(315:0.2)$) ($(A6)+(270:0.2)$) }] ($(A5)+(270:0.2)$);
		  	
		  	
		  	\coordinate (Dstart) at ($(A1)-(\Hdist/2,\Vdist/2)$);
		  	\coordinate (Dend) at ($(A5)+(\Hdist/2,-{\Vdist*(2/3)})$);
		  	\node[CutLightGreenText] (DCutlabel) at ($(A5)+(0.7*\Hdist,-\Vdist)$) {$\Cut{}{D}$};
		  	\def\CutDCurve{%
		  		(Dstart).. ($(A3)+(90:0.5)$) .. ($(A3)+(0:0.6)$) .. ($(A5)+(20:0.4)$) .. (Dend)
		  	}
		  	\draw[CutLightGreen,line width=3pt,opacity=0.7,use Hobby shortcut, closed=false] \CutDCurve;
		  	
		  	\node[CutLightGreenText] (Dlabel) at ($(A5)+(-\Hdist/4,-\Vdist/2)$) {$D$};
		  	\node[CutLightGreenText] (DCompllabel) at ($(A6)+(\Hdist/4,-\Vdist/2)$) {$\Compl{D}$};
	  	\end{tikzpicture}};
	  	
	  	\node (HDC) at ($(HBC)+(0,-4*\Vdist)$) {\begin{tikzpicture}[scale=0.9]
	  		\path[use as bounding box] (-0.5*\Hdist,\Vdist/2) rectangle (0.5*\Hdist,-2.5*\Vdist);
	  		\node[Brace] at ($(A1)+(-{\Hdist*(2/3)},{\Vdist*(1/3)})$) {\textcolor{blue4}{$H_{\Compl{D}}$}};
		  	\node[vertex,CutLightGreen,label={[CutLightGreenText,label distance=0.8ex]5:$\Compl{d}$}] (A1) at (0,0) {};
		  	\node[vertex] (A3) at ($(A1)+(0,-\Vdist)$) {};
		  	\node[vertex] (A5) at ($(A3)+(0,-\Vdist)$) {};
		  	
		  	\draw ($(A1)+(90:0.3)$) edge[Hedge,closed,curve through={($(A1)+(30:0.2)$) ($(A3)+(360:0.25)$) ($(A5)+(320:0.2)$) ($(A5)+(270:0.3)$) ($(A5)+(210:0.2)$) ($(A3)+(180:0.25)$)}] ($(A1)+(150:0.2)$);
		  	
		  	\draw ($(A1)+(90:0.4)$) edge[Medge,closed,curve through={($(A1)+(30:0.3)$) ($(A3)+(360:0.35)$) ($(A5)+(320:0.3)$) ($(A5)+(270:0.4)$) ($(A5)+(210:0.3)$) ($(A3)+(180:0.35)$)}] ($(A1)+(150:0.3)$);
		  
	  	\end{tikzpicture}};
	  	
	  	\node (HD) at ($(HBC)+(2*\Hdist,-4*\Vdist)$) {\begin{tikzpicture}[scale=0.9]
	  		\path[use as bounding box] (-0.5*\Hdist,\Vdist/2) rectangle (1.5*\Hdist,-2.5*\Vdist);
	  		\node[Brace] at ($(A1)+(\Hdist,{\Vdist*(1/3)})$) {\textcolor{blue4}{$H_{D}$}};
		  	\node[vertex] (A1) at (\Hdist/2,0) {};
		  	\node[vertex,CutRed,label={[CutRedText,label distance=1ex]5:$\Compl{b}$}] (bc) at ($(A1)+(\Hdist/2,-\Vdist)$) {};
		  	\node[vertex,CutLightGreen,label={[CutLightGreenText,label distance=1ex]175:$d$}] (d) at ($(A1)+(-\Hdist/2,-\Vdist)$) {};
		  	\node[vertex] (A6) at ($(A1)+(0,-2*\Vdist)$) {};

		  	\draw ($(A1)+(90:0.3)$) edge[Hedge,closed,curve through={($(A1)+(30:0.2)$) ($(d)+(320:0.2)$) ($(d)+(270:0.3)$) ($(d)+(210:0.2)$)}] ($(A1)+(150:0.2)$);

		  	\draw ($(A6)+(180:0.2)$) edge[Hedge,closed,curve through={($(bc)+(180:0.2)$) ($(bc)+(90:0.25)$) ($(bc)+(0:0.2)$) ($(A6)+(360:0.2)$)}] ($(A6)+(270:0.25)$);
		  	\draw ($(A6)+(180:0.3)$) edge[Hedge,blue3,closed,curve through={($(bc)+(180:0.3)$) ($(bc)+(90:0.35)$) ($(bc)+(0:0.3)$) ($(A6)+(360:0.3)$)}] ($(A6)+(270:0.35)$);
		  	
		  	\draw ($(bc)+(50:0.2)$) edge[Medge,closed,curve through={($(bc)+(335:0.3)$) ($(bc)+(210:0.2)$) ($(A1)+(240:0.2)$) ($(A1)+(135:0.3)$) }] ($(A1)+(70:0.2)$);
		  	\draw ($(A6)+(50:0.2)$) edge[Medge,closed,curve through={($(A6)+(335:0.3)$) ($(A6)+(210:0.2)$) ($(d)+(240:0.2)$) ($(d)+(135:0.3)$) }] ($(d)+(70:0.2)$);
	  	\end{tikzpicture}};
	  	
	  	\node (HCC) at ($(HA)+(-\Hdist,-4*\Vdist)$) {\begin{tikzpicture}[scale=0.9]
		  	\path[use as bounding box] (2.5*\Hdist,\Vdist/2) rectangle (4.5*\Hdist,-2.5*\Vdist);
	  		\node[Brace] at ($(B2)+(0,{\Vdist*(1/3)})$) {\textcolor{blue4}{$H_{\Compl{C}}$}};
		  	\node[vertex,CutPink,label={[CutPinkText,label distance=0.8ex]175:$a$}] (a) at ($(B3)+(\Hdist,0)$) {};
		  	\node[vertex] (B6) at ($(a)+(\Hdist/2,-\Vdist)$) {};
		  	\node[vertex] (C1) at ($(B2)+(\Hdist/2,0)$) {};
		  	\node[vertex,CutDarkGreen,label={[CutDarkGreenText,label distance=0.8ex]5:$\Compl{c}$}] (cC) at ($(C1)+(\Hdist/2,-\Vdist)$) {};
		  	
		  	\draw ($(B6)+(180:0.2)$) edge[Hedge,closed,curve through={($(a)+(180:0.2)$) ($(a)+(90:0.25)$) ($(a)+(0:0.2)$) ($(B6)+(360:0.2)$)}] ($(B6)+(270:0.25)$);
		  	\draw ($(cC)+(180:0.2)$) edge[Hedge,closed,curve through={($(C1)+(180:0.2)$) ($(C1)+(90:0.25)$) ($(C1)+(0:0.2)$) ($(cC)+(360:0.2)$)}] ($(cC)+(270:0.25)$);
		  	
		  	\draw ($(C1)+(90:0.3)$) edge[Medge,closed,curve through={($(C1)+(30:0.2)$) ($(a)+(320:0.2)$) ($(a)+(270:0.3)$) ($(a)+(210:0.2)$)}] ($(C1)+(150:0.2)$);
		  	\draw ($(cC)+(90:0.3)$) edge[Medge,closed,curve through={($(cC)+(30:0.2)$) ($(B6)+(320:0.2)$) ($(B6)+(270:0.3)$) ($(B6)+(210:0.2)$)}] ($(cC)+(150:0.2)$);		  	
		\end{tikzpicture}};
		
	  	\node (HC) at ($(HA)+(1.5*\Hdist,-4*\Vdist)$) {\begin{tikzpicture}[scale=0.9]
		  	\path[use as bounding box] (3*\Hdist,\Vdist/2) rectangle (5*\Hdist,-2.5*\Vdist);
	  		\node at ($(D1)+({\Hdist*(1/4)},{\Vdist*(1/3)})$) {\textcolor{blue4}{$H_{C}$}};
		  	\node[vertex,CutDarkGreen,label={[CutDarkGreenText,label distance=1ex]175:$c$}] (c) at ($(C1)$) {};
		  	\node[vertex] (D1) at ($(C1)+(\Hdist,0)$) {};
		  	\node[vertex] (C2) at ($(C1)+(0,-\Vdist)$) {};
		  	\node[vertex] (D2) at ($(C2)+(\Hdist,0)$) {};
		  	\node[vertex] (C3) at ($(C2)+(0,-\Vdist)$) {};
		  	\node[vertex] (D3) at ($(C3)+(\Hdist,0)$) {};
		  	
		  	\draw ($(C1)+(90:0.3)$) edge[Hedge,closed,curve through={($(C1)+(30:0.2)$) ($(C2)+(360:0.25)$) ($(C3)+(320:0.2)$) ($(C3)+(270:0.3)$) ($(C3)+(210:0.2)$) ($(C2)+(180:0.25)$)}] ($(C1)+(150:0.2)$);
		  	
		  	\draw ($(D2)+(45:0.2)$) edge[Hedge,blue3,closed,curve through={($(D2)+(360:0.2)$) ($(D3)+(360:0.2)$) ($(D3)+(315:0.2)$) ($(D3)+(270:0.2)$) ($(D2)+(225:1.2)$) ($(C2)+(180:0.2)$) ($(C2)+(135:0.2)$) ($(C2)+(90:0.2)$) }] ($(D2)+(90:0.2)$);
		  	\draw ($(C1)+(90:0.2)$) edge[Hedge,blue4,closed,curve through={($(D1)+(90:0.2)$) ($(D1)+(45:0.2)$) ($(D1)+(360:0.2)$) ($(D2)+(180:0.5)$) ($(C3)+(315:0.2)$) ($(C3)+(270:0.2)$) ($(C3)+(180:0.2)$) ($(C3)+(135:0.2)$) ($(C2)+(360:0.5)$) ($(C1)+(180:0.2)$)}] ($(C1)+(135:0.2)$);
		  	
		  	\draw ($(C1)+(90:0.35)$) edge[Medge,closed,curve through={($(C1)+(30:0.3)$) ($(C2)+(360:0.35)$) ($(C3)+(320:0.3)$) ($(C3)+(270:0.35)$) ($(C3)+(210:0.3)$) ($(C2)+(180:0.35)$)}] ($(C1)+(150:0.3)$);
		  	\draw ($(D1)+(90:0.3)$) edge[Medge,closed,curve through={($(D1)+(30:0.2)$) ($(D2)+(360:0.25)$) ($(D3)+(320:0.2)$) ($(D3)+(270:0.3)$) ($(D3)+(210:0.2)$) ($(D2)+(180:0.25)$)}] ($(D1)+(150:0.2)$);
		  	
		  	
		  	\coordinate (Estart) at ($(D2)+(\Hdist/2,\Vdist/2)$);
		  	\coordinate (Eend) at ($(C3)+(\Hdist/2,-{\Vdist*(2/3)})$);
		  	\node[CutVioletText] (ECutlabel) at ($(C3)+(\Hdist/4,-\Vdist)$) {$\Cut{}{E}$};
		  	\def\CutECurve{%
		  		(Estart).. ($(D2)+(90:\Vdist/2)$) .. ($(D2)+(180:\Hdist/2)$) .. ($(C3)+(360:\Hdist/2)$) .. (Eend)
		  	}
		  	\draw[CutViolet,line width=3pt,opacity=0.7,use Hobby shortcut, closed=false] \CutECurve;
		  	
		  	\node[CutVioletText] (Elabel) at ($(C3)+(-\Hdist/4,-\Vdist/2)$) {$E$};
		  	\node[CutVioletText] (ECompllabel) at ($(D3)+(\Hdist/4,-\Vdist/2)$) {$\Compl{E}$};
		\end{tikzpicture}};
		
		\node (HEC) at ($(HC)+(-2*\Hdist,-4*\Vdist)$) {\begin{tikzpicture}[scale=0.9]
		  	\path[use as bounding box] (3*\Hdist,\Vdist/2) rectangle (5*\Hdist,-2.5*\Vdist);
		  	\node[Brace] at ($(C1)+({\Hdist*(2/3)},{\Vdist*(1/3)})$) {\textcolor{blue4}{$H_{\Compl{E}}$}};
		  	\node[vertex,CutDarkGreen,label={[CutDarkGreenText,label distance=0.8ex]175:$c$}] (c) at ($(C1)$) {};
		  	\node[vertex] (D1) at ($(C1)+(\Hdist,-\Vdist/2)$) {};
		  	\node[vertex] (C2) at ($(C1)+(0,-\Vdist)$) {};
		  	\node[vertex] (C3) at ($(C2)+(0,-\Vdist)$) {};
		  	\node[vertex,CutViolet,label={[CutVioletText,label distance=0.8ex]355:$\Compl{e}$}] (e) at ($(C3)+(\Hdist,\Vdist/2)$) {};
		  	
		  	\draw ($(C1)+(90:0.3)$) edge[Hedge,closed,curve through={($(C1)+(30:0.2)$) ($(C2)+(360:0.25)$) ($(C3)+(320:0.2)$) ($(C3)+(270:0.3)$) ($(C3)+(210:0.2)$) ($(C2)+(180:0.25)$)}] ($(C1)+(150:0.2)$);
		  	
		  	\draw ($(e)+(110:0.3)$) edge[Hedge,blue3,closed,curve through={($(e)+(315:0.3)$) ($(e)+(200:0.3)$) ($(C2)+(260:0.3)$) ($(C2)+(135:0.3)$)}] ($(C2)+(70:0.2)$);
		  	\draw ($(C1)+(90:0.2)$) edge[Hedge,blue4,closed,curve through={($(D1)+(90:0.2)$) ($(D1)+(45:0.2)$) ($(D1)+(360:0.2)$) ($(D2)+(180:0.5)$) ($(C3)+(315:0.2)$) ($(C3)+(270:0.2)$) ($(C3)+(180:0.2)$) ($(C3)+(135:0.2)$) ($(C2)+(360:0.4)$) ($(C1)+(180:0.2)$)}] ($(C1)+(135:0.2)$);
		  	
		  	\draw ($(C1)+(90:0.35)$) edge[Medge,closed,curve through={($(C1)+(30:0.3)$) ($(C2)+(360:0.35)$) ($(C3)+(320:0.3)$) ($(C3)+(270:0.35)$) ($(C3)+(210:0.3)$) ($(C2)+(180:0.35)$)}] ($(C1)+(150:0.3)$);
		  	\draw ($(D1)+(90:0.3)$) edge[Medge,closed,curve through={($(D1)+(30:0.2)$) ($(D2)+(360:0.25)$) ($(e)+(320:0.2)$) ($(e)+(270:0.3)$) ($(e)+(210:0.2)$) ($(D2)+(180:0.25)$)}] ($(D1)+(150:0.2)$);
		\end{tikzpicture}};
		
		\node (HE) at ($(HC)+(0,-4*\Vdist)$) {\begin{tikzpicture}[scale=0.9]
		  	\path[use as bounding box] (4*\Hdist,\Vdist/2) rectangle (5*\Hdist,-2.5*\Vdist);
	  		\node[Brace] at ($(D1)+({\Hdist*(2/3)},{\Vdist*(1/3)})$) {\textcolor{blue4}{$H_{E}$}};
		  	\node[vertex,CutViolet,label={[CutVioletText,label distance=0.8ex]5:$e$}] (e) at ($(C1)+(\Hdist,0)$) {};
		  	\node[vertex] (D2) at ($(C2)+(\Hdist,0)$) {};
		  	\node[vertex] (D3) at ($(C3)+(\Hdist,0)$) {};
		  	
		  	\draw ($(e)+(90:0.3)$) edge[Hedge,closed,curve through={($(e)+(30:0.2)$) ($(D2)+(360:0.25)$) ($(D3)+(320:0.2)$) ($(D3)+(270:0.3)$) ($(D3)+(210:0.2)$) ($(D2)+(180:0.25)$)}] ($(e)+(150:0.2)$);
		  	
		  	\draw ($(e)+(90:0.35)$) edge[Medge,closed,curve through={($(e)+(30:0.3)$) ($(D2)+(360:0.35)$) ($(D3)+(320:0.3)$) ($(D3)+(270:0.35)$) ($(D3)+(210:0.3)$) ($(D2)+(180:0.35)$)}] ($(e)+(150:0.3)$);
		\end{tikzpicture}};
	  	
	  	\draw[thick,->,-latex,shorten <=4mm] (B5) to (HAC);
	  	\draw[thick,->,-latex,shorten <=4mm] ($(C3)-(\Hdist/2,0)$) to (HA);
	  	\draw[thick,->,-latex,shorten <=20mm] ($(HAC)+(-\Hdist,0)$) to (HBC);
	  	\draw[thick,->,-latex,shorten <=2mm] (HAC) to (HB);
	  	\draw[thick,->,-latex,shorten <=20mm] ($(HA)+(-\Hdist,0)$) to (HCC);
	  	\draw[thick,->,-latex,shorten <=2mm] (HA) to (HC);
	  	\draw[thick,->,-latex,shorten <=18mm] ($(HBC)+(-\Hdist/2,0)$) to (HDC);
	  	\draw[thick,->,-latex,shorten <=2mm] (HBC) to (HD);
	  	\draw[thick,->,-latex,shorten <=2mm] (HC) to (HEC);
	  	\draw[thick,->,-latex,shorten <=18mm] ($(HC)+(\Hdist/2,0)$) to (HE);
  	\end{tikzpicture}
  	\caption{A Tight Cut Decomposition of a matching covered hypergraph.}
    \label{fig:tcd}
\end{figure}
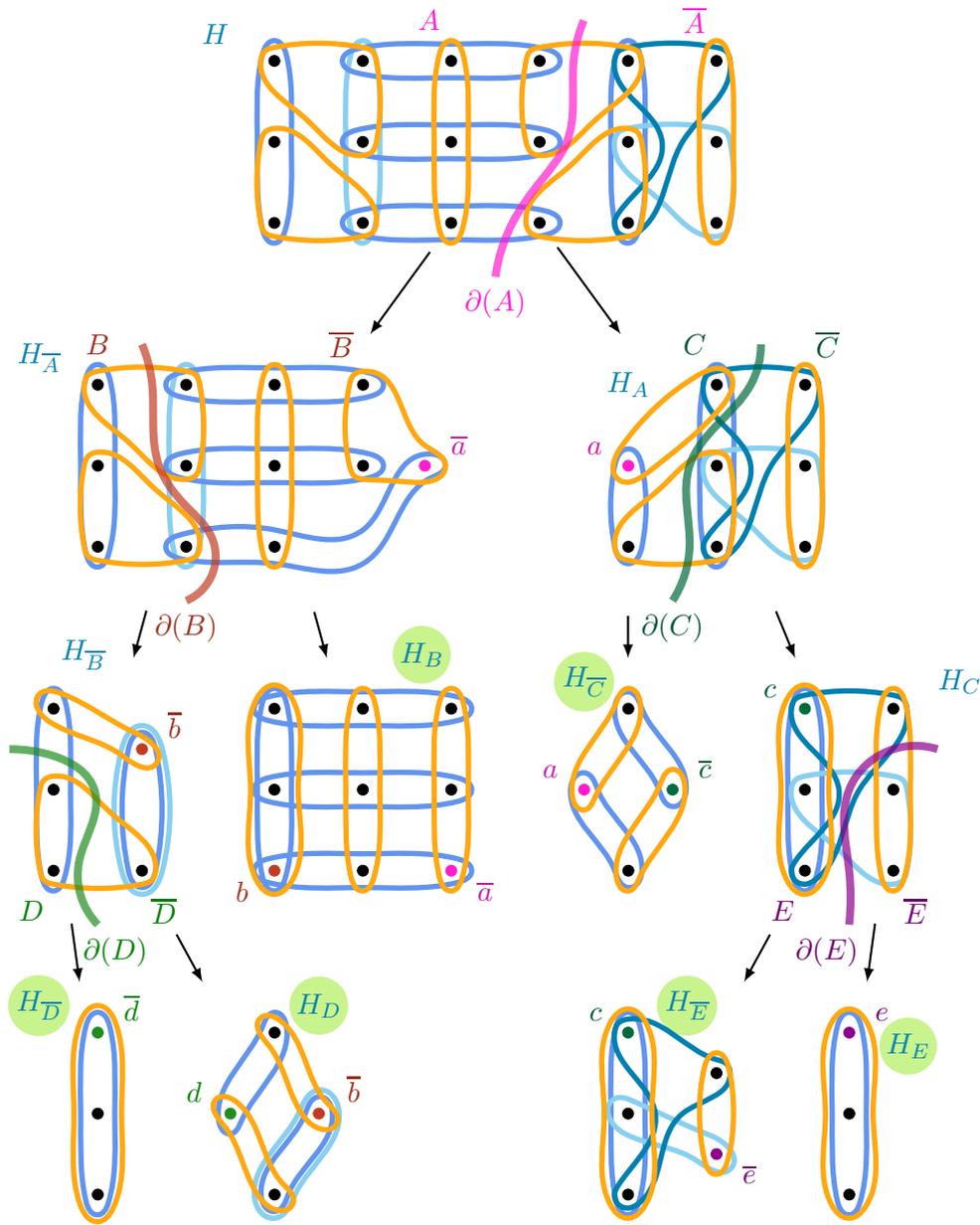  

  We do not specify the concrete choice of the tight cut in \cref{chooseTightcut} of \cref{alg:tcd}.
  Thus, different runs might give different outputs.
  For a hypergraph $H$ we denote the set of tight cut decompositions by $\Fkt{\mathcal{TD}}{H}$.
  We say that two tight cut decompositions $H_1,\ldots, H_i$ and $H'_1,\ldots, H'_j$ are equivalent if and only if there exists a bijection $\phi : [i]\to [j]$ such that $H_{s}$ and $H'_{\Fkt{\phi}{s}}$ are isomorphic up to parallel edges.
  For graphs any two tight cut decompositions are equivalent in this sense which was shown by Lov{\'a}sz \cite{lovasz1987}.
  
  \begin{figure}[t]
  	\centering
  	\begin{tikzpicture}[scale=0.86]
  	\definecolor{CutViolet}{rgb}{0.73, 0.2, 0.52}
  	\definecolor{CutGreen}{rgb}{0.13, 0.55, 0.13}
  	\definecolor{blue3}{rgb}{0.54, 0.81, 0.94}
  	\definecolor{blue2}{rgb}{0.0, 0.5, 1.0} 
  	\definecolor{blue4}{rgb}{0.0, 0.5, 0.69}
  	\node (M) at (0,0) {\begin{tikzpicture}
  		\node[vertex] (x2) at (0,0) {};
  		\node[vertex] (c1) at ($(x2)+({360/5*3}:1)$) {};
  		\node[vertex] (u1) at ($(c1)+({360/5*1.5}:1)$) {};
  		\node[vertex] (u2) at ($(c1)+({360/5*2.5}:1)$) {};
  		\node[vertex] (u3) at ($(c1)+({360/5*3.5}:1)$) {};
  		\node[vertex] (x1) at ($(c1)+({360/5*4.5}:1)$) {};
  		\node[vertex] (c2) at ($(x2)+({360/5*4.5}:1)$) {};
  		\node[vertex] (w1) at ($(c2)+({360/5*1}:1)$) {};
  		\node[vertex] (w2) at ($(c2)+({360/5*0}:1)$) {};
  		\node[vertex] (w3) at ($(c2)+({360/5*4}:1)$) {};
  		\tikzset{HSedge/.style={line width=2pt,draw,blue3}}
  		\draw[HSedge] (c1) to (u1);
  		\draw[HSedge] (c1) to (u2);
  		\draw[HSedge] (c1) to (u3);
  		\draw[HSedge] (c1) to (x1);
  		\draw[HSedge] (c1) to (x2);
  		\draw[HSedge] (c2) to (w1);
  		\draw[HSedge] (c2) to (w2);
  		\draw[HSedge] (c2) to (w3);
  		\draw[HSedge] (c2) to (x1);
  		\draw[HSedge] (c2) to (x2);
  		\draw[HSedge] (x2) to (u1);
  		\draw[HSedge] (u1) to (u2);
  		\draw[HSedge] (u2) to (u3);
  		\draw[HSedge] (u3) to (x1);
  		\draw[HSedge] (x1) to (x2);
  		\draw[HSedge] (x2) to (w1);
  		\draw[HSedge] (w1) to (w2);
  		\draw[HSedge] (w2) to (w3);
  		\draw[HSedge] (w3) to (x1);
  		\draw ($(x2)+(90:0.2)$) edge[Hedge,blue2,closed,curve through={($(x2)+(180:0.2)$) ($(x2)+(270:0.2)$)}] ($(x2)+(360:0.2)$);
  		\draw ($(c1)+(90:0.2)$) edge[Hedge,closed,curve through={($(c1)+(180:0.2)$) ($(u3)+(180:0.2)$) ($(x1)+(270:0.2)$) ($(w3)+(270:0.2)$) ($(w3)+(360:0.2)$) ($(w2)+(360:0.2)$) ($(c2)+(36:1.2)$) ($(w1)+(90:0.2)$) ($(c2)+(144:0.2)$) ($(x2)+(-72:0.9)$) ($(x1)+(90:0.2)$)}] ($(c1)+(45:0.2)$);
  		\definecolor{CutViolet}{rgb}{0.73, 0.2, 0.52}
  		\definecolor{CutGreen}{rgb}{0.13, 0.55, 0.13}
  		
  		\coordinate (Sstart) at ($(x2)+(50:0.7)$);
  		\coordinate (Send) at ($(u3)+({72*4.5}:0.5)$);
  		\node[CutGreen] (SCutlabel) at ($(x2)+(60:0.9)$) {$\Cut{}{S}$};
  		\def\CutSCurve{%
  			(Sstart).. ($(x2)+(36:0.5)$) .. ($(c2)+(144:0.5)$) .. ($(x2)+(270:0.5)$) .. ($(c1)+({72*4.5}:0.5)$) .. ($(c1)+({72*4}:1)$) .. (Send)
  		}
  		\draw[CutGreen,line width=3pt,opacity=0.7,use Hobby shortcut, closed=false] \CutSCurve;
  		
  		\coordinate (Tstart) at ($(x2)+(130:0.7)$);
  		\coordinate (Tend) at ($(w3)+({72*3}:0.5)$);
  		\node[CutViolet] (TCutlabel) at ($(x2)+(120:0.9)$) {$\Cut{}{T}$};
  		\def\CutTCurve{%
  			(Tstart).. ($(x2)+(144:0.5)$) .. ($(c1)+(36:0.5)$) .. ($(x2)+(270:0.5)$) .. ($(c2)+({72*3}:0.5)$) .. ($(c2)+({72*3.5}:1)$) .. (Tend)
  		}
  		\draw[CutViolet,line width=3pt,opacity=0.7,use Hobby shortcut, closed=false] \CutTCurve;
  		\node[fill=white,circle,opacity=0.6,inner sep=2.7pt] (x2label) at ($(x2)+(135:0.2)$) {};
  		\node[fill=white,circle,opacity=0.6,inner sep=2.7pt] (x1label) at ($(x1)+(135:0.2)$) {};
  		\node at ($(x2)+(135:0.2)$) {$v$};
  		\node at ($(x1)+(135:0.2)$) {$w$};
  		\end{tikzpicture}};
  	\node (HSQ) at ($(M)+(-5,-2)$) {\begin{tikzpicture}
  		\node[vertex] (x2) at (0,0) {}; 
  		\node[vertex] (c1) at ($(x2)+({360/5*3}:1)$) {};
  		\node[vertex] (u1) at ($(c1)+({360/5*1.5}:1)$) {};
  		\node[vertex] (u2) at ($(c1)+({360/5*2.5}:1)$) {};
  		\node[vertex] (u3) at ($(c1)+({360/5*3.5}:1)$) {};
  		\node[vertex,CutGreen] (x1) at ($(c1)+({360/5*4.5}:1)$) {}; 
  		\tikzset{HSedge/.style={line width=2pt,draw,blue3}}
  		\draw[HSedge] (c1) to (u1);
  		\draw[HSedge] (c1) to (u2);
  		\draw[HSedge] (c1) to (u3);
  		\draw[HSedge] (c1) to (x1);
  		\draw[HSedge] (c1) to (x2);
  		\draw[HSedge] (x2) to (u1);
  		\draw[HSedge] (u1) to (u2);
  		\draw[HSedge] (u2) to (u3);
  		\draw[HSedge] (u3) to (x1);
  		\draw[HSedge] (x1) to (x2);
  		\draw ($(x2)+(90:0.2)$) edge[Hedge,blue2,closed,curve through={($(x2)+(180:0.2)$) ($(x2)+(270:0.2)$)}] ($(x2)+(360:0.2)$);
  		\draw ($(c1)+(72:0.2)$) edge[Hedge,closed,curve through={($(x1)+(72:0.2)$) ($(x1)+(360:0.2)$) ($(x1)+(270:0.2)$) ($(u3)+(270:0.2)$) ($(u3)+(180:0.2)$) ($(u3)+(144:0.2)$)}] ($(c1)+(144:0.2)$);
  		
  		\node[fill=white,circle,opacity=0.6,inner sep=2.7pt] (x2label) at ($(x2)+(135:0.2)$) {};
  		\node at ($(x2)+(135:0.2)$) {$v$};
  		\node (Label) at ($(c1)+(135:1.5)$) {$H_{\Compl{S}}$};
  		\end{tikzpicture}};
  	\node (HT) at ($(M)+(-5,2)$) {\begin{tikzpicture}
  		\node[vertex,CutViolet] (x2) at (0,0) {}; 
  		\node[vertex] (c1) at ($(x2)+({360/5*3}:1)$) {};
  		\node[vertex] (u1) at ($(c1)+({360/5*1.5}:1)$) {};
  		\node[vertex] (u2) at ($(c1)+({360/5*2.5}:1)$) {};
  		\node[vertex] (u3) at ($(c1)+({360/5*3.5}:1)$) {};
  		\node[vertex] (x1) at ($(c1)+({360/5*4.5}:1)$) {}; 
  		\tikzset{HSedge/.style={line width=2pt,draw,blue3}}
  		\draw[HSedge] (c1) to (u1);
  		\draw[HSedge] (c1) to (u2);
  		\draw[HSedge] (c1) to (u3);
  		\draw[HSedge] (c1) to (x1);
  		\draw[HSedge] (c1) to (x2);
  		\draw[HSedge] (x2) to (u1);
  		\draw[HSedge] (u1) to (u2);
  		\draw[HSedge] (u2) to (u3);
  		\draw[HSedge] (u3) to (x1);
  		\draw[HSedge] (x1) to (x2);
  		\draw ($(c1)+(90:0.2)$) edge[Hedge,closed,curve through={($(c1)+(180:0.2)$) ($(u3)+(180:0.2)$) ($(u3)+(270:0.2)$) ($(x1)+(270:0.2)$) ($(x1)+(360:0.2)$) ($(x2)+(360:0.2)$)}] ($(x2)+(90:0.2)$);
  		
  		\node[fill=white,circle,opacity=0.6,inner sep=2.7pt] (x1label) at ($(x1)+(135:0.2)$) {};
  		\node at ($(x1)+(135:0.2)$) {$w$};
  		\node (Label) at ($(c1)+(135:1.5)$) {$H_{T}$};
  		\end{tikzpicture}};
  	\node (HS) at ($(M)+(5,-2)$) {\begin{tikzpicture}
  		\node[vertex,CutGreen] (x2) at (0,0) {}; 
  		\node[vertex] (x1) at ($(c1)+({360/5*4.5}:1)$) {}; 
  		\node[vertex] (c2) at ($(x2)+({360/5*4.5}:1)$) {};
  		\node[vertex] (w1) at ($(c2)+({360/5*1}:1)$) {};
  		\node[vertex] (w2) at ($(c2)+({360/5*0}:1)$) {};
  		\node[vertex] (w3) at ($(c2)+({360/5*4}:1)$) {};
  		\tikzset{HSedge/.style={line width=2pt,draw,blue3}}
  		\draw[HSedge] (c2) to (w1);
  		\draw[HSedge] (c2) to (w2);
  		\draw[HSedge] (c2) to (w3);
  		\draw[HSedge] (c2) to (x1);
  		\draw[HSedge] (c2) to (x2);
  		\draw[HSedge] (x1) to (x2);
  		\draw[HSedge] (x2) to (w1);
  		\draw[HSedge] (w1) to (w2);
  		\draw[HSedge] (w2) to (w3);
  		\draw[HSedge] (w3) to (x1);
  		\draw ($(x2)+({72*2}:0.2)$) edge[Hedge,closed,curve through={ ($(c2)+({72*2.5}:1.1)$) ($(x1)+({72*3}:0.2)$) ($(c2)+({72*3.5}:1.1)$) ($(w3)+({72*4}:0.2)$) ($(c2)+({72*4.5}:1.1)$) ($(w2)+({72*0}:0.2)$) ($(c2)+({72*0.5}:1.1)$) ($(w1)+({72*1}:0.2)$)}] ($(c2)+({72*1.5}:1.1)$);
  		
  		\node[fill=white,circle,opacity=0.6,inner sep=2.7pt] (x1label) at ($(x1)+(135:0.2)$) {};
  		\node at ($(x1)+(135:0.2)$) {$w$};
  		\node (Label) at ($(c2)+(45:1.5)$) {$H_{S}$};
  		\end{tikzpicture}};
  	\node (HTQ) at ($(M)+(5,2)$) {\begin{tikzpicture}
  		\node[vertex] (x2) at (0,0) {}; 
  		\node[vertex,CutViolet] (x1) at ($(c1)+({360/5*4.5}:1)$) {}; 
  		\node[vertex] (c2) at ($(x2)+({360/5*4.5}:1)$) {};
  		\node[vertex] (w1) at ($(c2)+({360/5*1}:1)$) {};
  		\node[vertex] (w2) at ($(c2)+({360/5*0}:1)$) {};
  		\node[vertex] (w3) at ($(c2)+({360/5*4}:1)$) {};
  		\tikzset{HSedge/.style={line width=2pt,draw,blue3}}
  		\draw[HSedge] (c2) to (w1);
  		\draw[HSedge] (c2) to (w2);
  		\draw[HSedge] (c2) to (w3);
  		\draw[HSedge] (c2) to (x1);
  		\draw[HSedge] (c2) to (x2);
  		\draw[HSedge] (x1) to (x2);
  		\draw[HSedge] (x2) to (w1);
  		\draw[HSedge] (w1) to (w2);
  		\draw[HSedge] (w2) to (w3);
  		\draw[HSedge] (w3) to (x1);
  		\draw ($(x2)+(90:0.2)$) edge[Hedge,blue2,closed,curve through={($(x2)+(180:0.2)$) ($(x2)+(270:0.2)$)}] ($(x2)+(360:0.2)$);
  		\draw ($(c2)+({72*2}:0.2)$) edge[Hedge,closed,curve through={ ($(c2)+({72*2.5}:0.5)$) ($(x1)+({72*3}:0.2)$) ($(c2)+({72*3.5}:1.1)$) ($(w3)+({72*4}:0.2)$) ($(c2)+({72*4.5}:1.1)$) ($(w2)+({72*0}:0.2)$) ($(c2)+({72*0.5}:1.1)$) ($(w1)+({72*1}:0.2)$)}] ($(c2)+({72*1.5}:0.5)$);
  		
  		\node[fill=white,circle,opacity=0.6,inner sep=2.7pt] (x2label) at ($(x2)+(135:0.2)$) {};
  		\node at ($(x2)+(135:0.2)$) {$v$};
  		\node (Label) at ($(c2)+(45:1.5)$) {$H_{\Compl{T}}$};
  		\end{tikzpicture}};
  	\draw[thick,->,-latex] (M) to (HS);
  	\draw[thick,->,-latex] (M) to (HSQ);
  	\draw[thick,->,-latex] (M) to (HT);
  	\draw[thick,->,-latex] (M) to (HTQ);
  \end{tikzpicture}
  \caption{An example for the non-uniqueness of tight cut decompositions in non-\uniformable hypergraphs}
  \label{fig:nonunique}
\end{figure}
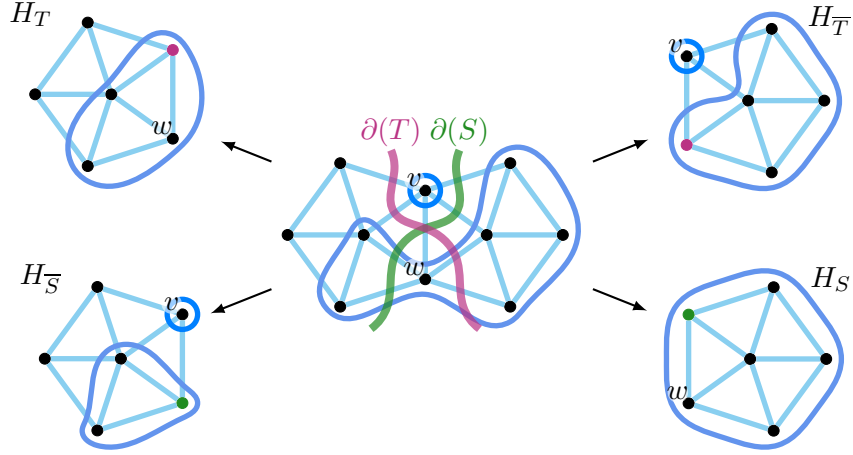
  
  This important result does not hold for general hypergraphs.
  A counterexample is depicted in \cref{fig:nonunique}.
  In its center it shows a matching covered hypergraph $H$ and two tight cuts $\Cut{H}{S}$, $\Cut{H}{T}$.
  The two hypergraphs obtained by contracting $S$ and $\Compl{S}$ are depicted below $H$ and the ones obtained by contracting $T$ and $\Compl{T}$ above $H$.
  The resulting hypergraphs $H_S$, $H_{\Compl{S}}$, $H_T$, $H_{\Compl{T}}$ have only trivial tight cuts, and they are not equivalent in the sense described above.
  Thus, $H_S,H_{\Compl{S}}$ and $H_T, H_{\Compl{T}}$ are two non-equivalent tight cut decompositions of $H$.
  
  So in general we cannot hope for a theorem showing uniqueness of the tight cut decomposition for hypergraphs.
  However, the hypergraph in \cref{fig:nonunique} is not \uniformable.
  What follows next is a series of lemmas analysing the properties of tight cuts and their contractions on \uniformable hypergraphs.
  Eventually this analysis will show that no such counterexample can be \uniformable.
  In a first step of our analysis, we show that the tight cuts of a tight cut contraction correspond to tight cuts in the original hypergraph. 
  
  \begin{lemma}
   	\label{lemma:tightcut}
   	Let $H $ be a matching covered hypergraph with a non-trivial tight cut $\Cut{H}{S}$ defined by $S \subseteq \Fkt{V}{H}$.
   	For $T \subseteq S$ we have that $\Cut{H}{T}$ is a tight cut in $H$ if and only if $\Cut{H_{\Compl{S}}}{T}$ is a tight cut in $H_{\Compl{S}}$.
   	Similarly, $T\subseteq \Compl{S}$ defines a tight cut in $H$ if and only if it defines a tight cut in $H_S$.
   \end{lemma}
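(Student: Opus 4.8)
The plan is to derive the lemma from the bijective description of perfect matchings in \cref{prop:matcoveredtight} together with a direct count of how the edges of $\Cut{H}{T}$ and $\Cut{H_{\Compl{S}}}{T}$ meet a perfect matching. For a perfect matching $M$ of $H$ let $e$ be the unique edge in $M\cap\Cut{H}{S}$ (unique since $\Cut{H}{S}$ is tight) and let $M_{\Compl{S}}=\Set{e'\in M\mid e'\subseteq S}\cup\Set{e_{\Compl{s}}}$ be the associated perfect matching of $H_{\Compl{S}}$ from \cref{prop:matcoveredtight}. First I would record that $M\mapsto M_{\Compl{S}}$ is surjective onto the perfect matchings of $H_{\Compl{S}}$: given a perfect matching $N$ of $H_{\Compl{S}}$, its unique edge through $\Compl{s}$ is some $e_{\Compl{s}}$ with $e\in\Cut{H}{S}$, and since $H_S$ is matching covered (again \cref{prop:matcoveredtight}) there is a perfect matching $M_S$ of $H_S$ using $e_s$; the pair $(M_S,N)$ agrees on $\Cut{H}{S}$ and hence arises from a perfect matching $M$ of $H$ with $M_{\Compl{S}}=N$.

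The core step is the identity $\Abs{M\cap\Cut{H}{T}}=\Abs{M_{\Compl{S}}\cap\Cut{H_{\Compl{S}}}{T}}$ for every perfect matching $M$ of $H$. To prove it I would split $M$ into its edges contained in $S$, its edges contained in $\Compl{S}$, and the single crossing edge $e$, and test each against $T\subseteq S$. Edges contained in $\Compl{S}$ miss $T$ entirely and contribute to neither cut. An edge $e'\subseteq S$ lies in $\Cut{H}{T}$ exactly when it meets both $T$ and $S\setminus T$, and, since $\Compl{s}\notin e'$, this is precisely the condition for $e'\in\Cut{H_{\Compl{S}}}{T}$; hence these edges contribute equally to both sides. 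Finally, the crossing edge $e$ meets $\Compl{S}\subseteq\Compl{T}$, so $e\in\Cut{H}{T}$ iff $e\cap T\neq\emptyset$; correspondingly $e_{\Compl{s}}=\Brace{e\cap S}\cup\Set{\Compl{s}}$ already meets the complement of $T$ through $\Compl{s}$, so $e_{\Compl{s}}\in\Cut{H_{\Compl{S}}}{T}$ iff $\Brace{e\cap S}\cap T=e\cap T\neq\emptyset$, using $T\subseteq S$. Thus $e$ and $e_{\Compl{s}}$ contribute the same indicator to each count, and the identity follows.

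With the identity in hand both implications are immediate. If $\Cut{H}{T}$ is tight in $H$, then for any perfect matching $N$ of $H_{\Compl{S}}$ I choose (by surjectivity) a perfect matching $M$ of $H$ with $M_{\Compl{S}}=N$ and obtain $\Abs{N\cap\Cut{H_{\Compl{S}}}{T}}=\Abs{M\cap\Cut{H}{T}}=1$, so $\Cut{H_{\Compl{S}}}{T}$ is tight. Conversely, if $\Cut{H_{\Compl{S}}}{T}$ is tight, then for any perfect matching $M$ of $H$ the identity gives $\Abs{M\cap\Cut{H}{T}}=\Abs{M_{\Compl{S}}\cap\Cut{H_{\Compl{S}}}{T}}=1$, so $\Cut{H}{T}$ is tight. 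The statement for $T\subseteq\Compl{S}$ and $H_S$ follows by the symmetric argument, exchanging the roles of $S$ and $\Compl{S}$.

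I expect the only delicate point to be the bookkeeping of the edge correspondence — in particular handling the crossing edge $e$ and its image $e_{\Compl{s}}$ with the correct indicator — and checking the surjectivity of $M\mapsto M_{\Compl{S}}$; once these are pinned down the argument is routine counting built on \cref{prop:matcoveredtight}. Note that the argument uses only that $H$ is matching covered, matching the hypothesis of the lemma, and does not require \uniformability.
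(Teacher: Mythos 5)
Your proposal is correct and follows essentially the same route as the paper: both rely on the correspondence between perfect matchings of $H$ and pairs of perfect matchings of $H_S$, $H_{\Compl{S}}$ agreeing on $\Cut{H}{S}$ from \cref{prop:matcoveredtight}, and both establish the equality $\Abs{M\cap\Cut{H}{T}}=\Abs{M_{\Compl{S}}\cap\Cut{H_{\Compl{S}}}{T}}$ by checking, edge by edge, that membership in the two cuts is preserved under the contraction. Your organisation (one counting identity plus surjectivity of $M\mapsto M_{\Compl{S}}$, from which both implications follow) is a slightly cleaner packaging of the same argument, and your observation that only the matching covered hypothesis is needed matches the paper.
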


   \begin{proof}
  	 We only prove the claim for $T\subseteq S$ since the other case is symmetric by interchanging $S$ and $\Compl{S}$.
  	 
  	 First, we show that every set $T\subseteq S$ that defines a tight cut in $H$ also defines a tight cut in $H_{\Compl{S}}$.
   	The hypergraph $H_{\Compl{S}}$ contains all $e \in \Fkt{E}{H}$ with $e \subseteq S$ and for every $e\in \Cut{H}{S}$ it contains the edge $e_{\Compl{s}} = \Brace{e \cap S} \cup \Set{\Compl{s}}$ where $\Compl{s}$ is a new vertex representing $\Compl{S}$.
   	Then 
   	\begin{equation*}
   		\Cut{H_{\Compl{S}}}{T} = \CondSet{e \in \Fkt{E}{H}}{e \subseteq S, e \cap T \neq \emptyset, e \setminus T \neq 
   		\emptyset} \cup \CondSet{e_{\Compl{s}}}{e \in \Cut{H}{S}, e \cap T \neq \emptyset}.
   	\end{equation*}
   	Let $M_1$ be a perfect matching of $H_{\Compl{S}}$.
   	There exists a unique edge $f \in M_1\setminus  \Fkt{E}{H}$, and this edge is of the form $f = m_{\Compl{s}}$ for some $m\in \Cut{H}{S}$.
   	Furthermore, there exists a perfect matching $M$ containing $m$ such that $M_1=\CondSet{e\in M}{e\subseteq S}\cup \Set{f}$.
   	As $\Cut{H}{T}$ is a tight cut, there exists a unique edge $m' \in \Cut{H}{T}\cap M$.
   	This edge has a non-empty intersection with $S$ because $T\subseteq S$ and $T\cap m'\neq \emptyset$.
   	Thus, $m'$ is either a subset of $S$ or it lies in the cut $\Cut{H}{S}$.
   	
   	In the first case, $m'$ is an edge of $H_{\Compl{S}}$ with $m' \in \Cut{H_{\Compl{S}}}{T} \cap M_1$, and in the latter $m'_{\Compl{s}} = m_{\Compl{s}} \in \Cut{H_{\Compl{S}}}{T} \cap M_1$.
   	Thus, $\Abs{\Cut{H_{\Compl{S}}}{T}\cap M_1}\geq \Abs{\Cut{H}{T} \cap M}$ holds.
   	On the other hand, if $e \in \Cut{H_{\Compl{S}}}{T} \cap M_1$, then either $e \subseteq S$ and $e \in \Cut{H}{T} \cap M$ or $e = m'_{\Compl{s}}$ for some $m' \in \Cut{H}{T} \cap M$.
   	In total we get $\Abs{\Cut{H_{\Compl{S}}}{T} \cap M_1} = \Abs{\Cut{H}{T} \cap M} = 1$, and thus $T$ also defines a tight cut in $H_{\Compl{S}}$.
   	
  	Second, suppose $T \subseteq S$ defines a tight cut in $H_{\Compl{S}}$.
  	We claim that $T$ also defines a tight cut in $H$.
  	Therefore, we have to show that $\Abs{M\cap \Cut{H}{T}}=1$ for all perfect matchings $M$ of $H$.
  	Given a perfect matching $M$ of $H$, let $M_1$ be the unique perfect matching of $H_{\Compl{S}}$ such that $M_1 = \CondSet{e \in M}{e \subseteq S} \cup \Set{m_{\Compl{s}}}$ where $m$ is the unique edge in $\Cut{H}{S}\cap M$.
  	Using similar arguments as above, it follows that $\Abs{\Cut{H}{S}\cap M} = \Abs{\Cut{H_{\Compl{S}}}{T}\cap M_1} = 1$.
  	Thus, $\Cut{H}{T}$ is a tight cut.
  \end{proof}

  We say that two sets $S,T$ \emph{cross} if all four of the following sets $S\cap T, S\cap \Compl{T}, \Compl{S} \cap T, \Compl{S} \cap \Compl{T}$ are non-empty, otherwise $S,T$ are called \emph{laminar}.
  It is sometimes more convenient to talk about the cuts $\Cut{}{S}$ and $\Cut{}{T}$.
  The cuts are called \emph{crossing}, if $S$ and $T$ cross, and otherwise they are called \emph{laminar}.
  
  With the help of the previous lemma we can show that every tight cut decomposition of a hypergraph corresponds to a maximal family of pairwise laminar non-trivial tight cuts.
  
  \begin{corollary}\label{cor:decompandlaminarfamily}
  Let $H$ be a matching covered hypergraph.
  Every tight cut decomposition $\Set{H_1,\ldots, H_i}$ of $H$ corresponds to a maximal family 
  \begin{equation*}
  	\mathcal F\subseteq \CondSet{S\subseteq \Fkt{V}{H}}{\Cut{H}{S}\text{ is a non-trivial tight cut.}}
  \end{equation*}
   such that $S,T\in \mathcal F$ implies that $S$ and $T$ are laminar.
  \end{corollary}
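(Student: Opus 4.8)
The plan is to read $\mathcal F$ directly off a run of \cref{alg:tcd}. I would view the run as a rooted binary tree whose root is $H$, whose internal nodes are the hypergraphs that get split, and where the two children of a node $K$ split along a cut $\Cut{K}{A}$ are $K_A$ and $K_{\Compl{A}}$; the leaves are the returned hyperbricks. The basic bookkeeping observation is that every contraction keeps all of $\Fkt{V}{H}$: a vertex of a node $K$ is either an original vertex of $H$ or a contraction vertex standing for a set of original vertices, so the vertices of $K$ induce a partition of $\Fkt{V}{H}$ into \emph{blocks}. Writing $\sigma_K(v)\subseteq\Fkt{V}{H}$ for the block represented by $v$, and $S^\ast=\bigcup_{v\in S}\sigma_K(v)$ for a shore $S\subseteq\Fkt{V}{K}$, one has $\Compl{S}^\ast=\Fkt{V}{H}\setminus S^\ast$. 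I define $\mathcal F$ to be the set of lifted shores $A^\ast$ of the cuts $\Cut{K}{A}$ chosen at the internal nodes; since a decomposition into $i$ hyperbricks has $i-1$ internal nodes, $\Abs{\mathcal F}=i-1$, which gives the claimed correspondence.

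To see that each $A^\ast\in\mathcal F$ is a non-trivial tight cut of $H$, I would iterate \cref{lemma:tightcut} up the tree. If $K'=K_A$ is a child of $K$, then a tight cut of $K'$ either avoids the new vertex $a$, so its shore lies in $\Compl{A}$ and \cref{lemma:tightcut} lifts it to a tight cut of $K$, or it contains $a$, in which case its complementary shore lies in $\Compl{A}$ and the same lemma applies. Walking from the node at which $\Cut{K}{A}$ was chosen up to the root therefore lifts it to $\Cut{H}{A^\ast}$. Non-triviality survives because lifting only enlarges both shores: each vertex expands to a non-empty block, so $\Abs{A},\Abs{\Compl{A}}\ge 2$ forces $\Abs{A^\ast},\Abs{\Compl{A}^\ast}\ge 2$.

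Laminarity I would obtain from the fact that a contracted block is never split again. After passing to $K_A$, the set $A^\ast$ is represented by the single vertex $a$, and in every further descendant $A^\ast$ stays inside one block; hence any cut chosen below $K_A$ has lift $R^\ast$ with $A^\ast\subseteq R^\ast$ or $A^\ast\cap R^\ast=\emptyset$, and symmetrically $\Compl{A}^\ast$ is atomic below $K_{\Compl{A}}$. Given two chosen cuts with lifts $S^\ast,R^\ast$ and lowest common ancestor $P$ with cut $A$: if one node is an ancestor of the other, this atomicity gives one of the four laminarity containments at once; if they lie in the two subtrees $P_A$ and $P_{\Compl{A}}$, then $S^\ast\subseteq\Compl{A}^\ast$ or $A^\ast\subseteq S^\ast$ (from $A^\ast$ atomic above $S$) and $R^\ast\subseteq A^\ast$ or $\Compl{A}^\ast\subseteq R^\ast$ (from $\Compl{A}^\ast$ atomic above $R$), and the four combinations yield respectively $S^\ast\cap R^\ast=\emptyset$, $S^\ast\subseteq R^\ast$, $R^\ast\subseteq S^\ast$, or $S^\ast\cup R^\ast=\Fkt{V}{H}$. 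So $\mathcal F$ is laminar.

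The step I expect to be the crux is \textbf{maximality}. Suppose some $R\subseteq\Fkt{V}{H}$ defines a non-trivial tight cut of $H$ distinct from every cut in $\mathcal F$ and laminar with all of them; I would derive a contradiction by descending the tree guided by $R$, maintaining the invariant that at the current node $K$ the set $R$ is a union of blocks and $R_K=\CondSet{v\in\Fkt{V}{K}}{\sigma_K(v)\subseteq R}$ defines a non-trivial tight cut of $K$. This holds at the root. At an internal node $K$ with cut $A$, laminarity of $R$ with $A^\ast$ in $\Fkt{V}{H}$ transfers to laminarity of $R_K$ with $A$ in $\Fkt{V}{K}$ (the block-lift is a bijection on unions of blocks preserving $\subseteq$, $\cap$, and complement), and since $R_K\neq A,\Compl{A}$ one of the strict containments $R_K\subseteq A$, $R_K\subseteq\Compl{A}$, $A\subseteq R_K$, $\Compl{A}\subseteq R_K$ holds. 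In each case the appropriate shore of $R_K$ lies wholly inside $A$ or inside $\Compl{A}$, so \cref{lemma:tightcut} keeps it a tight cut in the corresponding child $K_{\Compl{A}}$ or $K_A$, and strictness keeps it non-trivial there while it remains a union of blocks. Descending to a leaf $H_\ell$ then exhibits a non-trivial tight cut of $H_\ell$, contradicting that $H_\ell$ is returned by \cref{alg:tcd}. The delicate points are choosing the correct shore at each step so that \cref{lemma:tightcut} applies (its hypothesis requires a shore contained in one side of $A$), and verifying that strict containment prevents the cut from degenerating to a trivial one after contraction.
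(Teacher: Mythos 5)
Your proposal is correct and takes essentially the same route as the paper: both arguments rest on \cref{lemma:tightcut} to transfer non-trivial tight cuts between a hypergraph and its contractions, and both identify the laminar family with the shores chosen along the recursion tree of \cref{alg:tcd}. The paper packages this as an induction on the number of hyperbricks and is terse about laminarity and maximality, whereas your explicit block-lifting and tree-descent arguments spell out exactly those details.
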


  \begin{proof}
  We prove this by induction on the number of hyperbricks in a tight cut decomposition $\Set{H_1,\ldots, H_i}$.
  If $i\in\Set{1,2}$, the assertion is true as $i=1$ implies that $H$ itself is a hyperbrick and thus $\mathcal{F}$ is empty.
  The case $i=2$ implies that $H_1$ and $H_2$ are both hyperbricks, obtained by the tight cut contractions of a single non-trivial tight cut $\Cut{}{S}$ and thus, by \cref{lemma:tightcut}, every tight cut in $H$ crosses $\Cut{}{S}$.
  
  Suppose that $i\geq 3$.
  Let $\Set{H_1,\ldots,H_i}$ be ordered such that there exists a tight cut $\Cut{}{S}$ where $H_i=H_{\Compl{S}}$ and $\Set{H_1,\ldots,H_{i-1}}$ is a tight cut decomposition of $H_S$.
  To see the existence consider the tree of tight cut contractions constructed by the tight cut decomposition procedure and use \cref{lemma:tightcut} to reorder the cuts chosen during its application in the desired way.
  Now, by induction, $\Set{H_1,\ldots,H_{i-1}}$ corresponds to a maximal family $\mathcal{F}'$ of pairwise laminar tight cuts in $H_S$.
  Let $\mathcal{F} \coloneqq \mathcal{F}' \cup \Set{S}$ and since $H_{\Compl{S}}$ is a hyperbrick by assumption, \cref{lemma:tightcut} yields the maximality of $\mathcal{F}$.
  \end{proof}
  
An important property of tight cuts in graphs is that crossing cuts can be uncrossed.
This means that if $\Cut{}{S}, \Cut{}{T}$ are two crossing tight cuts such that $S\cap T$ is of odd size, then also $\Cut{}{S\cap T}$ and $\Cut{}{S\cup T}$ are tight.
This uncrossing is not possible in general hypergraphs.
However, if we restrict our attention to \uniformable ones, similar results hold.

\begin{figure}[!h]
	\centering
	\begin{tikzpicture}[scale=0.65]
		\node (B) at (0,0) {};
		\def\OuterCurve{%
			($(B)+(90:3)$) .. ($(B)+(180:3.5)$) ..
			($(B)+(270:3)$) .. ($(B)+(360:3.5)$) .. ($(B)+(90:3)$)
		}
		\def\GraphCurve{%
			($(B)+(90:2)$) .. ($(B)+(135:2.5)$) .. ($(B)+(180:3)$) ..
			($(B)+(225:2.7)$) .. ($(B)+(270:1.8)$) .. ($(B)+(315:2.4)$) ..
			($(B)+(360:2.4)$) .. ($(B)+(45:2.7)$)
		}
		
		\coordinate (Upper) at ($(B)+(90:3)$);
		\coordinate (Lower) at ($(B)+(270:3)$);
		\coordinate (Left) at ($(B)+(180:3.5)$);
		\coordinate (Right) at ($(B)+(360:3.5)$);
		\def\CutCurveS{%
			(Upper) to[out=-40, in=100] ($(B)+(87:1)$) to[out=280, in=110] (Lower)
		}
		\def\CutCurveT{%
			(Left) to[out=30, in=190] ($(B)+(176:1)$) to[out=10, in=193] (Right)
		}
		\def\AreaSC{%
			\CutCurveS
      			-- (current bounding box.south east)
      			-- (current bounding box.north east)
      			-- cycle
      			}
      	\def\AreaS{%
			\CutCurveS
      			-- (current bounding box.south west)
      			-- (current bounding box.north west)
      			-- cycle
      			}
      	\def\AreaT{%
			\CutCurveT
      			-- (current bounding box.south east)
      			-- (current bounding box.south west)
      			-- cycle
      			}
      	\def\AreaTC{%
			\CutCurveT
      			-- (current bounding box.north east)
      			-- (current bounding box.north west)
      			-- cycle
      			}
		
		\begin{scope}
    		\clip[use Hobby shortcut, closed=true] \GraphCurve;
    		\begin{scope}
      			\clip \AreaS;
      			\clip \AreaT;
      			\fill[myLightBlue!60,opacity=0.5] (current bounding box.south west) rectangle (current bounding box.north east);
    		\end{scope}
    		\begin{scope}
      			\clip \AreaSC;
      			\clip \AreaTC;
      			\fill[myLightBlue!60,opacity=0.5] (current bounding box.south west) rectangle (current bounding box.north east);
    		\end{scope}
    		\begin{scope}
      			\clip \AreaSC;
      			\clip \AreaT;
      			\fill[myOrange!50,opacity=0.5] (current bounding box.south west) rectangle (current bounding box.north east);
    		\end{scope}
    		\begin{scope}
      			\clip \AreaS;
      			\clip \AreaTC;
      			\fill[myOrange!50,opacity=0.5] (current bounding box.south west) rectangle (current bounding box.north east);
    		\end{scope}
  		\end{scope}

		\draw[line width=1pt,gray,use Hobby shortcut,closed=true] \GraphCurve;
		\draw[myGreen,line width=2pt,use Hobby shortcut, closed=false] \CutCurveS;
		
		\draw[myViolet,line width=2pt,use Hobby shortcut, closed=false] \CutCurveT;
		\node[myGreen] (S) at ($(Upper)+(-1,0)$) {$S$};
		\node[myGreen] (SC) at ($(Upper)+(1,0)$) {$\Compl{S}$};
		\node[myViolet] (T) at ($(Right)+(0,-1)$) {$T$};
		\node[myViolet] (TC) at ($(Right)+(0,1)$) {$\Compl{T}$};
		
		\node (ST) at ($(B)+(220:1.5)$) {$S \cap T$};
		\node (STC) at ($(B)+(140:1.5)$) {$S \cap \Compl{T}$};
		\node (SCT) at ($(B)+(-40:1.5)$) {$\Compl{S} \cap T$};
		\node (SCTC) at ($(B)+(40:1.5)$) {$\Compl{S} \cap \Compl{T}$};

		\end{tikzpicture}
		\caption{Two crossing tight cuts $\Cut{}{S}$ and $\Cut{}{T}$.}
		\label{fig:crossing}
\end{figure}

\begin{lemma}\label{lem:uncrossing}
Let $H$ be a matching covered, \uniformable hypergraph, $S,T\subseteq \Fkt{V}{H}$ be crossing sets such that $\Cut{H}{S}$ and $\Cut{H}{T}$ are tight.
The cut $\Cut{H}{S\cap T}$ is tight if and only if $\Cut{H}{S\cup T}$ is tight.
\end{lemma}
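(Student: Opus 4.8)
The plan is to reduce the statement to a single submodular inequality for the hypergraph cut function, combined with the residue obstruction supplied by the second part of \cref{obs:tightcut}. First I would record the edge-wise submodularity of cuts, which survives the passage to hyperedges: for any (hyper)edge $e$ and any $X,Y\subseteq \Fkt{V}{H}$, the indicator that $e$ crosses $X$ plus the indicator that $e$ crosses $Y$ is at least the sum of the indicators for $X\cap Y$ and $X\cup Y$. One verifies this by cases on how many of $X\cap Y$, $X\cup Y$ are crossed by $e$: whenever $e$ crosses $X\cap Y$ it meets both $X$ and $Y$ and also has a vertex outside one of them, so it crosses both; and a vertex of $e$ outside $X\cup Y$ together with a vertex inside forces a crossing of whichever of $X,Y$ that $e$ meets. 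Summing over the edges of a perfect matching $M$ and using that $\Cut{H}{S}$ and $\Cut{H}{T}$ are tight gives, for every perfect matching $M$,
\[
	\Abs{M\cap \Cut{H}{S\cap T}} + \Abs{M\cap \Cut{H}{S\cup T}} \leq \Abs{M\cap \Cut{H}{S}} + \Abs{M\cap \Cut{H}{T}} = 2.
\]

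By complementation it then suffices to prove a single implication, say that tightness of $\Cut{H}{S\cap T}$ forces tightness of $\Cut{H}{S\cup T}$. Indeed $\Cut{H}{X}=\Cut{H}{\Compl{X}}$, the sets $\Compl{S},\Compl{T}$ cross and also define tight cuts, and $\Compl{S}\cap\Compl{T}=\Compl{S\cup T}$, $\Compl{S}\cup\Compl{T}=\Compl{S\cap T}$; applying the first implication to $\Compl{S},\Compl{T}$ yields the converse. So assume $\Cut{H}{S\cap T}$ is tight. Then $\Abs{M\cap \Cut{H}{S\cap T}}=1$ for every $M$, and the displayed inequality immediately gives $\Abs{M\cap \Cut{H}{S\cup T}}\leq 1$ for every $M$.

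The crucial step is to upgrade this to an equality, for which I would first show $\Fkt{m}{S\cup T}\not\equiv_r 0$. Suppose otherwise. Since $S,T$ cross we have $S\cap T\neq\emptyset$ and $\Compl{S}\cap\Compl{T}\neq\emptyset$, so $S\cup T\notin\Set{\emptyset,\Fkt{V}{H}}$, and the second part of \cref{obs:tightcut} provides a perfect matching $M^{\ast}$ with $\Abs{M^{\ast}\cap \Cut{H}{S\cup T}}\geq 2$. Feeding $M^{\ast}$ into the inequality above forces $\Abs{M^{\ast}\cap \Cut{H}{S\cap T}}\leq 0$, contradicting the tightness of $\Cut{H}{S\cap T}$. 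Hence $\Fkt{m}{S\cup T}\not\equiv_r 0$. A residue count then shows every perfect matching meets $\Cut{H}{S\cup T}$: if some $M$ had $\Abs{M\cap \Cut{H}{S\cup T}}=0$, each edge of $M$ would lie inside $S\cup T$ or inside $\Compl{S\cup T}$, so $\Fkt{m}{S\cup T}=\sum_{e\in M,\, e\subseteq S\cup T}\Fkt{m}{e}$ would be a multiple of $r$, contradicting $\Fkt{m}{S\cup T}\not\equiv_r 0$. Combining the bounds gives $\Abs{M\cap \Cut{H}{S\cup T}}=1$ for all $M$, so $\Cut{H}{S\cup T}$ is tight.

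I expect the main obstacle to be precisely this residue step. The submodular inequality is purely combinatorial and only bounds the two intersection sizes from above, while the naive computation $\Fkt{m}{S\cup T}\equiv_r \Fkt{m}{S}+\Fkt{m}{T}-\Fkt{m}{S\cap T}$ does not on its own rule out a vanishing residue. The nonvanishing of $\Fkt{m}{S\cup T}$ modulo $r$ cannot be established in isolation; it must be extracted by playing the hypothetical ``doubly crossing'' matching guaranteed by \cref{obs:tightcut} against the submodular inequality and the tightness of $\Cut{H}{S\cap T}$. The remaining ingredients — the case analysis for edge-wise submodularity and the bookkeeping $\Fkt{m}{e}=r$ for matching edges — are routine.
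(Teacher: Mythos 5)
Your proof is correct, and it takes a genuinely different (and arguably cleaner) route than the paper's. The paper argues by contradiction: it produces a perfect matching $M$ with $\Abs{M\cap\Cut{H}{S\cup T}}\geq 2$ (invoking the second part of \cref{obs:tightcut} exactly as you do, in the case where some matching misses $\Cut{H}{S\cup T}$ entirely), and then analyses the two edges $e_1,e_2$ of $M\cap\Cut{H}{S\cup T}$ by hand --- showing one lies in $\Cut{H}{S}\setminus\Cut{H}{T}$ and the other in $\Cut{H}{T}\setminus\Cut{H}{S}$, that neither belongs to $\Cut{H}{S\cap T}$, and that $M\cap\Cut{H}{S\cap T}\subseteq\Brace{M\cap\Cut{H}{S}}\cup\Brace{M\cap\Cut{H}{T}}=\Set{e_1,e_2}$ --- to conclude $M\cap\Cut{H}{S\cap T}=\emptyset$. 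You instead package that entire case analysis once and for all into the edge-wise submodularity of the hypergraph cut function, which, summed over a perfect matching and combined with the tightness of $\Cut{H}{S}$ and $\Cut{H}{T}$, gives $\Abs{M\cap\Cut{H}{S\cap T}}+\Abs{M\cap\Cut{H}{S\cup T}}\leq 2$ for every $M$; the contradiction then falls out in one line, and the same inequality also yields the upper bound $\Abs{M\cap\Cut{H}{S\cup T}}\leq 1$ directly. The residue argument via \cref{obs:tightcut} and the complementation trick for the converse direction are identical in both proofs. What your approach buys is a reusable general inequality (it is the same submodularity the paper uses implicitly when it asserts the inclusion above) and a proof structured as two clean bounds, $\leq 1$ and $\geq 1$, rather than a structural analysis of one particular bad matching. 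What the paper's version buys in exchange is the explicit description of the two crossing edges $e_1,e_2$ and the fact that a matching meeting $\Cut{H}{S\cup T}$ twice avoids $\Cut{H}{S\cap T}$ entirely, which is reused almost verbatim in the lemma that immediately follows it; if you adopted your argument there you would want to extract that structural information separately.
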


\begin{proof}
	Suppose that $\Cut{H}{S\cap T}$ is tight but $\Cut{H}{S\cup T}$ is not tight.
	Furthermore, let $\Multi{H}{m}$ be a multiplication of $H$ that is $r$-uniform for some $r\in \Z$.
	
	Let $M'$ be a perfect matching with $\Abs{M'\cap \Cut{H}{S\cup T}}\neq 1$.
	Assume $M' \cap \Cut{H}{S\cup T}=\emptyset$, then $\CondSet{e\subseteq S\cup T}{e\in M'}$ is a perfect matching of $\InducedSubgraph{H}{S\cup T}$, thus $\sum_{v\in S\cup T}\Fkt{m}{v}\equiv_r 0$.
	By \cref{obs:tightcut} there exists a perfect matching $M$ with $\Abs{M\cap \Cut{H}{S\cup T}}\geq 2$.
	If  $M' \cap \Cut{H}{S\cup T}\neq \emptyset$, then $\Abs{M' \cap \Cut{H}{S\cup T}} \geq 2$.
	So in any case there exists a perfect matching $M$ that has at least two edges in the cut $\Cut{H}{S\cup T}$.
	
	
	Every $e\in \Cut{H}{S\cup T}$ lies either in exactly one of the cuts $\Cut{H}{S}$ and $\Cut{H}{T}$ or in both.
	Thus, $\Abs{M\cap \Cut{H}{S\cup T}}=2$ and if $e_1, e_2$ are the two edges in the intersection of $M$ with $\Cut{H}{S\cup T}$, we can assume $e_1\in \Cut{H}{S}\setminus \Cut{H}{T}$ and $e_2\in \Cut{H}{T}\setminus \Cut{H}{S}$.
	It follows that $e_1\cap S\neq \emptyset$, $e_1\cap \Brace{\Compl{S} \cap \Compl{T}} \neq \emptyset$, $e_1\subseteq T$ or $e_1\subseteq \Compl{T}$, thus $e_1\subseteq \Compl{T}$ and $e_1\notin \Cut{H}{S\cap T}$.
	Similar, for $e_2$ we get $e_2\cap T\neq \emptyset$, $e_2\cap \Brace{\Compl{S} \cap \Compl{T}} \neq \emptyset$, $e_2\subseteq \Compl{S}$, and thus $e_2\notin \Cut{H}{S\cap T}$.
	This is a contradiction to the tightness of $\Cut{H}{S\cap T}$ because $M \cap \Cut{H}{S\cap T}\subseteq \Brace{M \cap \Cut{H}{S}} \cup \Brace{M \cap \Cut{H}{T}} = \Set{e_1,e_2}$, and therefore $M \cap \Cut{H}{S\cap T} = \emptyset$.
	
	The other direction follows by replacing $S$ and $T$ with $\Compl S$ and $\Compl T$ as $\Cut{H}{\Compl{S}} = \Cut{H}{S}$, $\Cut{H}{\Compl{T}}=\Cut{H}{T}$, $\Cut{H}{\Compl{S}\cap \Compl{T}}=\Cut{H}{S\cup T}$, and $\Cut{H}{\Compl{S} \cup \Compl{T}} = \Cut{H}{S\cap T}$.
\end{proof}

In \cref{fig:crossing} we see two crossing tight cuts and the two corners $S\cap T$ and $S\cup T$ on the diagonal marked in the same colour.
If we apply this lemma to $S$ and $\Compl{T}$ and observe that $\overline{S\cup \Compl{T}} = \Compl{S}\cap T$, we immediately obtain the following corollary.

\begin{corollary}
Let $H$ be a matching covered, \uniformable hypergraph and $S,T\subseteq \Fkt{V}{H}$ crossing sets such that $\Cut{H}{S}$ and $\Cut{H}{T}$ are tight.
Then the cut $\Cut{H}{S\cap \Compl{T}}$ is tight if and only if $\Cut{H}{\Compl{S}\cap T}$ is tight.
\end{corollary}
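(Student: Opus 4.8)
The plan is to derive this directly from \cref{lem:uncrossing} by replacing $T$ with its complement $\Compl{T}$, exactly as the remark preceding the statement suggests. First I would observe that a cut depends only on the bipartition of $\Fkt{V}{H}$ it induces and not on which shore we single out, so $\Cut{H}{\Compl{T}} = \Cut{H}{T}$; in particular $\Cut{H}{\Compl{T}}$ is tight whenever $\Cut{H}{T}$ is. Hence $S$ and $\Compl{T}$ form a pair whose associated cuts $\Cut{H}{S}$ and $\Cut{H}{\Compl{T}}$ are both tight, and since $H$ is still matching covered and \uniformable, the hypotheses of \cref{lem:uncrossing} are met once I check that $S$ and $\Compl{T}$ cross.

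Next I would verify the two set-theoretic facts that make the substitution work. Crossing is symmetric under complementing either argument: the four sets witnessing that $S$ and $\Compl{T}$ cross are $S\cap\Compl{T}$, $S\cap T$, $\Compl{S}\cap\Compl{T}$ and $\Compl{S}\cap T$, which are precisely the four sets witnessing that $S$ and $T$ cross. Since the latter are non-empty by assumption, $S$ and $\Compl{T}$ cross as well. Second, by De Morgan $\overline{S\cup\Compl{T}} = \Compl{S}\cap T$, and a cut is unchanged under complementing its shore, so $\Cut{H}{S\cup\Compl{T}} = \Cut{H}{\Compl{S}\cap T}$.

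Applying \cref{lem:uncrossing} to the crossing pair $S$, $\Compl{T}$ then gives that $\Cut{H}{S\cap\Compl{T}}$ is tight if and only if $\Cut{H}{S\cup\Compl{T}}$ is tight, and rewriting the right-hand cut as $\Cut{H}{\Compl{S}\cap T}$ via the identity above yields the claim. There is no real obstacle here: the argument is a bookkeeping exercise in complementation, and the only point that deserves a sentence of care is confirming that \emph{crossing} is preserved when $T$ is swapped for $\Compl{T}$, since the definition quantifies over all four quadrants of the partition rather than a distinguished pair.
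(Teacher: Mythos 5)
Your proposal is correct and follows exactly the paper's own argument: apply \cref{lem:uncrossing} to the crossing pair $S$ and $\Compl{T}$ and use $\overline{S\cup\Compl{T}}=\Compl{S}\cap T$ together with the invariance of a cut under complementing its shore. The extra checks you spell out (that crossing and tightness are preserved under replacing $T$ by $\Compl{T}$) are precisely the details the paper leaves implicit.
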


So recalling \cref{fig:crossing},
if both $\Cut{}{S}$ and $\Cut{}{T}$ are tight, then the two corners marked in dark colour, or the two corners marked in light colour are also tight cuts.
This is proven in the next lemma.

\begin{lemma}
Let $H$ be a matching covered, \uniformable hypergraph and $S,T\subseteq \Fkt{V}{H}$ be crossing sets such that $\Cut{H}{S}$ and $\Cut{H}{T}$ are tight.
If $\Cut{H}{S\cap T}$ is not tight, then $\Cut{H}{S\cap \Compl{T}}$ is tight.
\end{lemma}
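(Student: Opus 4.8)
The plan is to fix a multiplier $m\colon\Fkt{V}{H}\to\Z_{\geq 1}$ with $\Multi{H}{m}$ being $r$-uniform and to argue modulo $r$ throughout. Write $A=S\cap T$, $B=S\cap\Compl{T}$, $C=\Compl{S}\cap T$, $D=\Compl{S}\cap\Compl{T}$ for the four corners (all non-empty since $S,T$ cross, and each a proper non-empty subset of $\Fkt{V}{H}$), and set $a=\Fkt{m}{A}$, $b=\Fkt{m}{B}$, $c=\Fkt{m}{C}$, $d=\Fkt{m}{D}$. Since $\Cut{H}{S}$ and $\Cut{H}{T}$ are tight, \cref{obs:tightcut} supplies integers $k_S,k_T\in\Set{1,\ldots,r-1}$ with $a+b=\Fkt{m}{S}\equiv_r k_S$ and $a+c=\Fkt{m}{T}\equiv_r k_T$, both $\not\equiv_r 0$; and summing $m$ over any perfect matching of $H$ gives $a+b+c+d\equiv_r 0$. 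Note that $\Cut{H}{A}=\Cut{H}{S\cap T}$ and $\Cut{H}{B}=\Cut{H}{S\cap\Compl{T}}$ are the two cuts named in the statement, and that $D$ is the diagonal partner of $A$ and $C$ that of $B$.

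The heart of the argument is the following sub-claim: \emph{if a corner $X$ and its diagonal partner $Y$ both satisfy $\Fkt{m}{X}\not\equiv_r 0$ and $\Fkt{m}{Y}\not\equiv_r 0$, then $\Cut{H}{X}$ is tight.} To prove it I would first observe that every corner cut is contained in $\Cut{H}{S}\cup\Cut{H}{T}$: an edge meeting $X$ and its complement touches an $S$-side and a $T$-side, so a short check on which further corners it meets shows it crosses $\Cut{H}{S}$ or $\Cut{H}{T}$. Hence for any perfect matching $M$ the set $M\cap\Cut{H}{X}$ is contained in $\Set{e_S,e_T}$, the two unique edges of $M$ in $\Cut{H}{S}$ and $\Cut{H}{T}$, so $\Abs{M\cap\Cut{H}{X}}\leq 2$. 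If $\Cut{H}{X}$ were not tight, some $M$ would have $\Abs{M\cap\Cut{H}{X}}\in\Set{0,2}$. In the value $0$, every edge of $M$ meeting $X$ lies inside $X$, so these edges form a perfect matching of $\InducedSubgraph{H}{X}$ and $\Fkt{m}{X}\equiv_r 0$, a contradiction. In the value $2$ we must have $e_S\neq e_T$ with $e_S\in\Cut{H}{S}\setminus\Cut{H}{T}$ and $e_T\in\Cut{H}{T}\setminus\Cut{H}{S}$; since each contains a vertex of $X$, the edge $e_S$ is forced to lie in the $T$-side of $X$ and $e_T$ in the $S$-side of $X$, so both are disjoint from $Y$ (whose sides are the opposite ones). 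Thus $\Abs{M\cap\Cut{H}{Y}}=0$ and $\Fkt{m}{Y}\equiv_r 0$, again a contradiction. This proves the sub-claim.

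With the sub-claim the lemma follows by modular bookkeeping. Assume $\Cut{H}{A}$ is not tight. Applying the sub-claim to the diagonal $\Set{A,D}$ we cannot have both $a\not\equiv_r 0$ and $d\not\equiv_r 0$, so $a\equiv_r 0$ or $d\equiv_r 0$. If $a\equiv_r 0$ then $b\equiv_r k_S\not\equiv_r 0$ and $c\equiv_r k_T\not\equiv_r 0$; if instead $d\equiv_r 0$, then $a+b+c\equiv_r 0$ together with $b\equiv_r k_S-a$ and $c\equiv_r k_T-a$ forces $a\equiv_r k_S+k_T$, whence $b\equiv_r -k_T\not\equiv_r 0$ and $c\equiv_r -k_S\not\equiv_r 0$. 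Either way $b\not\equiv_r 0$ and $c\not\equiv_r 0$, so the sub-claim applied to the diagonal $\Set{B,C}$ yields that $\Cut{H}{B}=\Cut{H}{S\cap\Compl{T}}$ is tight, as required.

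The step I expect to be the real obstacle is the $\Abs{M\cap\Cut{H}{X}}=2$ case of the sub-claim: one must verify carefully that the two boundary edges $e_S,e_T$ are each pinned to a single side of $S$ and of $T$ and therefore jointly avoid the opposite corner $Y$. Everything else — the containment $\Cut{H}{X}\subseteq\Cut{H}{S}\cup\Cut{H}{T}$, the bound $\Abs{M\cap\Cut{H}{X}}\leq 2$, and the final congruence count — is routine, with the needed modular facts coming directly from \cref{obs:tightcut}.
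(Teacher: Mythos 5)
Your proof is correct, and it reaches the conclusion by a somewhat different organization than the paper. Your sub-claim is sound: for a corner $X$ with diagonal partner $Y$ one indeed has $\Cut{H}{X}\subseteq\Cut{H}{S}\cup\Cut{H}{T}$, hence $\Abs{M\cap\Cut{H}{X}}\le 2$ for every perfect matching $M$; the case $\Abs{M\cap\Cut{H}{X}}=0$ gives $\Fkt{m}{X}\equiv_r 0$, and in the case $\Abs{M\cap\Cut{H}{X}}=2$ the two edges are pinned to $e_S\subseteq$ the $T$-side of $X$ and $e_T\subseteq$ the $S$-side of $X$, so both miss $Y$ and $\Fkt{m}{Y}\equiv_r 0$ — the step you flagged as delicate does go through. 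The closing congruence casework (if $a\equiv_r 0$ then $b\equiv_r k_S$, $c\equiv_r k_T$; if $d\equiv_r 0$ then $a\equiv_r k_S+k_T$, so $b\equiv_r -k_T$, $c\equiv_r -k_S$) is also correct, using $\Fkt{m}{S},\Fkt{m}{T}\not\equiv_r 0$ from \cref{obs:tightcut}. The paper instead leans on \cref{lem:uncrossing}: non-tightness of $\Cut{H}{S\cap T}$ transfers to $\Cut{H}{S\cup T}$, and rerunning that lemma's analysis produces a perfect matching with exactly two edges in $\Cut{H}{S\cup T}$ and none in $\Cut{H}{S\cap T}$, which pins down the single congruence $\Fkt{m}{S\cap T}\equiv_r 0$ rather than your disjunction; assuming $\Cut{H}{S\cap\Compl{T}}$ is also not tight then gives $\Fkt{m}{S\cap\Compl{T}}\equiv_r 0$ symmetrically, and $\Fkt{m}{S}\equiv_r 0$ contradicts \cref{obs:tightcut}. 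What your version buys is self-containment — you never invoke \cref{lem:uncrossing} and argue directly about the corner cut that is hypothesized non-tight — at the price of the extra $d\equiv_r 0$ branch; the paper's version is shorter because the uncrossing lemma has already done that work.
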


\begin{proof}
If $\Cut{H}{S\cap T}$ is not tight, then also $\Cut{H}{S\cup T}$ is not tight.
Using the same arguments as in the proof of \cref{lem:uncrossing} we can find a perfect matching $M$ with $\Abs{M\cap \Cut{H}{S\cup T}}=2$.
We have seen that such a matching does not intersect $\Cut{H}{S\cap T}$.
This implies that $\sum_{v\in S\cap T}\Fkt{m}{v}\equiv_r 0$ where $\Multi{H}{m}$ is a $r$-uniform multiplication of $H$. 

Suppose that $\Cut{H}{S\cap \Compl{T}}$ is not tight.
By a symmetric argument, we can find a perfect matching $M$ with $\Abs{M\cap \Cut{H}{S \cup  \Compl{T}}}=2$ and $\Abs{M\cap \Cut{H}{S\cap \Compl{T}}}=0$ which implies that $\sum_{v\in S\cap \Compl{T}}\Fkt{m}{v}\equiv_r 0$.

Now, $\sum_{v\in S}\Fkt{m}{v} = \sum_{v\in S\cap \Compl{T}}\Fkt{m}{v}+ \sum_{v\in S\cap T}\Fkt{m}{v}\equiv_r 0$ which contradicts \cref{obs:tightcut}.
\end{proof}

We sum up the previous results in the following corollary.

\begin{corollary}\label{cor:uncrossing2}
If $H$ is a matching covered, \uniformable hypergraph, and $S,T\subseteq \Fkt{V}{H}$ are crossing sets such that $\Cut{H}{S}$ and $\Cut{H}{T}$ are tight, then $\Cut{H}{S\cap T}$ and $\Cut{H}{S\cup T}$, or  $\Cut{H}{S\cap \Compl{T}}$ and $\Cut{H}{\Compl{S}\cap T}$ are tight.
\end{corollary}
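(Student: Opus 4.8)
The plan is to obtain the corollary from the three preceding results by a single case distinction on whether $\Cut{H}{S\cap T}$ is tight; essentially all of the substantive work has already been done, so the proof is just a combination step. Before splitting into cases, I would record the easy observation that since $S$ and $T$ cross, the pair $S$ and $\Compl{T}$ crosses as well: the four regions $S\cap \Compl{T}$, $S\cap T$, $\Compl{S}\cap \Compl{T}$, $\Compl{S}\cap T$ are exactly the four regions determined by $S,T$, merely relabelled, and all are non-empty by assumption. Consequently every earlier statement requiring crossing sets applies verbatim to the relevant pairs, and in particular the intermediate corollary (the equivalence ``$\Cut{H}{S\cap \Compl{T}}$ is tight iff $\Cut{H}{\Compl{S}\cap T}$ is tight'') is available.

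In the first case, suppose $\Cut{H}{S\cap T}$ is tight. Then \cref{lem:uncrossing} immediately yields that $\Cut{H}{S\cup T}$ is tight as well, so the first alternative in the statement, namely that both $\Cut{H}{S\cap T}$ and $\Cut{H}{S\cup T}$ are tight, holds.

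In the second case, suppose instead that $\Cut{H}{S\cap T}$ is not tight. Then the preceding lemma (``if $\Cut{H}{S\cap T}$ is not tight, then $\Cut{H}{S\cap \Compl{T}}$ is tight'') gives that $\Cut{H}{S\cap \Compl{T}}$ is tight, and the intermediate corollary then upgrades this to the tightness of $\Cut{H}{\Compl{S}\cap T}$ as well. Hence the second alternative holds. Since the two cases are exhaustive, in every situation one of the two pairs of cuts is tight, which is exactly the claim.

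The proof has no real obstacle, as the crossing analysis and the modular arithmetic of \cref{obs:tightcut} were already absorbed into \cref{lem:uncrossing} and the two intervening statements. The only points to be careful about are the bookkeeping of which earlier result is invoked in which branch, and verifying that the crossing hypothesis transfers to the pair $(S,\Compl{T})$ so that the intermediate corollary is genuinely applicable in the second case; both are immediate.
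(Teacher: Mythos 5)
Your proposal is correct and follows exactly the route the paper intends: the corollary is stated as a summary of \cref{lem:uncrossing} and the two intervening unnamed results, and your case distinction on whether $\Cut{H}{S\cap T}$ is tight, together with the observation that $S$ and $\Compl{T}$ also cross, is precisely how those results combine. Nothing is missing.
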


For two crossing tight cuts $\Cut{H}{S}, \Cut{H}{T}$ we can always assume that $\Cut{H}{S\cap T}$ and $\Cut{H}{S\cup T}$ are tight after possibly replacing $T$ by $\Compl{T}$. 
In the graph case, all four sets $S,T, S\cap T$, and $S\cup T$ would be of odd size.
In \uniformable hypergraphs it is possible that $\Cut{H}{S}$ and $\Cut{H}{T}$ coincide even if $S\neq T$ and $S\neq \Compl{T}$. If two crossing sets define distinct tight cuts, then similar parity results as in the graph case hold.
Therefore, we need the following observation concerning tight cuts.
 \begin{observation}\label{obs:subset}
 If $S$ defines a tight cut in a \uniformable hypergraph $H$ and $\Cut{H}{A}$ is a non-empty cut with $\Cut{H}{A}\subseteq \Cut{H}{S}$, then  $\Cut{H}{A} = \Cut{H}{S}$.
 \end{observation}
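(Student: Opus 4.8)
The plan is to show that $\Cut{H}{A}$ is itself a \emph{tight} cut, and then to exploit that $H$ is matching covered (our standing assumption) to deduce the reverse inclusion $\Cut{H}{S}\subseteq\Cut{H}{A}$. To set up, fix $m:\Fkt{V}{H}\to\Z_{\geq 1}$ and an integer $r$ with $\Multi{H}{m}$ being $r$-uniform. Since $\Cut{H}{A}$ is non-empty and $H$ is connected, we have $A\notin\Set{\emptyset,\Fkt{V}{H}}$, so \cref{obs:tightcut} is applicable to the set $A$.

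First I would record the easy half: every perfect matching $M$ satisfies $\Abs{M\cap\Cut{H}{A}}\leq 1$. This is immediate from $\Cut{H}{A}\subseteq\Cut{H}{S}$ combined with the tightness of $\Cut{H}{S}$, which forces $\Abs{M\cap\Cut{H}{S}}=1$. The substantive step is to rule out that any perfect matching avoids $\Cut{H}{A}$ altogether. If $\sum_{v\in A}\Fkt{m}{v}\equiv_r 0$, then the second part of \cref{obs:tightcut} produces a perfect matching meeting $\Cut{H}{A}$ in at least two edges, contradicting the bound just established; hence $\sum_{v\in A}\Fkt{m}{v}\not\equiv_r 0$. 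Consequently, no perfect matching $M$ can satisfy $M\cap\Cut{H}{A}=\emptyset$, because the edges of such an $M$ lying inside $A$ would form a perfect matching of the subhypergraph induced by $A$, and summing $m$ over its vertices would yield $\sum_{v\in A}\Fkt{m}{v}\equiv_r 0$. Combining both bounds gives $\Abs{M\cap\Cut{H}{A}}=1$ for every perfect matching $M$, so $\Cut{H}{A}$ is tight.

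Finally I would use matching-coveredness to close the loop. Take an arbitrary $e\in\Cut{H}{S}$ and a perfect matching $M$ containing $e$; then $e$ is the unique edge of $M$ in $\Cut{H}{S}$. By the tightness of $\Cut{H}{A}$ just shown, $M$ contains a (unique) edge $f\in\Cut{H}{A}\subseteq\Cut{H}{S}$, and since $f\in M\cap\Cut{H}{S}=\Set{e}$ we must have $f=e$, so $e\in\Cut{H}{A}$. This proves $\Cut{H}{S}\subseteq\Cut{H}{A}$, which together with the hypothesis $\Cut{H}{A}\subseteq\Cut{H}{S}$ gives the desired equality.

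The main obstacle is the middle step, namely upgrading the arbitrary subcut $\Cut{H}{A}$ to a tight cut; once that is in place the rest is bookkeeping driven by the matching-covered property. This is exactly where uniformability is used, via the modular counting in \cref{obs:tightcut}: the implication ``a perfect matching avoids $\Cut{H}{A}$ $\Rightarrow\sum_{v\in A}\Fkt{m}{v}\equiv_r 0$'' together with the partial converse supplied by the proposition pins down the residue class of $A$ and forbids $\Cut{H}{A}$ from being a proper subset of the tight cut $\Cut{H}{S}$.
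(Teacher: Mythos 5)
Your proof is correct and rests on exactly the same ingredients as the paper's own argument --- the modular counting of \cref{obs:tightcut} together with the matching-covered assumption --- merely reorganized: the paper derives a contradiction directly from a single edge $e^*\in\Cut{H}{S}\setminus\Cut{H}{A}$ (a matching containing $e^*$ avoids $\Cut{H}{A}$, forcing $\Fkt{m}{A}\equiv_r 0$ and hence a matching meeting $\Cut{H}{A}$, and thus $\Cut{H}{S}$, twice), whereas you first upgrade $\Cut{H}{A}$ to a tight cut and then read off the reverse inclusion. Both routes are sound and of comparable length, so this is essentially the same proof.
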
 
 \begin{proof}
 Suppose there exists a hyperedge $e^*\in  \Cut{H}{S}\setminus \Cut{H}{A}$. Let $M$ be a perfect matching of $H$ containing $e^*$. As $|M\cap \Cut{H}{S}|=1$, it follows that $M\cap \Cut{H}{A}=\emptyset$. This implies that $m(A)\equiv 0$ mod $r$ for any function $m:V(H)\rightarrow \mathbb N$ and $r\in \mathbb N$ such that $H^{(m)}$ is $r$-uniform. By \cref{obs:tightcut}, there exists a perfect matching $M'$ with $|M'\cap \Cut{H}{A}|\geq 2$. Thus, $|M'\cap \Cut{H}{S}|\geq 2$, contradicting that $\Cut{H}{S}$ is tight.
 \end{proof}
 
\begin{lemma}\label{obs:parity}
Let $H$ be a matching covered, \uniformable hypergraph, $S, T\subseteq \Fkt{V}{H}$ two crossing sets such that $S,T$ and $S\cup T$ define tight cuts in $H$.
If $\Cut{H}{S} \neq \Cut{H}{T}$, then for any function $m : \Fkt{V}{H}\rightarrow \Z_{\geq 1}$ with the property that $\Multi{H}{m}$ is $r$-uniform for some $r\in \Z$ we have $\Fkt{m}{S}\equiv_r \Fkt{m}{S\cap T}\equiv_r \Fkt{m}{S\cup T}\equiv_r \Fkt{m}{T}$.
\end{lemma}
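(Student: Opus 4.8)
The plan is to fix a function $m$ with $\Multi{H}{m}$ being $r$-uniform and to reduce the four-fold congruence to a statement about two of the crossing pieces. Since $S,T$ are crossing with $\Cut{H}{S}$, $\Cut{H}{T}$, $\Cut{H}{S\cup T}$ tight, \cref{lem:uncrossing} also makes $\Cut{H}{S\cap T}$ tight, so by \cref{obs:tightcut} each $X\in\Set{S,T,S\cap T,S\cup T}$ carries a residue $k_X\in\Set{1,\ldots,r-1}$ with $\Fkt{m}{X}\equiv_r k_X$ and, moreover, $\Fkt{m}{e\cap X}=k_X$ for every $e\in\Cut{H}{X}$. Partitioning $\Fkt{V}{H}$ into the four regions $S\cap T$, $S\cap\Compl T$, $\Compl S\cap T$, $\Compl S\cap\Compl T$, the asserted chain of congruences is equivalent to $\Fkt{m}{S\cap\Compl T}\equiv_r 0$ together with $\Fkt{m}{\Compl S\cap T}\equiv_r 0$ (the remaining congruence then follows by inclusion--exclusion). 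As the hypotheses are symmetric under exchanging $S$ and $T$, which swaps these two regions, it suffices to prove $\Fkt{m}{S\cap\Compl T}\equiv_r 0$. I would argue by contradiction and assume $\Fkt{m}{S\cap\Compl T}\not\equiv_r 0$.

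Next I would classify the edges. Every edge either crosses $S$ or $T$, or is contained in one of the four regions. Since $\Cut{H}{S}\ne\Cut{H}{T}$, \cref{obs:subset} shows the two cuts are incomparable, so $\Cut{H}{S}\setminus\Cut{H}{T}$ and $\Cut{H}{T}\setminus\Cut{H}{S}$ are both non-empty. An edge $e\in\Cut{H}{S}\setminus\Cut{H}{T}$ satisfies $e\subseteq T$ or $e\subseteq\Compl T$; in the first case $\Fkt{m}{e\cap S}=\Fkt{m}{e\cap (S\cap T)}$ forces $k_S=k_{S\cap T}$, i.e.\ $\Fkt{m}{S\cap\Compl T}\equiv_r 0$. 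Under our assumption this type cannot occur, so every edge of $\Cut{H}{S}\setminus\Cut{H}{T}$ lies in $\Compl T$, giving $k_S=k_{S\cup T}$; symmetrically every edge of $\Cut{H}{T}\setminus\Cut{H}{S}$ lies in $S$, giving $k_T=k_{S\cap T}$.

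Finally I would turn these containments into cut equalities and invoke connectivity. Any edge of $\Cut{H}{S\cup T}$ lies in $\Cut{H}{S}\cup\Cut{H}{T}$, and an edge of $\Cut{H}{T}\setminus\Cut{H}{S}$ is now contained in $S$ and hence misses $\Compl S\cap\Compl T$, so it cannot lie in $\Cut{H}{S\cup T}$; thus $\Cut{H}{S\cup T}\subseteq\Cut{H}{S}$, and \cref{obs:subset} upgrades this to $\Cut{H}{S\cup T}=\Cut{H}{S}$. Symmetrically $\Cut{H}{S\cap T}=\Cut{H}{T}$. Applying \cref{obs:tightcut} to these coinciding cuts and using $k_{S\cup T}-k_S=0$ and $k_T-k_{S\cap T}=0$, every edge of $\Cut{H}{S}$ and of $\Cut{H}{T}$ is disjoint from $\Compl S\cap T$. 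As every remaining edge lies inside a single region, no edge meets both $\Compl S\cap T$ and its complement. Since $S$ and $T$ cross, both $\Compl S\cap T$ and its complement are non-empty, so $H$ would be disconnected, contradicting that $H$ is matching covered. This yields $\Fkt{m}{S\cap\Compl T}\equiv_r 0$, and the symmetric statement completes the proof.

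I expect the main obstacle to be exactly this last step: the naive per-edge computation only ever certifies one of the two required congruences, and the degenerate case in which it repeatedly certifies the same one looks perfectly self-consistent at the level of the residues $k_X$. Recognising that this degeneracy secretly isolates one region---so that it is ruled out not arithmetically but by the global connectivity of a matching covered hypergraph, with \cref{obs:subset} providing the bridge from cut inclusions to genuine cut equalities---is the crux of the argument.
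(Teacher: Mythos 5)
Your proof is correct, but it takes a genuinely different route from the paper's. The paper first shows directly that $\Cut{H}{S}\neq\Cut{H}{S\cap T}$ and $\Cut{H}{S}\neq\Cut{H}{S\cup T}$, uses \cref{obs:subset} to extract an edge $e_1\in\Cut{H}{S}\setminus\Cut{H}{S\cap T}$, threads a perfect matching $M$ through it, and reads off the congruences from $e_1$ and from the unique partner edge $e_2\in M\cap\Cut{H}{S\cap T}$ via \cref{obs:tightcut}; running this once more with $e_1\in\Cut{H}{S}\setminus\Cut{H}{S\cup T}$ yields the remaining congruences. You instead argue by contradiction: you reduce the four-fold chain to the single congruence $\Fkt{m}{S\cap\Compl{T}}\equiv_r 0$ by the $S\leftrightarrow T$ symmetry, classify the edges of $\Cut{H}{S}\setminus\Cut{H}{T}$ and $\Cut{H}{T}\setminus\Cut{H}{S}$ by which side of the other cut they lie on, and observe that the assumed failure forces all of them onto one side, which via \cref{obs:subset} collapses $\Cut{H}{S\cup T}=\Cut{H}{S}$ and $\Cut{H}{S\cap T}=\Cut{H}{T}$ and then isolates the region $\Compl{S}\cap T$, contradicting connectivity. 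Your version avoids explicitly invoking a perfect matching (it enters only through \cref{obs:tightcut}) and makes transparent exactly where the argument could degenerate, at the cost of being indirect; the paper's version is shorter once the inequality of the cuts is established, and that preliminary step plays essentially the role of your endgame. Both hinge on the same two tools, \cref{obs:tightcut}(i) and \cref{obs:subset}, and both implicitly use \cref{lem:uncrossing} to know that $\Cut{H}{S\cap T}$ is tight.
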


\begin{proof}
  We first show that $\Cut{H}{S}\neq \Cut{H}{T}$ implies $\Cut{H}{S}\neq \Cut{H}{S\cap T}$ and $\Cut{H}{S}\neq \Cut{H}{S\cup T}$.
  Suppose that $\Cut{H}{S} = \Cut{H}{S\cap T}$.
  As $S$ and $T$ are crossing, the set $S\cap \Compl{T}$ is not empty and because $H$ is connected $\Cut{H}{S\cap \Compl{T}}\neq \emptyset$.
  Suppose that there exists an edge $e\in \Cut{H}{S\cap \Compl{T}}\setminus \Cut{H}{S}$, then $e\subseteq S$ or $e\subseteq \Compl{S}$.
  Because of $e\in\Cut{H}{S\cap \Compl{T}}$ we have $e\subseteq S$ and $e \cap \Brace{S\cap T} \neq \emptyset$.
  But then $e\in \Cut{H}{S\cap T}\setminus \Cut{H}{S}$ which is not possible as we assumed $\Cut{H}{S\cap T}=\Cut{H}{S}$.
  Thus, $\Cut{H}{S\cap \Compl{T}}\subseteq \Cut{H}{S}$, and by \cref{obs:subset} equality holds.
  Now, $e'\in \Cut{H}{S}$ implies $e'\in \Cut{H}{S\cap T}=\Cut{H}{S\cap \Compl{T}}$, and thus $e'\cap \Brace{S\cap T} \neq \emptyset$ and $e'\cap \Brace{S\cap \Compl{T}} \neq \emptyset$, in particular, we get $e'\in \Cut{H}{T}$.
  It follows that $\Cut{H}{S}= \Cut{H}{T}$; a contradiction.
  
  If we interchange $S$ with $\Compl{S}$ and $T$ with $\Compl{T}$ the argument above shows that $\Cut{H}{S} = \Cut{H}{\Compl{S}}\neq \Cut{H}{\Compl{S}\cap \Compl{T}}=\Cut{H}{S\cup T}$.
  
  Now, let $e_1\in \Cut{H}{S}\setminus \Cut{H}{S\cap T}$, and choose a perfect matching $M$ containing $e_1$.
  Let $e_2\in M\cap \Cut{H}{S\cap T}$.
  It follows that $e_2\in \Cut{H}{T}$, $e_2\notin \Cut{H}{S}$, $e_2\notin \Cut{H}{S\cup T}$, $e_1\notin \Cut{H}{T}$, and $e_1\in \Cut{H}{S\cup T}$.
  Thus, $e_1\cap \Brace{\Compl{S} \cap \Compl{T}} \neq \emptyset$ and $e_1\subseteq \Compl{T}$, which implies 
  \begin{equation*}
  	\sum_{v\in S}\Fkt{m}{v}\equiv_r \sum_{v\in e_1\cap S}\Fkt{m}{v}= \sum_{v\in e_1\cap \Brace{S \cup T}}\Fkt{m}{v}\equiv_r \sum_{v\in S\cup T}\Fkt{m}{v}.
  \end{equation*}
  For $e_2$ we get $e_2\cap \Brace{S\cap T} \neq \emptyset$ and $e_2\subseteq S$, thus 
  \begin{equation*}
  	\sum_{v\in T}\Fkt{m}{v}\equiv_r\sum_{v\in e_2\cap T}\Fkt{m}{v}=\sum_{v\in e_2\cap \Brace{S\cap T}}\Fkt{m}{v}\equiv_r \sum_{v\in S\cap T}\Fkt{m}{v}.
  \end{equation*}
  Starting the same argument with $e_1 \in \Cut{H}{S}\setminus \Cut{H}{S\cup T}$ gives $m(S)\equiv_r m(S\cap T)$ and $m(T)\equiv_r m(S\cup T)$.
\end{proof}

If $\Cut{H}{S}, \Cut{H}{T}$ are non-trivial tight cuts it is possible that $\Cut{H}{S\cap T}$ and $\Cut{H}{S\cup T}$ are trivial.
In this case, $\Abs{S\cap T}=1=\Abs{\Compl{S}\cap \Compl{T}}$.
The following lemma shows that in this degenerate case the same, up to parallel edges, tight cut contractions are obtained.

\begin{lemma}\label{lemma:separationcut}
 	Let $H$ be a matching covered, \uniformable hypergraph, and $S, T \subseteq \Fkt{V}{H}$ be sets such that $\Abs{S\cap T} = 1 = \Abs{\Compl{S}\cap \Compl{T}}$.
 	If $\Cut{H}{S}$ and $\Cut{H}{T}$ are non-trivial tight cuts with $\Cut{H}{S}\neq \Cut{H}{T}$, then the tight cut contractions w.r.t.\ $\Cut{H}{S}$ yield the same two hypergraphs as the contractions w.r.t.\ $\Cut{H}{T}$ up to parallel edges.
\end{lemma}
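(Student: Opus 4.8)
The plan is to name the four regions cut out by the two crossing sets and then to read off, from the two tightness conditions, which hyperedges can occur at all; this rigidity is what makes the two pairs of contractions coincide after relabelling. Write $S\cap T=\Set{x}$, $\Compl{S}\cap\Compl{T}=\Set{y}$, and put $A\coloneqq S\cap\Compl{T}$, $B\coloneqq\Compl{S}\cap T$, so that $\Fkt{V}{H}=\Set{x}\sqcup A\sqcup B\sqcup\Set{y}$ with $S=\Set{x}\cup A$ and $T=\Set{x}\cup B$; non-triviality gives $A,B\neq\emptyset$, so $S,T$ cross. Since $\Abs{S\cap T}=1=\Abs{\Compl{S}\cap\Compl{T}}$, the sets $S\cap T=\Set{x}$ and $S\cup T=\Fkt{V}{H}\setminus\Set{y}$ induce trivial, hence automatically tight, cuts. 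Thus \cref{obs:parity} applies and produces a residue $k$ with $\Fkt{m}{S}\equiv_r\Fkt{m}{x}\equiv_r\Fkt{m}{S\cap T}\equiv_r\Fkt{m}{S\cup T}\equiv_r\Fkt{m}{T}\equiv_r k$ and therefore $\Fkt{m}{y}\equiv_r r-k$, for every multiplication $\Multi{H}{m}$ that is $r$-uniform.

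First I would use \cref{obs:tightcut} to confine the hyperedges. For $e\in\Cut{H}{S}$ one has $\Fkt{m}{e\cap S}=k$; if moreover $x\in e$, then $\Fkt{m}{x}=k$ forces $\Fkt{m}{e\cap A}=0$, i.e.\ $e\cap A=\emptyset$. Running the symmetric computations (using $y$, using $\Cut{H}{T}$, and using that $\Fkt{m}{e\cap\Compl{S}}=r-k$) shows that a hyperedge meeting $x$ or $y$ can meet at most one of $A,B$, and that the only hyperedges meeting both $A$ and $B$ are those disjoint from $\Set{x,y}$, whose two halves each carry $m$-weight $k$. In particular, such hyperedges exist only when $r=2k$. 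Hence when $r\neq 2k$ no hyperedge joins $A$ and $B$, and $H$ is the union of the two restricted pieces $\PartHG{H}{A\cup\Set{x,y}}$ and $\PartHG{H}{B\cup\Set{x,y}}$, which share only $x$, $y$, and possibly the edge $\Set{x,y}$.

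In the case $r\neq 2k$ I would then compare the contractions directly. Contracting $T=\Set{x}\cup B$ collapses the $B$-piece and leaves the $A$-piece with $x$ renamed to $t$, and contracting $\Compl{S}=B\cup\Set{y}$ collapses the $B$-piece and leaves the $A$-piece with $y$ renamed to $\Compl{s}$. The relabelling that fixes $A$ and sends $t\mapsto x$, $y\mapsto\Compl{s}$ then carries each hyperedge of $H_T$ to the corresponding hyperedge of $H_{\Compl{S}}$: hyperedges avoiding $x$ are untouched by the $T$-contraction and matched by the relabelling, a hyperedge $\Set{x}\cup\alpha$ becomes $\alpha\cup\Set{t}\mapsto\Set{x}\cup\alpha$ (its unchanged $\Compl{S}$-form), and all hyperedges through both $x$ and $y$ collapse into the single parallel class $\Set{x,\Compl{s}}$. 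The only non-automatic point is that this collapsed class appears on both sides exactly when it should; here I would invoke that $H$ is matching covered together with the tightness of both cuts — any perfect matching through a hyperedge of $\Cut{H}{S}\setminus\Cut{H}{T}$ must also meet $\Cut{H}{T}$, which forces the required partner hyperedge to exist. This gives $H_T\cong H_{\Compl{S}}$ up to parallel edges, and the symmetric argument (interchanging $S,T$, i.e.\ $A,B$) gives $H_S\cong H_{\Compl{T}}$, so $\Set{H_S,H_{\Compl{S}}}=\Set{H_T,H_{\Compl{T}}}$ up to parallel edges.

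The case $r=2k$ is the main obstacle: now hyperedges joining $A$ and $B$ may occur, the clean split of $H$ breaks down, and the type-respecting relabelling above fails on exactly those hyperedges. Here $\Fkt{m}{x}=\Fkt{m}{y}=k$, so the configuration is genuinely graph-like, and I would instead prove that the involution $\sigma$ exchanging $x$ and $y$ (fixing all other vertices) is an automorphism of $H$ up to parallel edges. The equality $\Fkt{m}{x}=\Fkt{m}{y}$ makes $\sigma(e)$ weight-admissible for every edge, hyperedges meeting both $A$ and $B$ as well as $\Set{x,y}$ are $\sigma$-fixed, and the real content is the exchange symmetry $\Set{x}\cup\alpha\in\Fkt{E}{H}\Leftrightarrow\Set{y}\cup\alpha\in\Fkt{E}{H}$ (and its analogue for $\beta\subseteq B$), which I would extract from the two tightness conditions via the matching-covered/perfect-matching exchange. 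Since $\sigma$ maps $S=\Set{x}\cup A$ onto $\Compl{T}=\Set{y}\cup A$, it induces $H_S\cong H_{\Compl{T}}$ and $H_{\Compl{S}}\cong H_T$, again identifying the two unordered pairs. Proving this exchange symmetry from tightness is the hypergraphic counterpart of the classical graph argument and is where the bulk of the work lies.
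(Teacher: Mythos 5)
There is a genuine gap, and it sits exactly where you locate ``the bulk of the work'': the case $r=2k$. Two things go wrong there. First, the premise of your case split is misleading: hyperedges meeting both $A$ and $B$ cannot exist even when $r=2k$. Indeed, if $e$ meets both $A$ and $B$ then, as you correctly show, $e$ avoids $x$ and $y$, so $e\in\Cut{H}{S}\cap\Cut{H}{T}$; take a perfect matching $M$ containing $e$ (here matching coveredness is used). Then $e$ is the unique edge of $M$ in each of the two cuts, so the edge $f\in M$ covering $x$ satisfies $f\notin\Cut{H}{S}$ and $f\notin\Cut{H}{T}$, hence $f\subseteq S\cap T=\Set{x}$ --- a singleton edge, which is impossible since $\Fkt{m}{f}=\Fkt{m}{x}=k<r$. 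This is precisely the paper's ``crucial observation'' that $\Cut{H}{S}\cap\Cut{H}{T}\subseteq\Set{\Set{x,y}}$; it holds unconditionally, so no case distinction on $r$ versus $2k$ is needed and your relabelling argument for the ``split'' case is essentially the whole proof. Second, the strategy you actually propose for $r=2k$ is unsound: the involution exchanging $x$ and $y$ is in general not an automorphism of $H$, even up to parallel edges, and the claimed exchange symmetry $\Set{x}\cup\alpha\in\Fkt{E}{H}\Leftrightarrow\Set{y}\cup\alpha\in\Fkt{E}{H}$ is false. Already the cycle $C_6$ with vertices $x,a_1,a_2,y,b_2,b_1$ in cyclic order, $S=\Set{x,a_1,a_2}$ and $T=\Set{x,b_1,b_2}$, gives two crossing non-trivial tight cuts with $\Abs{S\cap T}=\Abs{\Compl{S}\cap\Compl{T}}=1$, $\Cut{H}{S}\neq\Cut{H}{T}$ and $r=2k$, yet $\Set{x,a_1}$ is an edge while $\Set{y,a_1}$ is not. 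The lemma still holds there because $H_S$ and $H_{\Compl{T}}$ are isomorphic via a map identifying the contracted vertex $s$ with $x$ and $y$ with $\Compl{t}$ --- an isomorphism of the \emph{contractions}, not an automorphism of $H$ --- which is exactly the bijection you use in your other case.

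For the record, your $r\neq 2k$ argument follows the paper's proof: the paper defines a vertex bijection sending $v^*=x$ to $s$ and $\Compl{t}$ to $w^*=y$ (your $t\mapsto x$, $y\mapsto\Compl{s}$ read in the other direction) and then verifies that it is edge-surjective and parallelism-preserving, with the non-automatic existence claims supplied by the same perfect-matching exchange you invoke for the collapsed parallel class $\Set{x,\Compl{s}}$; those checks are only flagged in your sketch and would still need to be carried out. The fix is therefore not a new idea for $r=2k$, but the singleton-edge argument above, which eliminates that case entirely and lets the bijection argument run uniformly.
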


\begin{proof}
	As $H$ is \uniformable there exists a function $m:\Fkt{V}{H}\rightarrow \Z_{\geq 1}$ such that $\Multi{H}{m}$ is $r$-uniform for some $r\in \Z$.
	We denote the unique vertex in $S\cap T$ by $v^*$ and the unique vertex in $\Compl{S}\cap \Compl{T}$ by $w^*$.
	
	A crucial observation is that $\Cut{H}{S}\cap \Cut{H}{T}=\Set{\Set{v^*, w^*}}$ if $\{v^*, w^*\}\in \Fkt{E}{H}$, and $\Cut{H}{S}\cap \Cut{H}{T}=\emptyset$ otherwise.
	Namely, there exists an integer $k\in \{1,\ldots, r-1\}$ such that $\Fkt{m}{S}\equiv \Fkt{m}{T}\equiv \Fkt{m}{S\cap T}\equiv \Fkt{m}{S\cup T}\equiv_r k$ by \cref{obs:tightcut} and \cref{obs:parity}.
	This implies that $\Fkt{m}{v^*}=k$ and $\Fkt{m}{w^*}=r-k$.
	If there exists an edge $e\in \Cut{H}{S}\cap \Cut{H}{T}$, then $e$ contains $v^*$ and $w^*$.
	As $\sum_{v\in e}\Fkt{m}{v}=r$ and $\Fkt{m}{v}\geq 1$ for all $v\in \Fkt{V}{H}$, $e$ is equal to $\Set{v^*, w^*}$.

	Now, we show that $H_S$ and $H_{\Compl{T}}$ are equivalent up to multiple edges.
	We define a function $\phi : \Fkt{V}{H_{\Compl{T}}}\rightarrow \Fkt{V}{H_S}$ by 
	\begin{equation*}
		\Fkt{\phi}{v}\coloneqq\begin{cases}
			v &, \text{ if } v\in \Compl{S}\cap T\\
			s &, \text{ if } v = v^*\in S\cap T\\
			w^* &, \text{ if } v=\Compl{t}\in \Fkt{V}{H_{\Compl{T}}}\setminus T.
		\end{cases}
	\end{equation*}
	Observe that $\Fkt{V}{H_{\Compl{T}}} = \Brace{\Compl{S}\cap T}\cup \Set{v^*}\cup \Set{\Compl{t}}$ and $\Fkt{V}{H_S}=\Brace{\Compl{S}\cap T }\cup \Set{w^*}\cup \Set{s}$.
	Thus, $\phi$ is well defined and bijective.
	We claim that $\phi$ extends to a function from $\Fkt{E}{H_{\Compl{T}}}$ to $\Fkt{E}{H_S}$ via $\Fkt{\phi}{e}=\CondSet{\Fkt{\phi}{v}}{v\in e}$ for all $e\in \Fkt{E}{H_{\Compl{T}}}$.
	First, we show that $\Fkt{\phi}{e}\in \Fkt{E}{H_S}$ for all $e\in \Fkt{E}{H_{\Compl{T}}}$.
	\begin{itemize}
		\item  If $e\subseteq \Compl{S}\cap T$, then $\Fkt{\phi}{e}=e\in \Fkt{E}{H_S}$.
		
		\item  If $e\subseteq T$ and $v^*\in e$, then $\Fkt{\phi}{e}=e\cap \Compl{S}\cup \{s\}\in \Fkt{E}{H_S}$.
		
		\item  If $\Compl{t}\in e$, then there exists an edge $\tilde e\in \Fkt{E}{H}$ with $\Brace{\tilde e\cap T}\cup \Set{\Compl{t}}=e$.
		This edge $\tilde e$ lies also in $\Cut{H}{S\cap T}$, $\Cut{H}{S\cup T}$, or in both.
		Thus, $\tilde e\cap \Set{v^*, w^*}\neq \emptyset$.
		
		If $\tilde e=\Set{v^*, w^*}$, then $e=\Set{v^*,\Compl{t}}$, and $\Fkt{\phi}{e} = \Set{s,w^*}=\Brace{\tilde e\cap \Compl{S}}\cup \Set{s}\in \Fkt{E}{H_S}$.
		
		Next, we consider the case $\tilde e\cap \Set{v^*,w^*}=\Set{w^*}$.
		In this case, $\tilde e\subseteq \Compl{S}$ and $\tilde e\in \Cut{H}{T}\setminus \Cut{H}{S}$.
		We get $\Fkt{\phi}{e} = \Brace{\Brace{\tilde e\cap \Compl{S}}\cap T}\cup \Set{w^*} = \Brace{e \setminus \Set{\Compl{t}}}\cup \Set{w^*}=\tilde e\in \Fkt{E}{H_S}$.
		
		It remains to consider the case $\tilde{e} \cap \Set{v^*,w^*}=\Set{v^*}$.
		Now, $\tilde e\subseteq S$.
		Let $\tilde M$ be a perfect matching containing $\tilde e$ and let $e'\in \tilde M\cap \Cut{H}{S}$.
		It follows that $w^*\in e'$, and $e'\in \Cut{H}{S}\setminus \Cut{H}{T}$ thus $e'\subseteq \Compl{T}$.
		Furthermore, $\Brace{e'\cap \Compl{S}} \cup \Set{s}= \Set{w^*,s}$ implies $\Set{w^*,s}\in \Fkt{E}{H_S}$.
		Now, $\Fkt{\phi}{e} = \Brace{\tilde e\cap \Brace{\Compl{S}\cap T}}\cup \Set{s}\cup \Set{w^*}=\Set{s,w^*}\in \Fkt{E}{H_S}$.
	\end{itemize}
	
	Next, we show that $\phi:\Fkt{E}{H_{\Compl{T}}}\rightarrow \Fkt{E}{H_S}$ is surjective, i.e.\@ for every $e\in \Fkt{E}{H_S}$ there exists $f\in \Fkt{E}{H_{\Compl{T}}}$ such that $\Fkt{\phi}{f}=e$.
	\begin{itemize}
		\item If $e\subseteq \Compl{S}\cap T$, then $e\in \Fkt{E}{H_{\Compl{T}}}$ and $\Fkt{\phi}{e}=e$.
		
		\item If $e\subseteq \Compl{S}$ and $w^*\in e$, then $e\in \Cut{H}{T}$, thus $\Brace{e\cap T}\cup \Set{\Compl{t}}\in \Fkt{E}{H_{\Compl{T}}}$, and $\Fkt{\phi}{\Brace{e\cap T}\cup \Set{\Compl{t}}}= e$.
		
		\item If $s\in e, w^*\notin e$, then $e=\tilde e\cap \Compl{S}\cup\Set{s}$ where $\tilde e\in \Cut{H}{S}$ with $v^*\in \tilde e$ and $w^*\notin \tilde e$.
		It follows that $\tilde e\notin \Cut{H}{T}$ and as $v^*\in \tilde e$ this means $\tilde e\subseteq T$.
		Thus, $\tilde e\in \Fkt{E}{H_{\Compl{T}}}$ and $\Fkt{\phi}{\tilde e} = e$.
		
		\item If $s, w^*\in e$, then $e=\Brace{\tilde e\cap \Compl{S}} \cup\Set{s}$ where $\tilde e\in \Cut{H}{S}$ with $w^*\in \tilde e$.
		If also $v^*\in \tilde e$, then $\tilde e=\Set{v^*, w^*}$ and $\Set{v^*, \Compl{t}}\in \Fkt{E}{H_{\Compl{T}}}$ with $\Fkt{\phi}{\Set{v^*, \Compl{t}}}=\Set{s,w^*}=e$.
		In the case $v^*\notin \tilde e$, we know that $\tilde e\notin \Cut{H}{T}$ and as $w^*\in \tilde e$ we get $\tilde e\subseteq \Compl{T}$.
		This means that $e=\Set{s,w^*}$.
		Now, let $\tilde M$ be a perfect matching containing $\tilde e$ and $e' \in \tilde{M}\cap \Cut{H}{T}$.
		We get $e' \subseteq S$ because $v^*\in e'$ and $e' \notin \Cut{H}{S}$.
		Therefore, $\Set{v^*,\Compl{t}}=\Brace{e' \cap T} \cup \Set{\Compl{t}}\in \Fkt{E}{H_{\Compl{T}}}$ and $\Fkt{\phi}{\Set{v^*, t}} = \Set{s,w^*}=e$.
	\end{itemize}
	It remains to show that $\Fkt{\phi}{e} = \Fkt{\phi}{e'}$ implies that $e$ and $e'$ are parallel for all $e,e' \in \Fkt{E}{H_{\Compl{T}}}$.
	This is clear if $\Fkt{\phi}{e} \subseteq \Compl{S}\cap T$ as $\phi$ is the identity on $\Compl{S}\cap T$.
	If $\Fkt{\phi}{e} \subseteq \Compl{S}$ and $w^*\in \Fkt{\phi}{e}$, then $\Compl{t} \in e \cap e'$ and $e \cap T = e' \cap T$, i.e.\@ $e$ and $e'$ are parallel.
	Now, if $s\in \Fkt{\phi}{e}$ and $w^*\notin \Fkt{\phi}{e}$, then $v^*\in e\cap e'$, and $\Compl{t} \notin e$, $\Compl{t}\notin e'$.
	This implies that $e,e'\subseteq T$ and thus $e \setminus \Set{v^*} = e' \setminus \Set{v^*}$.
	In total, $e$ and $e'$ contain the same vertices and are therefore parallel.
	If both $s$ and $w^*$ lie in $\Fkt{\phi}{e}$, then $\Fkt{\phi}{e} = \Set{s,w^*}$ and $e = \Set{v^*,\Compl{t}}=e'$.
	
	In total, $H_{\Compl{T}}$ and $H_S$ are isomorphic via $\phi$ up to parallel edges.
	
	By similar arguments, we can show that $H_{T}$ and $H_{\overline S}$ are isomorphic up to parallel hyperedges using the function $\phi:\Fkt{V}{H_{T}}\rightarrow \Fkt{V}{H_{\overline S}}$ defined by
	 \begin{align*}
	 	\phi(v) \coloneqq \begin{cases}
	  v, & v\in S\cap \overline T\\
	  	\overline s, & v = w^*\in \overline S\cap \overline T\\
	  	v^*, & v=t\in \Fkt{V}{H_{T}}\setminus \overline T.
	  \end{cases}
	  \end{align*}
	  \par \vspace{-1.7\baselineskip}
		\qedhere
\end{proof}
 
With \cref{lemma:separationcut} we have the last piece in place to be able to prove our main result: the uniqueness of the tight cut decomposition in \uniformable matching covered hypergraphs.

Instead of considering the outcome of two different tight cut decomposition procedures, we will use \cref{cor:decompandlaminarfamily} to directly compare the choices of tight cuts made during the process by looking at maximal families of pairwise laminar tight cuts.
By \cref{lemma:tightcut} we can apply the tight cuts in one of those families in any order and will always obtain the same list of hyperbricks.
This allows us to quickly settle two major cases by applying induction.
The remaining cases are those in which for every choice of two tight cuts, one from each of the two families, the two cuts are either the same but differ in shores, or cross.
These cases need further handling and some applications of the tools for uncrossing tight cuts obtained earlier in this section.
 
\begin{theorem}
 	\label{thm:unique}
 	Any two tight cut decomposition procedures of a matching covered, \uniformable hypergraph yield the same list of indecomposable hypergraphs up to  parallel edges.
\end{theorem}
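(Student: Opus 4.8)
The plan is to argue by induction on $\Abs{\Fkt{V}{H}}$, after translating the two decomposition procedures via \cref{cor:decompandlaminarfamily} into two maximal families $\mathcal{F}_1,\mathcal{F}_2$ of pairwise laminar non-trivial tight cuts. If $H$ is a hyperbrick both families are empty and there is nothing to prove, so assume they are non-empty. The induction engine is \cref{lemma:tightcut}: as soon as we exhibit a single non-trivial tight cut $\Cut{H}{C}$ that is common to both families, we may contract it and observe that each $\mathcal{F}_i$ restricts to a maximal laminar family on the two strictly smaller contractions $H_C$ and $H_{\Compl{C}}$. The induction hypothesis then makes the hyperbrick list of each contraction well-defined, and the list coming from $\mathcal{F}_i$ is exactly the hyperbricks of $H_C$ together with those of $H_{\Compl{C}}$; producing such a common $C$ therefore settles the comparison. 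Throughout I will use that the output of the procedure, and hence this list, is invariant under isomorphism up to parallel edges.

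The first, easy regime is when some cut of one family is laminar to the whole of the other. If $S\in\mathcal{F}_1$ has $\Cut{H}{S}$ laminar to every cut of $\mathcal{F}_2$, then maximality of $\mathcal{F}_2$ (we could otherwise enlarge it) forces $\Cut{H}{S}=\Cut{H}{T}$ for some $T\in\mathcal{F}_2$; the symmetric statement handles a laminar cut of $\mathcal{F}_2$. When the shores match, i.e.\ $S\in\{T,\Compl{T}\}$, the two pairs of contractions coincide and $C\coloneqq S$ is the required common cut. The only subtlety, genuinely new for \uniformable hypergraphs, is that $\Cut{H}{S}=\Cut{H}{T}$ can occur with $S\notin\{T,\Compl{T}\}$; here I would check, using connectivity and \cref{obs:tightcut}, that the contractions obtained from the two shores are still isomorphic up to parallel edges, so that the induction hypothesis applies to matched pieces.

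The core of the proof is the remaining, all-crossing regime, where no cut of either family is laminar to the whole of the other; thus every $S\in\mathcal{F}_1$ crosses some $T\in\mathcal{F}_2$ and vice versa. I would pick $S\in\mathcal{F}_1$ to be a leaf of the laminar family, i.e.\ an inclusion-minimal shore, and $T\in\mathcal{F}_2$ crossing it, and then apply \cref{cor:uncrossing2} to orient $T$ so that $\Cut{H}{S\cap T}$ and $\Cut{H}{S\cup T}$ are tight. Because $S$ is a leaf, the contraction $H_{\Compl{S}}$ is the hyperbrick associated with $S$ and so has no non-trivial tight cut; since $\Cut{H}{S\cap T}$ restricts by \cref{lemma:tightcut} to a tight cut of $H_{\Compl{S}}$ with shore $S\cap T\subsetneq S$, it cannot be non-trivial, and as $S\cap\Compl{T}\neq\emptyset$ this forces $\Abs{S\cap T}=1$. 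With only a single vertex of $T$ lying inside $S$, I would show that the image of $T$ under contracting $S$ remains a tight cut of the strictly smaller hypergraph $H_S$, so that the comparison of $\mathcal{F}_1$ and $\mathcal{F}_2$ descends into $H_S$, where the induction hypothesis and a further application of the uncrossing lemmas continue the argument. The genuinely irreducible base of this descent is precisely the degenerate configuration $\Abs{S\cap T}=1=\Abs{\Compl{S}\cap\Compl{T}}$ of \cref{lemma:separationcut}, at which two distinct tight cuts $\Cut{H}{S}$ and $\Cut{H}{T}$ yield the same pair of contractions up to parallel edges; the induction hypothesis applied to each of these identical pieces then equates the two hyperbrick lists.

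I expect the main obstacle to be exactly this crossing regime, and in particular the bookkeeping that drives a generic crossing pair down to the degenerate configuration of \cref{lemma:separationcut}. The uncrossing dichotomy of \cref{lem:uncrossing} and \cref{cor:uncrossing2} must be applied with a consistent orientation of $T$, and the parity statement \cref{obs:parity} is what guarantees this consistency and, crucially, that the two cuts stay distinct (so that one lands in \cref{lemma:separationcut} rather than in the same-cut subcase). One must also confirm that the induction measure strictly decreases --- which holds because $\Cut{H}{S}$ and $\Cut{H}{T}$ are non-trivial, so both contractions are smaller --- and that every reduction preserves the relation of isomorphism up to parallel edges, so that matching the contracted pieces pairwise indeed matches the full lists of hyperbricks.
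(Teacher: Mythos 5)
Your skeleton is the paper's own: induction on $\Abs{\Fkt{V}{H}}$, translation into two maximal laminar families via \cref{cor:decompandlaminarfamily}, uncrossing via \cref{cor:uncrossing2} and \cref{obs:parity}, and \cref{lemma:separationcut} for the degenerate configuration. Two steps, however, do not go through as written. The first concerns your induction engine. It only fires on a cut that is a \emph{common member} of both families, and in the all-crossing regime you never produce one: what your uncrossing argument actually yields is a non-trivial tight cut $U$ (a corner such as $S\cup T$) that is merely \emph{laminar to} one member of each family. The claim that ``the comparison of $\mathcal{F}_1$ and $\mathcal{F}_2$ descends into $H_S$'' is not available, because the members of $\mathcal{F}_2$ that cross $S$ do not correspond to cuts of $H_S$ at all --- \cref{lemma:tightcut} only transfers shores contained in $S$ or in $\Compl{S}$. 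The missing device is transitivity through an auxiliary family: given laminar $S\in\mathcal{F}_1$ and $T\in\mathcal{F}_2$ (or a corner $U$ laminar to both), one takes a third maximal laminar family $\mathcal{F}''$ containing both and compares $\mathcal{F}_1$ with $\mathcal{F}''$ (common member) and $\mathcal{F}''$ with $\mathcal{F}_2$ (common member). Without this, producing a corner cut concludes nothing. (A smaller issue in the same step: an inclusion-minimal member $S$ of $\mathcal{F}_1$ need not make $H_{\Compl{S}}$ a hyperbrick, since another member $R$ with $R\cup S=\Fkt{V}{H}$ has $\Compl{R}\subsetneq S$ and cuts the piece inside $S$ further; you must first normalise the shores of the laminar family.)

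The second gap is case coverage: crossing $S\in\mathcal{F}_1$, $T\in\mathcal{F}_2$ with $\Cut{H}{S}=\Cut{H}{T}$ falls through your analysis. It does not trigger your ``easy regime'' (which requires a cut laminar to \emph{all} of the other family), and it is excluded from both \cref{lemma:separationcut} and \cref{obs:parity}, whose hypotheses demand distinct cuts; \cref{obs:parity} cannot ``guarantee that the two cuts stay distinct'' --- distinctness is its hypothesis, not its conclusion. Your fallback, that equal cuts with different shores give isomorphic contractions, is false in general: $H_S$ has $\Abs{\Compl{S}}+1$ vertices and $H_T$ has $\Abs{\Compl{T}}+1$, and these need not agree. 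The paper resolves this case by noting that all four corners then define the same cut, so either some corner of size at least two is a non-trivial tight cut laminar to both shores (reducing to the laminar case), or $\Abs{\Fkt{V}{H}}=4$ and everything degenerates. On the positive side, your treatment of the final configuration $\Abs{S\cap T}=1=\Abs{\Compl{S}\cap\Compl{T}}$ --- \cref{lemma:separationcut} followed by the induction hypothesis applied to the pairwise isomorphic contractions --- is sound and in fact dispenses with the paper's additional argument involving a second cut $S'\in\mathcal{F}$.
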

\begin{proof}
 	As in the graph case, we use induction on the number of vertices of $H$.
 	If $H$ has at most three vertices, then $H$ has no non-trivial tight cuts.
 	Now, suppose the theorem holds for all hypergraphs $H$ with $\Abs{\Fkt{V}{H}} \leq l$.
 	Let $H$ be a hypergraph on $\Brace{l+1}$ vertices and $\mathcal{F}, \mathcal{F}'$ two maximal families of pairwise laminar tight cuts.
 	If $H$ has no non-trivial tight cuts, then $\mathcal{F} = \mathcal{F}' = \emptyset$.
 	Otherwise, we distinguish the following cases where the first and second one are identical to the ones in the graph case.
 	
 	\begin{description}
 		\item[Case 1.] $\mathcal{F}$ and $\mathcal{F}'$  have a common member $S$.
 			We can start a tight cut decomposition procedure with $S$ resulting in the matching covered, \uniformable hypergraphs $H_S$ and $H_{\Compl{S}}$ with at most $l$ vertices.
 			By induction hypothesis, $\mathcal{F} \setminus S$ and $\mathcal{F}' \setminus S$ yield the same decompositions on $H_S$ and $H_{\Compl{S}}$.
 			Thus, the decomposition procedures associated to $\mathcal{F}$ and $\mathcal{F}'$ yield the 
 			same list of indecomposable hypergraphs.
 			
 		\item[Case 2.] There exist $S \in \mathcal{F}$, $T \in \mathcal{F}'$ such that $S$ and $T$ are laminar.
 			Let $\mathcal{F}''$ be any maximal family of pairwise laminar tight cuts containing both $S$ and $T$.
 			By the first case, every tight cut decomposition associated to $\mathcal{F}$ and $\mathcal{F}''$ 
 			as well as $\mathcal{F}'$ and $\mathcal{F}''$ yield the same list of indecomposable hypergraphs.

 		\item[Case 3.] There exist $S \in \mathcal{F}$, $T \in \mathcal{F}'$ such that $\Cut{H}{S}=\Cut{H}{T}$.
 		 		Then, $\Cut{H}{S \cap T}=\Cut{H}{S\cap \Compl{T}}=\Cut{H}{\Compl{S} \cap \Compl{T}}=\Cut{H}{\Compl{S} \cap T}=\Cut{H}{S}$.
 		 		If one of the four sets $S \cap T, S\cap \Compl{T}, \Compl{S}\cap T, \Compl{S}\cap \Compl{T}$ has size at least two, then it defines a non-trivial tight cut laminar to $S$ and $T$, and we can proceed as in the second case.
 		 		Otherwise, $\Abs{S}=\Abs{T}=2$, $\Abs{\Fkt{V}{H}}=4$, and $H$ consists of edges of the form $\Set{\Fkt{V}{H}}$.
 		 		In this case, each of the four hypergraphs $H_S, H_{\Compl{S}}, H_T, H_{\Compl{T}}$ has three vertices and some parallel edges containing all three vertices.
 		 		As a hypergraph with at most three vertices has only trivial cuts, we have $\mathcal F=\Set{S}$ and $\mathcal F'=\Set{T}$ and the tight cut contractions w.r.t.\@ $\Cut{H}{S}$ and $\Cut{H}{T}$ are isomorphic.

 		 \item[Case 4.] Suppose that neither of the previous cases holds, and choose $S \in \mathcal{F}$, $T \in \mathcal{F}'$ arbitrary.
 		 	
	 		By \cref{cor:uncrossing2}, we can assume that $S \cap T$ and $S \cup T$ define tight cuts.
 		 	If $\Abs{S \cap T} > 1$ or $\Abs{\Fkt{V}{H} \setminus \Brace{S\cup T}} > 1$, then $U \coloneqq S \cap T$ or $U \coloneqq S \cup T$ defines a non-trivial tight cut which is laminar to both $S$ and $T$.
 			Let $\mathcal F''$ be a maximal family of pairwise laminar tight cuts containing $U$.
 		 	By the second case, $\mathcal F$ and $\mathcal F''$ as well as $\mathcal F''$ and $\mathcal F'$ yield the same tight cut decomposition.
 		 	 		
 			In the remainder we assume that $\Abs{S \cap T} = 1$ and $\Abs{\Compl{S} \cap \Compl{T}} = 1$.
 			If $\mathcal{F}$ and $\mathcal{F}'$ consist of just one cut, then by \cref{lemma:separationcut} the tight cut contractions with respect to $\Cut{H}{S}$ and $\Cut{H}{T}$ yield isomorphic hypergraphs.
 			Otherwise, we may assume by symmetry that $\mathcal{F}$ contains another cut $\Cut{H}{S'}$.
 			As $S'$ is laminar to $S$, we can assume $S' \cap S = \emptyset$, i.e.\@ $S'\subseteq \Compl{S}$.
 			If $S'$ and $T$ are laminar or $\Cut{H}{S'}=\Cut{H}{T}$, then we are in one of the previous cases.
 				 	 	
 			It remains to consider the case that $S'$ and $T$ are crossing sets defining different cuts. 
 			We know that $S' \cap T \neq \emptyset$ and $S' \cap \Compl{T} \neq \emptyset$.
 			It follows that $S'\cap \Compl{T} = S' \cap \Compl{S} \cap \Compl{T} = \Compl{S}\cap \Compl{T}$ because $\Abs{\Compl{S}\cap \Compl{T}} = 1$.
 			Let $w^* \in \Compl{S}\cap \Compl{T}$.
 			We can write $S'$ as $S'=\Set{w^*}\cup \Brace{S'\cap T}$.
 			Suppose that $\Cut{H}{S'\cap T}$ is a tight cut, and let $m : \Fkt{V}{H} \to \Z_{\geq 1}$ be a function such that $\Multi{H}{m}$ is $r$-regular for some $r \in \Z$.
 			On the one hand, due to \cref{obs:parity}, $\Fkt{m}{S'} \equiv_r \Fkt{m}{S'\cap T} \equiv_r \Fkt{m}{T} \equiv_r \Fkt{m}{T \cap S}$.
 			On the other hand, $\Fkt{m}{S'} = \Fkt{m}{S'\cap T} + \Fkt{m}{S'\cap \Compl{T}} = \Fkt{m}{S'\cap T} + \Fkt{m}{\Compl{S}\cap \Compl{T}} \equiv_r \Fkt{m}{T} - \Fkt{m}{S\cap T} \equiv_r 0$.
 			However, as $\Cut{H}{S'}$ is a tight cut $\Fkt{m}{S'}$ cannot be divisible by $r$.
 			Thus, $\Cut{H}{S'\cap T}$ is not tight and therefore $\Cut{H}{\Compl{S}'\cap T}$ is a tight cut.
 			This cut is non-trivial because otherwise $S'\cap T = \overline S\cap T$ would follows, which implies that $S'=\{w^*\}\cup \left(S'\cap T\right) = \overline S$, contradicting the choice of $S'$. This means that $U=\overline S'\cap T$ defines a non-trivial tight cut that does not cross any cut of $\mathcal F$ or $\mathcal F'$. Again, by considering a maximal family $\mathcal F''$ of pairwise non-trivial tight cuts containing $U$ and using the second case, we get that $\mathcal F$ and $\mathcal F'$ yield equivalent tight cut decompositions. \qedhere
 	\end{description}
 \end{proof}

\section{The Perfect Matching Polytope and Tight Cuts}\label{sec:polytope}


One of the original motivations for the definition of tight cuts was to determine the dimension of the perfect matching polytope of a graph.
Edmonds gave the following description of said polytope.
\begin{theorem}[Edmonds, 1965 \cite{edmonds1965maximum}]\label{thm:edmondsperfmat}
	The perfect matching polytope of a graph $G$ is determined by the following set of inequalities:
	\begin{enumerate}
		\renewcommand{\labelenumi}{\textbf{\theenumi}}
		\renewcommand{\theenumi}{(C\arabic{enumi})}
		\item \label{constr:nonneg} $\Fkt{x}{e}\geq 0 ~\text{for all}~e\in\Fkt{E}{G}$,
		\item \label{constr:degree} $\Fkt{x}{\Cut{}{u}} = 1~\text{for all}~u\in\Fkt{V}{G}$,
		\item \label{constr:oddset} $\Fkt{x}{\Cut{}{U}}\geq 1~\text{for all}~U\subseteq\Fkt{V}{G}~\text{with}~\Abs{U}\geq 
	3~\text{odd}$.
	\end{enumerate}
\end{theorem}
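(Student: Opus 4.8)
The plan is to show that the polytope $P$ defined by the inequalities \ref{constr:nonneg}--\ref{constr:oddset} coincides with the perfect matching polytope $\mathrm{PM}(G)$, i.e.\ the convex hull of the incidence vectors of perfect matchings of $G$. One inclusion is immediate: the incidence vector of any perfect matching satisfies \ref{constr:nonneg} and \ref{constr:degree} by definition, and it satisfies \ref{constr:oddset} because a perfect matching must use at least one edge leaving any odd set by a parity argument, so $\mathrm{PM}(G) \subseteq P$. The substance is the reverse inclusion, and since both sets are polytopes it suffices to prove that every vertex $x^{*}$ of $P$ is integral, hence the incidence vector of a perfect matching. I would prove this by induction on $\Abs{\Fkt{E}{G}}$ via a minimal counterexample: suppose $G$ is a graph with the fewest edges admitting a vertex $x^{*}$ of $P$ that is not the incidence vector of a perfect matching.

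First I would pin down the support of $x^{*}$. If $\Fkt{x^{*}}{e} = 0$ for some edge $e$, then $x^{*}$ restricted to $G - e$ is still a vertex of the corresponding polytope and, by minimality, lies in $\mathrm{PM}(G-e) \subseteq \mathrm{PM}(G)$, a contradiction; so $x^{*}$ is strictly positive on every edge. If $\Fkt{x^{*}}{e} = 1$ for some $e = uv$, then \ref{constr:degree} forces all other edges at $u$ and $v$ to have value $0$, contradicting positivity unless there are none, in which case deleting $u, v$ yields a smaller counterexample. Hence $0 < \Fkt{x^{*}}{e} < 1$ for all $e$, so \ref{constr:degree} forces every vertex to have degree at least two and $\Abs{\Fkt{E}{G}} \geq \Abs{\Fkt{V}{G}}$.

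Next I would locate a tight odd-set constraint. Being a vertex, $x^{*}$ makes $\Abs{\Fkt{E}{G}}$ linearly independent constraints tight; as \ref{constr:nonneg} is slack everywhere, these are the degree constraints \ref{constr:degree} together with some odd-set constraints \ref{constr:oddset}. If no constraint of type \ref{constr:oddset} were tight, then the degree constraints alone (of rank at most $\Abs{\Fkt{V}{G}}$) would determine $x^{*}$, forcing $\Abs{\Fkt{E}{G}} = \Abs{\Fkt{V}{G}}$ and hence $G$ to be a disjoint union of cycles; but then an odd cycle component $C$ would violate \ref{constr:oddset} at $U = \Fkt{V}{C}$, whereas an even cycle component leaves the degree constraints with a one-dimensional solution set, so they cannot pin $x^{*}$ down to a vertex --- a contradiction either way. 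Thus some tight \ref{constr:oddset} constraint exists, and since an odd set whose complement is a single vertex yields the same row as a degree constraint, a genuinely rank-contributing one must have $\Abs{U} \geq 3$ and $\Abs{\Compl{U}} \geq 3$; I fix such a $U$ with $\Fkt{x^{*}}{\Cut{}{U}} = 1$.

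Finally I would perform a tight-cut contraction and glue. Form $G_1$ by contracting $\Compl{U}$ to a single vertex and $G_2$ by contracting $U$ to a single vertex; both have fewer edges. The points induced by $x^{*}$ on $G_1$ and $G_2$ satisfy the defining inequalities there (an odd set of a contraction pulls back to an odd set of $G$, and the degree and non-negativity constraints are inherited), so by the inductive hypothesis each is a convex combination of perfect matchings of $G_1$, respectively $G_2$. The main obstacle is the gluing step: since $\Fkt{x^{*}}{\Cut{}{U}} = 1$, every perfect matching in either contraction uses exactly one edge of the cut, and I would reconcile the two decompositions so that the total weight placed on each cut edge agrees --- both equal $x^{*}$ on $\Cut{}{U}$ --- and then splice a matching of $G_1$ with one of $G_2$ agreeing on the chosen cut edge into a perfect matching of $G$. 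I expect this reconciliation to be the crux; I would handle it by peeling off one cut edge at a time (an induction on the number of cut edges carrying positive weight, equivalently via the integrality of the associated transportation polytope), thereby exhibiting $x^{*}$ as a convex combination of perfect matchings of $G$ and contradicting the choice of $x^{*}$.
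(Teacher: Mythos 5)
The paper does not prove this statement at all: it is imported verbatim from Edmonds~\cite{edmonds1965maximum} as \cref{thm:edmondsperfmat} and used as a black box in \cref{sec:polytope}, so there is no internal proof to compare yours against. Judged on its own, your sketch is the classical argument (essentially Lov\'asz's proof) and is correct in outline: minimal counterexample, reduction to a fractional vertex with $0 < \Fkt{x^*}{e} < 1$ on every edge, a rank count producing a tight odd-set constraint $\Fkt{x^*}{\Cut{}{U}} = 1$ with $\Abs{U} \geq 3$ and $\Abs{\Compl{U}} \geq 3$, contraction of both shores, and splicing the two resulting convex combinations back together. Two small points deserve explicit verification. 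First, the induction needs both contractions to have strictly fewer edges; this holds because if, say, $G[\Compl{U}]$ were edgeless, then every edge at a vertex of $\Compl{U}$ would lie in the cut and summing the degree constraints over $\Compl{U}$ would give $\Fkt{x^*}{\Cut{}{U}} = \Abs{\Compl{U}} \geq 3$, contradicting tightness. Second, you should note why an odd set of a contraction containing the new vertex pulls back to an odd set of $G$: the contracted shore has odd cardinality, so the parity is preserved. The gluing step you correctly identify as the crux is exactly the content of \cref{prop:decomposepolytope}, which the paper proves in the more general hypergraph setting by the same ``peel off one cut edge at a time'' induction you propose; so that part of your plan is sound and is in fact carried out (for hypergraphs) elsewhere in the paper.
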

We call the inequalities from \cref{constr:degree} \emph{degree constraints} and the inequalities from \cref{constr:oddset} \emph{odd set constraints}.
A non-trivial tight cut for a graph $G$ is the special case of an odd set constraint.
Edmonds, Pulleyblank, and Lov{\'a}sz showed that the dimension of the perfect matching polytope of a graph is determined by its number of vertices, edges, and the number of bricks in a tight cut decomposition.
Furthermore, one can use the tight cut decomposition to give a non-redundant description of the perfect matching polytope~\cite{edmonds1982}.

An important observation is that a tight cut induces a decomposition of the perfect matching polytope.
We establish a similar link between the perfect matching polytope of a hypergraph and the perfect matching polytopes of its tight cut contractions.

Bipartite graphs play a special role in the theory of matching covered graphs.
In many cases they are more accessible.
The fact that, usually, one distinguishes between bricks, non-bipartite graphs without non-trivial tight cuts, and braces, bipartite graphs without non-trivial tight cuts, can be understood in the context of tight cut contractions.
To be more precise: Any tight cut contraction of a bipartite matching covered graph is again bipartite.
Hence the tight cut decomposition of a bipartite graph will never contain a brick and therefore, in many cases it suffices to describe exactly the braces to make statements about properties of bipartite matching covered graphs.

There are several possible ways to generalise bipartite graphs to hypergraphs~\cite{bergebook}.
One of them is the notion of a balanced hypergraph as follows.
A \emph{cycle} in a hypergraph $H$ is a sequence $C=x_1e_1x_2\dots x_te_tx_1$ with $x_i\neq x_j$ and $e_i\neq e_j$ for all $i\neq j$ and $x_i,x_{i+1}\in e_i$ for all $i\in\Set{1,\dots t}$ with $x_{t+1}=x_1$.
The number of edges $t$ in $C$ is the \emph{length} of $C$.
A cycle $C$ is \emph{strong}, if $x_j\in e_i$ implies $j\in\Set{i,i+1}$ with $x_{t+1}=x_1$.

\begin{definition}[Balanced Hypergraph]
	A hypergraph is called \emph{balanced} if it does not contain a strong cycle of odd length.
\end{definition}

There are several characterisations of balanced hypergraphs, some of these can be found in Berge's book on hypergraphs.
We will add a new characterisation of balanced hypergraphs in terms of their perfect matching polytope.
This characterisation together with the decomposition theory for the perfect matching polytope of hypergraphs obtained from the tight cut decomposition will yield the main result of this section:

Tight cut contractions of balanced \uniformable hypergraphs are again balanced.

We start with a formal definition of the perfect matching polytope of a hypergraph.
First, we define the incidence vector of a set, which is the usual way to assign a vector to a set.

\begin{definition}
Given some ground set $N$, the incidence vector of a subset $S\subseteq N$ is the vector $\chi^S\in \mathds{Q}^N$ defined by
\[
\chi^S_i\coloneqq \begin{cases}
1 & \text{ if }i\in S\\
0 & \text{ else.}
\end{cases}
\]
\end{definition}

Now, we can define the perfect matching polytope as the convex hull of the incidence vectors of all perfect matchings.

\begin{definition}
Let $H$ be a hypergraph and $\mathcal{M}$ its set of perfect matchings.
The perfect matching polytope of $H$ is denoted by $\MatPol{H}$ and defined as 
\[
	\MatPol{H} \coloneqq \CondSet{\sum_{M \in \mathcal{M}} \lambda_M \chi^M}{\sum_{M \in \mathcal{M}} \lambda_M = 1, \lambda_M \geq 0~\text{for all}~ M \in \mathcal{M}}.
\]
\end{definition}

We have seen that if $\Cut{H}{S}$ is a tight cut in a hypergraph $H$, then every perfect matching in $H$ corresponds to a pair $M_S, M_{\Compl{S}}$ of perfect matchings in $H_S$ and $H_{\Compl{S}}$ that agree on $\Cut{H}{S}$.
We say that $M_S$ and $M_{\Compl{S}}$ agree on $\Cut{H}{S}$ if there exists an edge $e\in \Cut{H}{S}$ such that $e_s\in \Cut{H_S}{s}\cap M_S$ and $e_{\Compl{s}}\in \Cut{H_{\Compl{S}}}{\Compl{s}}\cap M_{\Compl{S}}$.
This decomposition carries over to the corresponding perfect matching polytopes.
 
First, we show how to decompose a vector in $\mathds{Q}^{\Fkt{E}{H}}$ into vectors in $\mathds{Q}^{\Fkt{E}{H_S}}$ and $\mathds{Q}^{\Fkt{E}{H_{\Compl{S}}}}$.
Every $e\in \Cut{H}{S}$ corresponds to an edge $e_s\in \Fkt{E}{H_S}$ and an edge $e_{\Compl{s}}\in \Fkt{E}{H_{\Compl{S}}}$.
Thus, every $x\in \mathds{Q}^{\Fkt{E}{H}}$ corresponds to $x^S\in \mathds{Q}^{\Fkt{E}{H_S}}$ and $x^{\Compl{S}}\in \mathds{Q}^{\Fkt{E}{H_{\Compl{S}}}}$ via $\Fkt{x^S}{e_s} \coloneqq \Fkt{x^{\Compl{S}}}{e_{\Compl{s}}} \coloneqq  \Fkt{x}{e} $ for all $e \in \Cut{H}{S}$ and $\Fkt{x^S}{e} \coloneqq \Fkt{x}{e} $, $\Fkt{x^{\Compl{S}}}{e'} \coloneqq \Fkt{x}{e'}$ for $e \subseteq \Compl{S}$ and $e' \subseteq S$.
On the other hand, if $x^S \in \mathds{Q}^{\Fkt{E}{H_S}}$ and $x^{\Compl{S}} \in \mathds{Q}^{\Fkt{E}{H_{\Compl{S}}}}$ are given with $\Fkt{x^S}{e_s} = \Fkt{x^{\Compl{S}}}{e_{\Compl{s}}}$ for all $e\in \Cut{H}{S}$, then we say that \emph{$x^S$ and $x^{\Compl{S}}$ agree on $\Cut{H}{S}$}.
In this case, we define a vector $x^S\oplus x^{\Compl{S}}\in \mathds{Q}^{\Fkt{E}{H}}$ by
\begin{align*}
	 \Fkt{x}{e} \coloneqq \begin{cases}
	\Fkt{x^S}{e} & \text{if }e\subseteq \Compl{S}\\
	\Fkt{x^{\Compl{S}}}{e} & \text{if }e\subseteq S\\
	\Fkt{x^S}{e_s} & \text{if }e\in \Cut{H}{S}.
	\end{cases}
\end{align*} 
This implies that there is a one-to-one correspondence between vectors $x\in \mathds{Q}^{\Fkt{E}{H}}$ and pairs of vectors $x^S\in \mathds{Q}^{\Fkt{E}{H_S}}$, $x^{\Compl{S}}\in \mathds{Q}^{\Fkt{E}{H_{\Compl{S}}}}$ agreeing on $\Cut{H}{S}$.
For vectors we use the same shorthand as for multiplicity that is if $x\in \mathds{Q}^{\Fkt{E}{H}}$ and $E' \subseteq \Fkt{E}{H}$, then $\Fkt{x}{E'} \coloneqq \sum_{e \in E'} \Fkt{x}{e}$.

\begin{proposition}
	\label{prop:decomposepolytope}
	If $\Cut{H}{S}$ is a tight cut in a matching covered hypergraph $H$, 
	then $x \in \MatPol{H}$ if and only if there exist $x^S\in \MatPol{H_S}$, $x^{\Compl{S}}\in \MatPol{H_{\Compl{S}}}$ agreeing on $\Cut{H}{S}$ with $x = x^S\oplus x^{\Compl{S}}$.
\end{proposition}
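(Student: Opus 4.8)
The plan is to lift the bijective correspondence of \cref{prop:matcoveredtight} — between perfect matchings of $H$ and pairs of perfect matchings of $H_S$, $H_{\Compl{S}}$ agreeing on $\Cut{H}{S}$ — from individual matchings to their convex hulls. Both directions reduce to routine coordinate checks over the three edge types ($e \subseteq \Compl{S}$, $e \subseteq S$, and $e \in \Cut{H}{S}$), but only after the masses on the two sides have been matched up correctly; arranging this matching is the real content.

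For the forward direction I would take $x = \sum_{M \in \mathcal{M}} \lambda_M \chi^M \in \MatPol{H}$ and send each $M$ to its pair $(M_S, M_{\Compl{S}})$, then set $x^S \coloneqq \sum_M \lambda_M \chi^{M_S}$ and $x^{\Compl{S}} \coloneqq \sum_M \lambda_M \chi^{M_{\Compl{S}}}$. These are convex combinations, so $x^S \in \MatPol{H_S}$ and $x^{\Compl{S}} \in \MatPol{H_{\Compl{S}}}$. Agreement on $\Cut{H}{S}$ is immediate coordinatewise: for $e \in \Cut{H}{S}$ one has $\chi^{M_S}_{e_s} = \chi^{M_{\Compl{S}}}_{e_{\Compl{s}}}$, both equal to $1$ exactly when $e$ is the unique cut edge of $M$. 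Since $M_S$ retains the edges of $M$ inside $\Compl{S}$ and $M_{\Compl{S}}$ those inside $S$, comparing $\Fkt{x}{e}$ with the defining cases of $x^S \oplus x^{\Compl{S}}$ gives $x = x^S \oplus x^{\Compl{S}}$.

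The main obstacle is the converse. Here one is handed arbitrary representations $x^S = \sum_i \alpha_i \chi^{N_i}$ and $x^{\Compl{S}} = \sum_j \beta_j \chi^{P_j}$, and an arbitrary pair $(N_i, P_j)$ need \emph{not} agree on the cut, so one cannot simply form all products and apply \cref{prop:matcoveredtight}. The leverage is tightness: each perfect matching of $H_S$ meets $\Cut{H_S}{s}$ in exactly one edge, and likewise for $H_{\Compl{S}}$. Grouping the $N_i$ by their cut edge into classes $A_e \coloneqq \CondSet{i}{e_s \in N_i}$ and the $P_j$ into $B_e \coloneqq \CondSet{j}{e_{\Compl{s}} \in P_j}$ yields marginal masses $\sum_{i \in A_e} \alpha_i = \Fkt{x^S}{e_s}$ and $\sum_{j \in B_e} \beta_j = \Fkt{x^{\Compl{S}}}{e_{\Compl{s}}}$, which coincide by the agreement hypothesis; write $p_e$ for this common value. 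Note $\sum_{e} p_e = 1$ on both sides.

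It remains to couple the two distributions edge by edge. For each $e$ with $p_e > 0$ I would set $\gamma_{ij} \coloneqq \alpha_i \beta_j / p_e$ for $i \in A_e$, $j \in B_e$; these are nonnegative and satisfy $\sum_{j \in B_e} \gamma_{ij} = \alpha_i$ and $\sum_{i \in A_e} \gamma_{ij} = \beta_j$, solving the transportation problem with the prescribed marginals. Every paired $(N_i, P_j)$ now agrees on $\Cut{H}{S}$, so by \cref{prop:matcoveredtight} it assembles into a perfect matching $M_{ij}$ of $H$ with $\chi^{M_{ij}} = \chi^{N_i} \oplus \chi^{P_j}$. The candidate $y \coloneqq \sum_{e : p_e > 0} \sum_{i \in A_e,\, j \in B_e} \gamma_{ij}\, \chi^{M_{ij}}$ is a convex combination, since $\sum \gamma_{ij} = \sum_i \alpha_i = 1$, whence $y \in \MatPol{H}$. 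A final coordinate check confirms $y = x$: for $e \subseteq \Compl{S}$ the inner sum over $j$ collapses via $\sum_{j} \gamma_{ij} = \alpha_i$ to recover $\Fkt{x^S}{e} = \Fkt{x}{e}$, symmetrically for $e \subseteq S$, and for a cut edge $e$ only the class-$e$ pairs contribute, giving $\Fkt{y}{e} = p_e = \Fkt{x}{e}$. The only delicate point is this last identity — that each cut coordinate collects mass solely from its own class — which is precisely what tightness and the marginal-collapse identities guarantee.
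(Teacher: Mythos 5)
Your proof is correct, and the converse direction takes a genuinely different route from the paper's. The paper proves the converse by induction on the maximum number of summands in the two convex combinations: it fixes one cut edge $e^*$, splits each of $x^S$ and $x^{\Compl{S}}$ into the part supported on matchings through $e^*$ and the rest, recurses on both parts, and only in the degenerate case where all mass sits on $e^*$ does it pair matchings directly via a greedy procedure that repeatedly transfers $\min\Set{\lambda'_i,\lambda''_j}$. Your argument dispenses with the induction entirely: you partition the matchings of both contractions by their unique cut edge (which is exactly where tightness enters, ensuring the classes $A_e$, $B_e$ partition the index sets and that the marginals $p_e$ match by the agreement hypothesis), and then couple within each class by the product distribution $\gamma_{ij}=\alpha_i\beta_j/p_e$. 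This is a one-shot construction of the convex combination; the paper's greedy pairing in its $\Lambda=1$ case is solving the same transportation problem with a sparser (northwest-corner) solution, and its induction is in effect peeling off one cut edge at a time where you handle all of them simultaneously. Your version is shorter and arguably more transparent; the paper's yields a representation with fewer matchings. One small point worth making explicit in a final write-up: for a class with $p_e=0$ every $\alpha_i$ with $i\in A_e$ and every $\beta_j$ with $j\in B_e$ vanishes, so discarding those classes loses no mass in the coordinate checks.
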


\begin{proof}
	First, suppose $x$ lies in the perfect matching polytope of $H$.
	This means that there exist perfect matchings $M_1,\ldots, M_k$ and scalars $\lambda_1,\ldots, \lambda_k\geq 0$ with $x=\sum_{i=1}^k\lambda_i\chi^{M_i}$ and $\sum_{i=1}^k\lambda_i=1$.
	Every perfect matching $M_i$ corresponds to a pair of perfect matchings $M'_{i}, M''_{i}$ in $H_{S}$ and $H_{\Compl{S}}$ agreeing on $\Cut{H}{S}$.
	So the edge $e\in M_i\cap\Cut{H}{S}$ corresponds to the edges $e_s\in M'_{i}$ and $e_{\Compl{s}}\in M''_{i}$.
	Furthermore, since $\Cut{H}{S}$ is a tight cut it does not contain any other edge of $M_i$.
	
	Clearly,the vectors $x^S$ and $x^{\Compl{S}}$ defined by $x^S\coloneqq\sum_{i=1}^k\lambda_i\chi^{M'_{i}}$ and $x^{\Compl{S}}\coloneqq\sum_{i=1}^k\lambda_i\chi^{M''_{i}}$ agree on $\Cut{H}{S}$ and $x = x^S\oplus x^{\Compl{S}}$.
	Furthermore, $x^S$ lies in the perfect matching polytope of $H_S$, and $x^{\Compl{S}}$ in the perfect matching polytope of $H_{\Compl{S}}$.

	For the other direction, write $x^S$ and $x^{\Compl{S}}$ as a convex combination of characteristic vectors of perfect matchings: $x^S= \sum_{i=1}^k\lambda'_i\chi^{M'_{i}}$ and $x^{\Compl{S}}=\sum_{i=1}^{k'}\lambda''_i\chi^{M''_i}$.
	
	We show that $x$ lies in the perfect matching polytope of $H$ by induction on $t \coloneqq \max\Set{k,k'}$. 
	
	If $k=k'=1$, then $x^S=\chi^{M'_1}$, $x^{\Compl{S}}=\chi^{M''_1}$, and $M'_1$ and $M''_1$ agree on $\Cut{H}{S}$ as $\Fkt{x^S}{e_s} = \Fkt{x^{\Compl{S}}}{e_{\Compl{s}}}$ for all $e\in \Cut{H}{S}$.
	This means that $x=\chi^{M_1}$ where $M_1$ is the unique matching in $H$ corresponding to $M'_1, M''_1$.
	Now, assume $x$ lies in the perfect matching polytope of $H$ if $\max\Set{k,k'} \leq t$.
	
	For the induction step we can assume $k=\max\Set{k,k'} = t+1$.
	Fix some edge $e^*\in \Cut{H}{S}$ with $\Fkt{x^S}{e^*_s} > 0$, and let $I_S\coloneqq\CondSet{i}{e^*_s\in M'_i, i=1,\ldots,k}$, $I_{\Compl{S}}\coloneqq\CondSet{i}{e^*_{\Compl{s}}\in M''_i, i=1,\ldots, k'}$.
	Because of $\Fkt{x^S}{e^*_s} = \Fkt{x^{\Compl{S}}}{e^*_{\Compl{s}}}$, we know that $\sum_{i\in I_S}\lambda'_i=\sum_{i\in I_{\Compl{S}}}\lambda''_i$.
	We denote this value by $\Lambda$.
	If $\Lambda =1$, then $I_S=\{1,\ldots, k\}$, $I_{\Compl{S}}=\{1,\ldots, k'\}$, and every pair of perfect matchings $M'_i$ and $M''_j$ for $i=1,\ldots, k$, $j=1,\ldots, k'$ agrees on $\Cut{H}{S}$ which means that it corresponds to a unique perfect matching in $H$ which we denote by $M_{i,j}$.
	Now, the following procedure writes $x$ as a convex combination of perfect matchings in $H$:

	While not all $\lambda'_i=0$ choose $i\in \Set{1,\ldots, k}, j\in \Set{1,\ldots, k'}$ with $\lambda'_i>0$ and $\lambda''_j>0$, set $\mu_{i,j} \coloneqq \min\Set{\lambda'_i, \lambda''_j}$, and decrease $\lambda'_i$ and $\lambda''_j$ by $\mu_{i,j}$.

	In every step of this procedure at least one of $\lambda'_i$ or $\lambda''_j$ becomes zero, thus it terminates after a finite number of steps.
	In the end $\sum_{i,j}\mu_{i,j}\chi^{M_{i,j}} = x$ and $\sum_{i,j}\mu_{i,j} = 1$ hold.

	If $\Lambda<1$, then we consider the two vectors 
	\begin{equation*}
		y^S \coloneqq \frac{1}{1-\Lambda}\Brace{\sum_{i\in [k]\setminus I_S}\lambda'_i\chi^{M'_i}} ~\text{and}~ y^{\Compl{S}} = \frac{1}{1-\Lambda} \Brace{\sum_{i\in [k']\setminus I_{\Compl{S}}}\lambda''_i\chi^{M''_i}}.
	\end{equation*}
	The vector $y^S$ lies in the perfect matching polytope of $H_S$, $y^{\Compl{S}}$ lies in the perfect matching polytope of $H_{\Compl{S}}$, and they are written as a convex combination of less than $t+1$ characteristic vectors of perfect matchings.
	Furthermore, for every $e\in \Cut{H}{S}\setminus \{e^*\}$ we have $\Fkt{y^S}{e_s} = \Fkt{x^S}{e_s} = \Fkt{x^{\Compl{S}}}{e_{\Compl{s}}} = \Fkt{y^{\Compl S}}{e_{\Compl{s}}}$, and $\Fkt{y^S}{e^*_s} = 0 = \Fkt{y^{\Compl{S}}}{e^*_{\Compl{s}}}$.
	
	By induction hypothesis, it follows that $y = y^S\oplus y^{\Compl{S}}$ lies in the perfect matching polytope of $H$.
	On the other hand, also $z^S = \nicefrac{\Brace{\sum_{i\in I_S}\lambda'_i\chi^{M'_i}}}{\Lambda}$, and $z^{\Compl{S}} = \nicefrac{\Brace{\sum_{i\in I_{\Compl{S}}}\lambda''_i\chi^{M''_i}}}{\Lambda}$ define vectors of the perfect matching polytopes of $H_S$ and $H_{\Compl{S}}$ agreeing on $\Cut{H}{S}$ with less then $t+1$ summands in each convex combination.
	Thus, also $z \coloneqq z^S\oplus z^{\Compl{S}}$ lies in the perfect matching polytope of $H$.
	This implies that $x$ is an element of the perfect matching polytope of $H$ as $x = \Brace{1-\Lambda}y+\Lambda z$.
\end{proof}

The polytope defined by \cref{constr:nonneg} (non-negativity) and \cref{constr:degree} (degree) is called the fractional perfect matching polytope in the graph case.
In the hypergraph case, we can define this polytope in the same way and denote the fractional perfect matching polytope of a hypergraph $H$ by $\MatPolFrac{H}$.
Then, $\MatPol{H}\subseteq \MatPolFrac{H}$ with equality if and only if $\MatPolFrac{H}$ defines an integral polytope.
\cref{prop:decomposepolytope} implies that tight cut contractions preserve the integrality of the fractional perfect matching polytope.

\begin{corollary}
	\label{cor:integralpoly}
	Let $H$ be a matching covered hypergraph with a tight cut $\Cut{H}{S}$.
	If the fractional perfect matching polytope of $H$ is integral, then the fractional perfect matching polytopes of the two contractions $H_S$ and $H_{\Compl{S}}$ are integral. 
\end{corollary}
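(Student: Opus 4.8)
The plan is to establish the single nontrivial inclusion $\MatPolFrac{H_S}\subseteq\MatPol{H_S}$ (its reverse always holds, and the case of $H_{\Compl{S}}$ is symmetric). Assuming $\MatPol{H}=\MatPolFrac{H}$, I fix an arbitrary $x^S\in\MatPolFrac{H_S}$ and aim to realise it as a convex combination of perfect matchings of $H_S$. The strategy is to lift $x^S$ to a point of $\MatPolFrac{H}$, invoke the integrality of $H$ to land in $\MatPol{H}$, and then read off the desired decomposition through \cref{prop:decomposepolytope}.

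The substantive step is building a companion vector $x^{\Compl{S}}\in\MatPolFrac{H_{\Compl{S}}}$ that agrees with $x^S$ on $\Cut{H}{S}$. This is where matching-coveredness is used: by \cref{prop:matcoveredtight} the contraction $H_{\Compl{S}}$ is matching covered, so for every $e\in\Cut{H}{S}$ there is a perfect matching $N^e$ of $H_{\Compl{S}}$ containing $e_{\Compl{s}}$, and since $\Compl{s}$ is a single vertex, $e_{\Compl{s}}$ is the unique edge of $N^e$ incident to $\Compl{s}$. The degree constraint of $x^S$ at the contracted vertex $s$ reads $\sum_{e\in\Cut{H}{S}}\Fkt{x^S}{e_s}=1$, so $x^{\Compl{S}}\coloneqq\sum_{e\in\Cut{H}{S}}\Fkt{x^S}{e_s}\,\chi^{N^e}$ is a genuine convex combination and hence lies in $\MatPol{H_{\Compl{S}}}\subseteq\MatPolFrac{H_{\Compl{S}}}$. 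Evaluating on a cut edge $f_{\Compl{s}}$ leaves only the term $e=f$, giving $\Fkt{x^{\Compl{S}}}{f_{\Compl{s}}}=\Fkt{x^S}{f_s}$, i.e.\ the two vectors agree on $\Cut{H}{S}$.

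I would then form $x\coloneqq x^S\oplus x^{\Compl{S}}$ and check $x\in\MatPolFrac{H}$. Non-negativity is immediate, and for the degree constraints one notes that the edges of $H$ incident to a vertex $v\in\Compl{S}$ are in a value-preserving bijection with the edges of $H_S$ incident to $v$ (and symmetrically for $v\in S$ via $H_{\Compl{S}}$), so each degree constraint of $x$ collapses to one already satisfied by $x^S$ or $x^{\Compl{S}}$. By the integrality hypothesis, $x\in\MatPolFrac{H}=\MatPol{H}$, so \cref{prop:decomposepolytope} produces $x=\tilde{x}^S\oplus\tilde{x}^{\Compl{S}}$ with $\tilde{x}^S\in\MatPol{H_S}$ and $\tilde{x}^{\Compl{S}}\in\MatPol{H_{\Compl{S}}}$. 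Since the $\oplus$-decomposition of a vector in $\mathds{Q}^{\Fkt{E}{H}}$ is unique, $\tilde{x}^S=x^S$, whence $x^S\in\MatPol{H_S}$. As $x^S$ was arbitrary this gives $\MatPolFrac{H_S}=\MatPol{H_S}$; swapping the roles of $S$ and $\Compl{S}$ gives the same for $H_{\Compl{S}}$.

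The main obstacle is precisely the construction of $x^{\Compl{S}}$: it is the device that embeds a fractional point of a single contraction into a fractional perfect matching of the whole hypergraph, which is what allows the integrality of $H$ to be transferred downward. Everything after that — matching the degree constraints across the contraction and quoting uniqueness of the decomposition — is routine bookkeeping with the correspondence set up just before \cref{prop:decomposepolytope}.
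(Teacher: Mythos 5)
Your proposal is correct and follows essentially the same route as the paper: construct $x^{\Compl{S}}=\sum_{e\in\Cut{H}{S}}\Fkt{x^S}{e_s}\chi^{M_e}$ from perfect matchings of the matching covered contraction $H_{\Compl{S}}$, glue it to $x^S$ via $\oplus$ to land in $\MatPolFrac{H}=\MatPol{H}$, and then read off $x^S\in\MatPol{H_S}$ from \cref{prop:decomposepolytope}. Your explicit appeal to the uniqueness of the $\oplus$-decomposition is a small clarification the paper leaves implicit, but otherwise the arguments coincide.
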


\begin{proof}
We show that the fractional matching polytope of $H_S$ is integral, the proof for $H_{\Compl{S}}$ is symmetric. 
	Let $x^S\in \MatPolFrac{H_S}$ be arbitrary.
	We will show that $x^S$ lies in the perfect matching polytope of $H_S$.
	
	For every $e\in \Cut{H}{S}$ there exists a perfect matching $M_e$ in $H_{\Compl{S}}$ containing $e_{\Compl{s}}$.
	The vector $x^{\Compl{S}}$ defined by $x^{\Compl{S}} \coloneqq \sum_{e\in \Cut{H}{S}}\Fkt{x^S}{e_s}\chi^{M_e}$ lies in the perfect matching polytope of $H_{\Compl{S}}$ as $\sum_{e\in \Cut{H}{S}}\Fkt{x^S}{e_s} = \Fkt{x^S}{\Cut{H_S}{s}} = 1$.
	In particular, $x^{\Compl{S}}$ satisfies all degree and non-negativity constraints (\cref{constr:degree,constr:nonneg}) for $H_{\Compl{S}}$. 
	Furthermore, $x^S$ and $x^{\Compl{S}}$ agree on $\Cut{H}{S}$.
	Thus, we can define $x\in \mathds{Q}^{\E{H}}$ by $x \coloneqq x^S\oplus x^{\Compl{S}}$.
	This vector is non-negative and satisfies \cref{constr:degree} on $\Fkt{V}{H}$. 
	This means that $x$ lies in the fractional perfect matching polytope of $H$ and thus in the perfect matching polytope of $H$.
	By \cref{prop:decomposepolytope}, $x^S$ lies in the perfect matching polytope of $H_S$.
\end{proof}

Our contribution to the various characterisations of balanced hypergraphs is the following observation that the notion of a balanced hypergraph is closely linked to the integrality of the fractional perfect matching polytope.

\begin{lemma}
	\label{lemma:balancedpoly}
	A hypergraph $H$ is balanced if and only if for all $S\subseteq \Fkt{V}{H}$ the fractional perfect matching polytope of the subhypergraph $\SubHG{H}{S}$ is integral. 
\end{lemma}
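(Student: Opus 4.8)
The plan is to translate the statement into the language of the vertex--edge incidence matrix and then invoke the classical integrality theory of balanced $0/1$ matrices. Let $A=A(H)$ be the matrix with rows indexed by $\Fkt{V}{H}$, columns by $\Fkt{E}{H}$, and a $1$ in position $(v,e)$ exactly when $v\in e$. The fractional perfect matching polytope $\MatPolFrac{\SubHG{H}{S}}$ is precisely the set-partitioning polytope $\CondSet{x\geq 0}{A_S\, x=\mathbf 1}$, where $A_S$ is the incidence matrix of $\SubHG{H}{S}$; up to parallel edges (duplicate columns, which never create fractional vertices) $A_S$ is just the submatrix of $A$ obtained by keeping the rows in $S$ and deleting the resulting zero columns. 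The first thing I would record is the dictionary between the two definitions of ``balanced'': a strong cycle $x_1e_1x_2\cdots x_te_tx_1$ is, by the strongness condition $x_j\in e_i\Rightarrow j\in\Set{i,i+1}$, exactly a $t\times t$ square submatrix of $A$ with all row and column sums equal to $2$ (the incidence matrix of a cycle). Hence $H$ is balanced if and only if $A$ contains no odd such submatrix, i.e.\ $A$ is a balanced matrix in the matrix-theoretic sense. I would then prove the two implications separately.

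For the direction ``$H$ balanced $\Rightarrow$ every $\MatPolFrac{\SubHG{H}{S}}$ is integral'', I would first note heredity: since $A_S$ is a submatrix of $A$ and any square submatrix of $A_S$ witnessing non-balancedness is also a submatrix of $A$, the matrix $A_S$ is balanced, i.e.\ $\SubHG{H}{S}$ is balanced whenever $H$ is. It therefore suffices to show that a balanced hypergraph has an integral fractional perfect matching polytope, and here I would apply the classical theorem of Fulkerson, Hoffman and Oppenheim (equivalently Berge's characterisation of balanced hypergraphs), which states that the set-partitioning polytope $\CondSet{x\geq 0}{A_S x=\mathbf 1}$ of a balanced matrix is integral. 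This is the step carrying the real content, and it is the main obstacle: a from-scratch argument would take a fractional vertex $x^\ast$, delete edges with $x^\ast_e=0$ and contract edges with $x^\ast_e=1$ (both operations pass to balanced subhypergraphs), reduce to the case where every vertex has support-degree at least $2$ while the support columns stay linearly independent, and then extract a strong odd cycle to contradict balancedness. The genuine difficulty is the oddness: in the graph case an even strong cycle yields an alternating $\pm1$ dependence among its edge-columns, but for hypergraphs a cycle edge may contain vertices outside the cycle, so the naive dependence breaks and one really needs the balanced-matrix machinery (bicolourability of partial subhypergraphs), which is why I would cite it rather than reprove it.

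For the converse I would argue the contrapositive: if $H$ is not balanced it has a strong cycle $C=x_1e_1\cdots x_te_tx_1$ of odd length $t$, and I claim $S\coloneqq\Set{x_1,\dots,x_t}$ produces a non-integral polytope. By strongness $e_i\cap S=\Set{x_i,x_{i+1}}$ and each $x_j$ lies in exactly the two cycle edges $e_{j-1},e_j$, so inside $\SubHG{H}{S}$ these restricted edges form a graph odd cycle $C_t$. The half-integral vector $y$ assigning $\tfrac12$ to each of the $t$ cycle edges and $0$ to every other edge satisfies all degree constraints (\cref{constr:degree}) and non-negativity (\cref{constr:nonneg}), hence $y\in\MatPolFrac{\SubHG{H}{S}}$. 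Now take the cost vector that is $0$ on every edge equal to some consecutive pair $\Set{x_i,x_{i+1}}$ and $1$ on all remaining edges; then $y$ has cost $0$. Every integral point is a perfect matching of $\SubHG{H}{S}$, and since the zero-cost edges all have size two they cover an even number of vertices while $\Abs{S}=t$ is odd, so any perfect matching must use at least one positive-cost edge and thus has cost $\geq 1$. Consequently the minimum of this cost over $\MatPolFrac{\SubHG{H}{S}}$ is strictly smaller than its minimum over the convex hull of integral points, so the polytope is not integral, which is exactly what the contrapositive requires.

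In summary, the easy direction is the explicit odd-cycle construction of the third paragraph, and the whole weight of the lemma sits in the implication ``balanced $\Rightarrow$ integral set-partitioning polytope'': I would discharge it by the classical balanced-matrix integrality theorem together with the heredity observation, flagging the hypergraphic odd-cycle extraction as the step a self-contained proof would have to overcome.
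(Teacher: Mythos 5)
Your proof is correct and follows essentially the same route as the paper: the forward direction is discharged by citing the classical integrality theorem for balanced hypergraphs (the paper invokes Berge's description of the matching polytope and passes to the perfect-matching face, while you invoke the set-partitioning version directly, together with the same heredity observation), and the converse uses the identical half-integral odd-cycle vector on the vertex set of a strong odd cycle. Your cost-function/parity argument merely makes explicit the step the paper leaves as an assertion, namely that this vector is not a convex combination of perfect matchings.
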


\begin{proof}
	Let $H$ be a balanced hypergraph, then the matching polytope is given by 
	\begin{equation*}
		\CondSet{x \in \mathds{R}^{\E{H}}}{x\geq 0,~ \Fkt{x}{\Cut{H}{v}}\leq 1},
	\end{equation*}
	see \cite{bergebook}.
	So in the balanced case the matching polytope and the fractional matching polytope coincide.
	As the perfect matching polytope is a face of the matching polytope the claim follows.
	
	On the other hand, suppose $H$ is not balanced.
	In this case we can find $S\subseteq \V{H}$ such that $\SubHG{H}{S}$ contains an odd cycle (using edges of size two) spanning $S$.
	The vector $x_e = 0.5$ for all edges of size two of this odd cycle is non-negative and satisfies \cref{constr:degree} but it cannot be written as a convex combination of incidence vectors of perfect matchings of $\SubHG{H}{S}$.
\end{proof}

Using the previous result, we prove that the tight cut contractions of \uniformable, balanced hypergraphs remain balanced.

\begin{theorem}
	\label{thm:balancedtightcutclosed}
	Let $H$ be a balanced \uniformable hypergraph and $S \subseteq \Fkt{V}{H}$ such that $\Cut{H}{S}$ is a tight cut.
	The two tight cut contractions $H_S$ and $H_{\Compl{S}}$ are balanced.
\end{theorem}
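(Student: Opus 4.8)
The plan is to reduce the statement to the polytopal characterisation of balancedness from \cref{lemma:balancedpoly}. By that lemma it suffices to prove that for \emph{every} vertex set $U\subseteq\V{H_S}$ the fractional perfect matching polytope $\MatPolFrac{\SubHG{(H_S)}{U}}$ is integral (the argument for $H_{\Compl S}$ being symmetric, I would not repeat it). Since $H$ is balanced, \cref{lemma:balancedpoly} also guarantees that $\MatPolFrac{\SubHG{H}{A}}$ is integral for every $A\subseteq\V{H}$, and this is the only place where the hypothesis on $H$ is used. The argument then splits according to whether the contracted vertex $s$ lies in $U$.

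First I would dispose of the case $s\notin U$, where $U\subseteq\Compl S$. Comparing the two edge families shows that $\SubHG{(H_S)}{U}$ and $\SubHG{H}{U}$ coincide up to parallel edges: an edge $e\subseteq\Compl S$ of $H_S$ restricts to $e\cap U$, and a contracted edge $e_s$ with $e\in\Cut{H}{S}$ restricts to $(e\cap\Compl S)\cap U = e\cap U$ because $s\notin U$, which is exactly what $e$ contributes to $\SubHG{H}{U}$. Hence $\MatPolFrac{\SubHG{(H_S)}{U}}$ is integral by balancedness of $H$.

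The substantial case is $s\in U$, say $U=\Set{s}\cup W$ with $W\subseteq\Compl S$. Here I would set $A\coloneqq S\cup W$ and work with $H^\ast\coloneqq\SubHG{H}{A}$, whose fractional perfect matching polytope is integral. The key claim is that $\SubHG{(H_S)}{U}$ is, up to parallel edges, obtained from $H^\ast$ by collapsing the shore $S$ to the single vertex $s$: an edge of $H$ contained in $\Compl S$ restricts to the same edge inside $W$ on both sides, while a cut edge $e\in\Cut{H}{S}$ yields $(e\cap W)\cup\Set{s}$ in $\SubHG{(H_S)}{U}$ and $(e\cap S)\cup(e\cap W)$ in $H^\ast$, which collapses to the same set. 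I would then use uniformability to verify that $\Cut{H^\ast}{S}$ is tight in $H^\ast$; the decisive input is \cref{obs:tightcut}, which gives $\Fkt{m}{S}\equiv_r k$ with $1\le k\le r-1$, so that $S$ carries \emph{no} internal perfect matching and every perfect matching of $H^\ast$ must cross $\Cut{H^\ast}{S}$ exactly once. Granting tightness, \cref{cor:integralpoly} applied to $H^\ast$ yields that $\MatPolFrac{(H^\ast)_S}$ is integral, and reconciling $(H^\ast)_S$ with $\SubHG{(H_S)}{U}$ up to parallel edges finishes the case.

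The main obstacle I anticipate lies precisely in this reconciliation. The tight cut contraction $(H^\ast)_S$ and the $S$-collapse of $H^\ast$ can genuinely differ: a cut edge $e\in\Cut{H}{S}$ with $e\cap W=\emptyset$ restricts to a subset of $S$ and is \emph{deleted} by the contraction, yet it contributes a singleton edge $\Set{s}$ to $\SubHG{(H_S)}{U}$. Since adding a singleton edge can in general destroy integrality of a fractional perfect matching polytope, I cannot simply discard these edges, and this is the delicate point of the whole proof. What I expect to save the argument is once more uniformability: because $\Fkt{m}{S}\equiv_r k\neq 0$, no perfect matching can cover $S$ internally, so the vertex $s$ in $\SubHG{(H_S)}{U}$ behaves like a bona fide contraction vertex rather than an isolated one, and I would argue that under this constraint the presence of the singleton edges neither changes the set of perfect matchings effectively nor introduces fractional vertices. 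Making this last step rigorous — showing that the singleton edges at $s$ are harmless exactly because $S$ admits no internal perfect matching — is where I would spend most of the care.
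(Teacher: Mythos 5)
Your overall strategy---reducing to the polytopal characterisation of \cref{lemma:balancedpoly} and, in the case $s\in U$, passing to the restricted subhypergraph $H^\ast=\SubHG{H}{S\cup W}$---coincides with the paper's, and your treatment of the case $s\notin U$ is fine. The gap lies in how you exploit $H^\ast$. You want to invoke \cref{cor:integralpoly}, which requires $\Cut{H^\ast}{S}$ to be a \emph{tight} cut of $H^\ast$, and you justify tightness by the congruence $\Fkt{m}{S}\equiv_r k$ with $1\le k\le r-1$. This does not suffice. The perfect matchings of the restricted subhypergraph $H^\ast$ are not restrictions of perfect matchings of $H$, so the tightness of $\Cut{H}{S}$ in $H$ gives no control over them; and the edges of $H^\ast$ lying entirely inside $S$ include the truncations $e\cap S$ of every $e\in\Cut{H}{S}$ with $e\cap W=\emptyset$, each of which has $m$-weight exactly $k$ by \cref{obs:tightcut}. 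Consequently a matching of $H^\ast$ consisting of full edges inside $S$ together with a single such truncation can cover $S$ while meeting $\Cut{H^\ast}{S}$ \emph{zero} times, and this is perfectly consistent with $\Fkt{m}{S}\equiv_r k$; likewise nothing forces a perfect matching of $H^\ast$ to meet the cut at most once. So tightness of $\Cut{H^\ast}{S}$ is unproved and fails in general, and \cref{cor:integralpoly} cannot be applied. (A smaller problem: $H^\ast$ need not be matching covered in the paper's sense, e.g.\ it can be disconnected, which the hypotheses of \cref{cor:integralpoly} presuppose.) The second gap is the one you flag yourself: the singleton edges $\Set{s}$ of $\SubHG{H_S}{U}$ coming from cut edges disjoint from $W$ are genuine edges that can carry weight in a fractional point, and you give no argument that they are harmless.

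The paper avoids both problems by not routing through \cref{cor:integralpoly} at all. It takes a fractional point $x$ of $\SubHG{H_S}{S'}$ and lifts it by hand to a vector $z$ on $\SubHG{H}{T}$, distributing the weight $\Fkt{x}{e_s\cap S'}$ of each cut edge over a matching $N_e$ extracted from a perfect matching $M_e$ of the \emph{original} hypergraph $H$ containing $e$; this keeps the lift completely controlled and automatically accounts for the singleton edges. It then applies integrality of $\MatPolFrac{\SubHG{H}{T}}$ from \cref{lemma:balancedpoly} and verifies directly, by a degree count at $s$, that the projections $M'_i$ of the resulting integral matchings are perfect matchings of $\SubHG{H_S}{S'}$; \cref{obs:tightcut} enters only in the final parity contradiction showing that $s$ is covered at least once. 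To repair your argument you would need to replace the appeal to \cref{cor:integralpoly} by an explicit lifting of this kind; the tightness claim itself cannot be salvaged.
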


\begin{proof}
	As $H$ is \uniformable, we can assume $H$ to be $r$-uniform for some $r \in \mathds{Z}$.
	We show that $H_S$ is balanced.
	The proof for $H_{\Compl{S}}$ is symmetric (exchange $S$ with $\Compl{S}$).
	
	Due to \cref{lemma:balancedpoly} it suffices to show that for all $S'\subseteq \Fkt{V}{H_S}$ the perfect matching polytope of $\SubHG{H_S}{S'}$ is given by \cref{constr:nonneg,constr:degree}.
	So let $S'\subseteq \Fkt{V}{H_S}$.
	If $s\notin S'$, then $\SubHG{H_{S}}{S'}=\SubHG{H}{S'}$ and the perfect matching polytope of $\SubHG{H_S}{S'}$ is given by \cref{constr:nonneg,constr:degree} as $H$ is balanced.
	
	Thus, we assume $s\in S'$ and set $T \coloneqq \Brace{S'\setminus \Set{s}}\cup S$.
	Contracting $S$ in the subhypergraph $\SubHG{H}{T}$ of $H$ gives a hypergraph which is isomorphic to $\SubHG{H_{S}}{S'}$.
	Since $H$ is balanced so is $\SubHG{H}{T}$.
	Similar to the way we did in \cref{prop:decomposepolytope} we can identify vectors from $\SubHG{H}{T}$ with vectors in $\SubHG{H}{T}_S$ and $\SubHG{H}{T}_{\Compl{S}}$ and therefore with vectors in $\SubHG{H_S}{S'}$.
	
	Let $x\in \mathds{R}^{E\Brace{\SubHG{H_{S}}{S'}}}$ be a non-negative vector satisfying \cref{constr:degree} of $\SubHG{H_{S}}{S'}$.
	For every $e\in \Cut{H}{S}$ let $M_e$ be a perfect matching in $H$ containing $e$.
	Every perfect matching $M_e$ induces a perfect matching in $\SubHG{H}{T}$ containing $e\cap T$.
	Set $N_e \coloneqq \CondSet{f\in M_e}{f\subseteq S} \cup \Set{e\cap T}$ and look at the vector $z\in \mathds{R}^{E\Brace{\SubHG{H}{T}}}$ defined by $\Fkt{z}{f} \coloneqq \Fkt{x}{f}$ for $f \in \Fkt{E}{\SubHG{H_{S}}{S'}} \cap \Fkt{E}{\SubHG{H}{T}}$.
	Add $\Fkt{x}{e_s \cap S'} \chi^{N_e}$ to $z$ for every $e \in \Cut{H}{S}$.
	Clearly, the resulting $z$ is non-negative.
	
	We show that $z$ satisfies \cref{constr:degree} for every $v \in T$.
	First, let $v \in S' \setminus \Set{s} = T \cap \Compl{S}$.
	An edge in $\SubHG{H_{S}}{S'}$ containing $v$ lies either completely in $\Compl{S}$ or it contains $s$.
	In the first case, $e\in \Fkt{E}{\SubHG{H}{T}}$ and $\Fkt{z}{e} = \Fkt{x}{e}$.
	In the second case, the edge is of the form $e_s \cap S'$ for some $e \in \Cut{H}{S}$ and $\Fkt{z}{e \cap T} = \Fkt{x}{e_s \cap S'}$.
	
	In total we get that $\Fkt{z}{\Cut{}{v}} = \Fkt{x}{\Cut{}{v}} = 1$ for all $v \in S' \setminus \Set{s}$.
	Now, consider a vertex $v \in S$.
	This vertex is incident to exactly one edge in $N_e$ for every $e\in \Cut{H}{S}$, and it is not incident to any edge $f \in \Fkt{E}{\SubHG{H_{S}}{S'}}\cap \Fkt{E}{\SubHG{H}{T}}$.
	Thus, we get 
	\begin{align*}
		\Fkt{z}{\Cut{}{v}} = \sum_{e \in \Cut{H}{S}} \Fkt{x}{e_s \cap S'} = \sum_{\substack{e \in E\Brace{\SubHG{H_{S}}{S'}},\\ s \in e}} \Fkt{x}{e} = \Fkt{x}{\Cut{\SubHG{H_{S}}{S'}}{s}} = 1.
	\end{align*}

	As $H$ is balanced, it follows that $z$ lies in the perfect matching polytope of $\SubHG{H}{T}$.
	Thus, we can write $z$ as a convex combination of incidence vectors of perfect matchings: $z = \sum_{i=1}^k \lambda_i \chi^{M_i}$, where $M_i$ is a perfect matching of $\SubHG{H}{T}$.
	For every $i = 1,\ldots, k$ let
	\begin{align*}
		M'_i\coloneqq\CondSet{f\in M_i}{f\subseteq S' \setminus \Set{s}} \cup \CondSet{e_s \cap S'}{e\in \Cut{H}{S}, e \cap T \in M_i}.
	\end{align*}
	As $\Fkt{x}{e} = \Fkt{z}{e}$ for all $e \in \Fkt{E}{\SubHG{H_{S}}{S'}}\cap \Fkt{E}{\SubHG{H}{T}}$ and $\Fkt{x}{e_s \cap S')} = \Fkt{z}{e \cap T}$ for all $e \in \Cut{H}{S}$, we get $x = \sum_{i=1}^k \lambda_i \chi^{M'_i}$.
	We have to show that each $M'_i$ is a perfect matching in $\SubHG{H_{S}}{S'}$.
	It is clear that every $v \in S'\setminus \Set{s}$ is covered exactly once by each $M_i$.
	Furthermore, the vertex $s$ is contained in at least one of the edges of $M_i$.
	Otherwise, no $e \in \Cut{H}{S}$ with $e \cap T \in M_i$ exists.
	This implies that $e \subseteq S$ or $e \subseteq \Compl{S}$ for $e \cap T\in M_i$, and the edges $e\subseteq S$ of $M_i$ form a matching covering $S$.
	But then $\Abs{S}$ is divisible by $r$ which is impossible by \cref{obs:tightcut} because $S$ defines a tight cut and $H$ is uniform.
	
	It remains to show that $s$ is covered by exactly one edge of $M'_i$.
	We get
	\begin{itemize}
		\item[] $1 = \sum_{e\in \Cut{H}{S}}\Fkt{x}{e_s \cap S'} = \sum_{i=1}^k \lambda_i \Deg{\PartHG{H_S}{M'_i}}{s} \geq \sum_{i=1}^k \lambda_i = 1$.
	\end{itemize}
	This implies $\Deg{\PartHG{H_S}{M'_i}}{s}=1$ and thus $M'_i$ is a perfect matching.
	Thus, $x = \sum_{i=1}^k \lambda_i \chi^{M'_i}$ is a convex combination of perfect matchings, so $x$ lies in the perfect matching polytope of $\SubHG{H_{S}}{S'}$. This concludes the proof.
\end{proof}

We have seen that if $\Cut{H}{S}$ is a tight cut in a hypergraph $H$, then $H$ is matching covered if and only if $H_S$ and $H_{\Compl{S}}$ are matching covered. In the graph case there is a larger class of cuts with this property, so-called separating cuts, which we investigate in the remainder of this section.
\begin{definition}
A cut $\Cut{H}{S}$ in a matching covered hypergraph $H$ is called \emph{separating}\index{separating cut} if $H_S$ and $H_{\Compl{S}}$ are matching covered. 
\end{definition}
\Cref{prop:matcoveredtight} implies that every tight cut is separating. If a graph has a non-tight separating cut, then the non-negativity and degree constraints do not suffice to describe the perfect matching polytope or in other words the fractional perfect matching polytope of this graph is not integral. 

If $\Cut{H}{S}$ is a separating cut in a hypergraph $H$, then the inequality $x(\Cut{H}{S})\geq 1$ might not be valid for the perfect matching polytope. However, if $H$ is \uniformable, then by similar arguments as in \cref{obs:tightcut} $|M\cap \Cut{H}{S}|\geq 1$ holds for all perfect matchings $M$. Thus, $x(\Cut{H}{S})\geq 1$ is a valid inequality for all vectors $x\in \MatPol{H}$. As in the graph case, we use a separating cut that is not tight in a \uniformable hypergraph to construct a non-negative vector that satisfies all degree constraints but does not lie in the perfect matching polytope.
\begin{theorem}\label{thm:uniformseparating}
	If $H$ is a matching covered, \uniformable hypergraph with a separating cut $\Cut{H}{S}$ that is not tight, then the fractional perfect matching polytope is not integral.
\end{theorem}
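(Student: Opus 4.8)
The plan is to exhibit a point of the fractional perfect matching polytope $\MatPolFrac{H}$ that violates the valid inequality $\Fkt{x}{\Cut{H}{S}} \geq 1$, thereby certifying $\MatPol{H} \subsetneq \MatPolFrac{H}$, i.e.\@ non-integrality. Since $\Cut{H}{S}$ is not tight while, as noted just above using uniformability, every perfect matching meets it at least once, there is a perfect matching $N$ of $H$ with $k \coloneqq \Abs{N \cap \Cut{H}{S}} \geq 2$. This $N$ is the source of the violation, and the remaining task is to \emph{dilute} it by perfect matchings crossing the cut exactly once, while keeping all degree constraints intact.

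The key subroutine is: for every edge $f \in N$ produce a perfect matching $M^{f}$ of $H$ with $f \in M^{f}$ and $\Abs{M^{f} \cap \Cut{H}{S}} = 1$. Here the separating hypothesis does the work. If $f \subseteq \Compl{S}$, then $f$ is an edge of the matching covered hypergraph $H_S$, so some perfect matching $P$ of $H_S$ contains $f$; since $s$ has degree $1$ in $P$, this $P$ uses exactly one cut edge $g_s$, and because $H_{\Compl{S}}$ is matching covered there is a perfect matching $R$ of $H_{\Compl{S}}$ containing $g_{\Compl{s}}$. Gluing $P$ and $R$ along $g$ exactly as in the forward direction of \cref{prop:matcoveredtight} --- which does not use tightness --- yields a perfect matching $M^{f} = \CondSet{e \in P}{e \subseteq \Compl{S}} \cup \Set{g} \cup \CondSet{e \in R}{e \subseteq S}$ of $H$ that contains $f$ and crosses $\Cut{H}{S}$ only in $g$. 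The cases $f \subseteq S$ and $f \in \Cut{H}{S}$ are symmetric; for a cut edge $f$ one simply takes $P, R$ through $f_s, f_{\Compl{s}}$.

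With these at hand I would set
\begin{equation*}
	x \coloneqq \frac{1}{\Abs{N}-1}\Brace{\sum_{f \in N} \chi^{M^{f}} - \chi^{N}}.
\end{equation*}
As $N$ and every $M^{f}$ is a perfect matching of $H$, each covers every vertex exactly once, so before scaling the vertex-degree of $\sum_{f \in N}\chi^{M^{f}} - \chi^{N}$ is $\Abs{N}-1$ everywhere, hence $x$ has degree $1$ at every vertex. Nonnegativity is immediate off $N$, and for $f \in N$ the coordinate is at least $\Brace{1-1}/\Brace{\Abs{N}-1} = 0$ because $M^{f}$ itself contains $f$; thus $x \in \MatPolFrac{H}$. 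Finally each $M^{f}$ meets the cut once while $N$ meets it $k$ times, so
\begin{equation*}
	\Fkt{x}{\Cut{H}{S}} = \frac{\Abs{N} - k}{\Abs{N} - 1} < 1
\end{equation*}
since $k \geq 2$. As $\Fkt{x}{\Cut{H}{S}} \geq 1$ holds on $\MatPol{H}$, the point $x$ lies in $\MatPolFrac{H} \setminus \MatPol{H}$, and therefore $\MatPolFrac{H}$ is not integral.

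The genuinely delicate point is the simultaneous control of all three requirements by this single affine combination: the trick is that working with perfect matchings (rather than arbitrary matchings) makes every degree constraint hold automatically under any scaling, while assigning to each edge $f \in N$ its own once-crosser $M^{f}$ protects precisely the coordinates on which $-\chi^{N}$ could force $x$ negative. I expect the bulk of a full write-up to be the careful verification of the gluing step producing $M^{f}$ (that the glued edge set really is a perfect matching of $H$ crossing $\Cut{H}{S}$ exactly once) together with the bookkeeping when several edges of $N$ select the same matching. Note that uniformability enters only to guarantee the validity of $\Fkt{x}{\Cut{H}{S}} \geq 1$ for $\MatPol{H}$; the construction of $x$ itself needs only that the cut is separating and non-tight.
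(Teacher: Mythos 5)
Your proposal is correct and follows essentially the same route as the paper: pick a perfect matching $N$ meeting the non-tight separating cut at least twice, use the separating property to obtain for each $f\in N$ a perfect matching through $f$ crossing the cut exactly once, and form the affine combination $\frac{1}{\Abs{N}-1}\bigl(\sum_{f\in N}\chi^{M^f}-\chi^{N}\bigr)$ to violate $\Fkt{x}{\Cut{H}{S}}\geq 1$ inside the fractional polytope. Your write-up is in fact slightly more explicit than the paper's on the gluing step that produces the once-crossing matchings $M^f$, which the paper leaves to the separating hypothesis without detail.
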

\begin{proof}
	Suppose that $H$ has a separating cut $\Cut{H}{S}$ that is not tight. We proceed as in the graph case~\cite{decarvalho2004} by constructing a vector $x$ lying in the fractional perfect matching polytope with $x(\Cut{H}{S})<1$.
	
	Let $M_0$ be a perfect matching of $H$ with $\Abs{M_0\cap \Cut{H}{S}}\geq 2$. For every $e\in M_0$ let 
	$M_e$ be a perfect matching containing $e$ and intersecting $\Cut{H}{S}$ in exactly one hyperedge, namely $e$. 
	These matchings exists because $\Cut{H}{S}$ is assumed to be a separating cut that is not tight.
	The vector $x\in \mathds Q^{\Fkt{E}{H}}$ defined by
		\begin{align*}
		x = \frac{1}{\Abs{M_0}-1}\left(\sum_{e\in M_0}\chi^{M_e}-\chi^{M_0}\right)
		\end{align*}
	is non-negative and satisfies $x(\delta_H(v))=1$ for all $v\in \Fkt{V}{H}$ but 
	\[x(\Cut{H}{S}) = 
	\frac{\Abs{M_0}-\Abs{M_0\cap \Cut{H}{S}}}{\Abs{M_0}-1}<1.\]
	Thus, $x\notin \MatPol{H}$, which 
	implies 
	that the fractional perfect matching polytope of $H$ is not integral.
\end{proof}
The previous theorem implies that \uniformable hypergraphs for which the non-negativity and degree constraints describe an integral polytope have no non-tight separating cuts. This holds in particular for unimodular, balanced, normal, and mengerian hypergraphs.
\begin{corollary}\label{cor:uniformseparating}
	If $H$ is a \uniformable matching covered hypergraph with an integral fractional perfect matching polytope, then every separating cut of $H$ is tight.
\end{corollary}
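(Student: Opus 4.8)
The plan is to observe that this corollary is nothing more than the contrapositive of \cref{thm:uniformseparating}, so no new work is required. That theorem asserts that whenever a \uniformable matching covered hypergraph $H$ possesses a separating cut $\Cut{H}{S}$ that fails to be tight, its fractional perfect matching polytope $\MatPolFrac{H}$ cannot be integral. Reading this as an implication ``($H$ has a non-tight separating cut) $\Rightarrow$ ($\MatPolFrac{H}$ is not integral)'' and negating both sides yields ``($\MatPolFrac{H}$ is integral) $\Rightarrow$ ($H$ has no non-tight separating cut)''. Since ``every separating cut of $H$ is tight'' is by definition the negation of ``there exists a separating cut of $H$ that is not tight'', this is exactly the statement to be proved.

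Concretely, I would argue by contradiction: assume $H$ is a \uniformable matching covered hypergraph with integral fractional perfect matching polytope, and suppose towards a contradiction that $H$ has a separating cut $\Cut{H}{S}$ that is not tight. Applying \cref{thm:uniformseparating} to this cut produces a non-negative vector satisfying all degree constraints that does not lie in $\MatPol{H}$, so $\MatPol{H} \subsetneq \MatPolFrac{H}$ and $\MatPolFrac{H}$ is not integral, contradicting the hypothesis. Hence every separating cut of $H$ must be tight.

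There is essentially no obstacle here once \cref{thm:uniformseparating} is available: the only point to check is that the hypotheses of the two statements align (both require $H$ to be \uniformable and matching covered), which they do, and that the conclusions are genuine logical negations, which they are. I would therefore simply invoke \cref{thm:uniformseparating} and contrapose, noting that the hypothesis of integrality is in particular satisfied by the unimodular, balanced, normal, and mengerian classes mentioned in the surrounding discussion.
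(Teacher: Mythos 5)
Your proposal is correct and matches the paper exactly: the corollary is stated there without a separate proof precisely because it is the contrapositive of \cref{thm:uniformseparating}, which is the reading you give. Nothing further is needed.
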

If we consider hypergraphs that are not \uniformable, then it is possible that the fractional perfect matching polytope is integral but the hypergraph has a separating cut that is not tight. For example, if we take $H$ to be the complete bipartite graph $K_{2,2}$ together with singleton hyperedges $\{v\}$ for every vertex $v$, then we obtain a unimodular hypergraph with a non-tight separating cut, see \cref{fig:sepcut}.

\begin{figure}
\centering

\begin{tikzpicture}
\tikzset{loop/.style = {shape = circle, draw, inner sep = 4pt, line width = 2pt, DarkTangerine}}

	\node[vertex] (a1) at (0,0){};
	\node[vertex] (a2) at ($(a1)+(0,2)$){};
	
	\node[vertex] (b1) at ($(a1)+(2,0)$){};
	\node[vertex] (b2) at ($(b1)+(0,2)$){};
	
	\node[loop] (l1) at ($(a1)$){};
	\node[loop] (l2) at ($(a2)$){};
	\node[loop] (l3) at ($(b1)$){};
	\node[loop] (l4) at ($(b2)$){};
	\draw[Hedge] (a1) to (b1);
	\draw[Hedge] (a1) to (b2);
	\draw[Hedge] (a2) to (b1);
	\draw[Hedge] (a2) to (b2);

	\coordinate (Sstart) at ($(a1)+(0.8,-0.5)$);
	\coordinate (Send) at ($(a2)+(0.8,0.5)$);
	\def\CutSCurve{%
			(Sstart)  ..(1,1).. (Send)
		}
	\draw[BostonUniversityRed,line width=3pt,opacity=0.7,use Hobby shortcut, closed=false] \CutSCurve;
	\node[BostonUniversityRed] at ($(a1)+(0.5,-0.3)$){$S$};
	\node[BostonUniversityRed] at ($(a1)+(1.2,-0.3)$){$\bar S$};
\end{tikzpicture}

\caption{A unimodular hypergraph with a separating cut that is not tight.}
\label{fig:sepcut}
\end{figure}
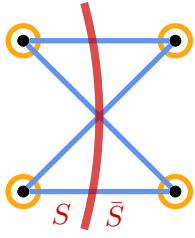

\Cref{cor:uniformseparating} generalizes the fact that a bipartite graph has no non-tight separating cut, which can also be proven without polyhedral methods, see for example \cite{carvalho2002}. Namely, if $\Cut{G}{S}$ is a non-tight separating cut in a graph $G$, then the subgraphs $G[S]$ and $G[\Compl{S}]$ are non-bipartite. For hypergraphs we show that the shores of a non-tight separating cut cannot induce $r$-partite subhypergraphs.
\begin{theorem}
Let $H$ be a matching covered, $r$-uniform hypergraph and $S\subseteq \Fkt{V}{H}$ be a set of vertices such that $\Cut{H}{S}$ is a non-tight separating cut. The subhypergraphs $H[S]$ and $H[\Compl{S}]$ of $H$ induced by $S$ and $\Compl{S}$ are not $r$-partite.
\end{theorem}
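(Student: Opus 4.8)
The plan is to argue by contradiction, and by symmetry it suffices to treat $H[S]$: since $\Cut{H}{S}=\Cut{H}{\Compl{S}}$ and the hypotheses ``separating'' and ``not tight'' are invariant under exchanging $S$ and $\Compl{S}$, the same argument applied to $\Compl{S}$ handles $H[\Compl{S}]$. Recall that an $r$-uniform hypergraph is \emph{$r$-partite} if its vertex set can be partitioned into $r$ classes such that each edge meets every class in exactly one vertex. So suppose for contradiction that $H[S]$ is $r$-partite, with colour classes $V_1,\dots,V_r$ partitioning $S$ and $n_j\coloneqq\Abs{V_j}$; note that each edge of $H$ contained in $S$ has size $r$ and hence meets every class exactly once.

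First I would assemble the perfect matchings provided by the two hypotheses, exactly as in the proof of \cref{thm:uniformseparating}. Since $\Cut{H}{S}$ is not tight and $H$ is $r$-uniform (hence \uniformable), there is a perfect matching $M_0$ of $H$ with $M_0\cap\Cut{H}{S}=\Set{f_1,\dots,f_t}$ for some $t\geq 2$; if the only witness to non-tightness avoids the cut, then $\Abs{S}\equiv_r 0$ and \cref{obs:tightcut} upgrades it to such an $M_0$. Since $\Cut{H}{S}$ is separating, for each crossing edge $f_i$ there is a perfect matching $M_{f_i}$ of $H$ with $M_{f_i}\cap\Cut{H}{S}=\Set{f_i}$.

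The heart of the argument is a colour count. For a perfect matching $M$ crossing the cut, let $M^S$ denote its edges lying inside $S$; these cover exactly $S$ with the $S$-parts of the crossing edges of $M$ removed, and each meets every class once, so they cover $\Abs{M^S}$ vertices of each $V_j$. Applied to $M_{f_i}$ this gives $\Abs{(f_i\cap S)\cap V_j}=n_j-\Abs{M_{f_i}^S}$ for all $i,j$, and applied to $M_0$ (whose sets $f_i\cap S$ are pairwise disjoint) it gives $\sum_{i=1}^t\Abs{(f_i\cap S)\cap V_j}=n_j-\Abs{M_0^S}$. Substituting the former into the latter and rearranging yields
\[
(t-1)\,n_j=\sum_{i=1}^t\Abs{M_{f_i}^S}-\Abs{M_0^S}\qquad\text{for every }j,
\]
whose right-hand side is independent of $j$. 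As $t\geq 2$, all the $n_j$ coincide with a common value $n$, and $n\geq 1$ because $S$ is a nonempty proper shore.

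Finally I would read off the contradiction from parity: summing $\Abs{(f_i\cap S)\cap V_j}=n-\Abs{M_{f_i}^S}$ over the $r$ classes gives $\Abs{f_i\cap S}=r\,(n-\Abs{M_{f_i}^S})$, a positive multiple of $r$. But $f_i$ crosses $\Cut{H}{S}$, so $1\leq\Abs{f_i\cap S}\leq r-1$, and $\Set{1,\dots,r-1}$ contains no multiple of $r$. This contradiction shows $H[S]$ is not $r$-partite, and the symmetric argument gives the same for $H[\Compl{S}]$. I expect the main obstacle to be making the colour count airtight — specifically, verifying that in $M_{f_i}$ and in $M_0$ the only edges incident with $S$ are the internal ones and the recorded crossing edges, so that the class-wise deficiency $n_j-\Abs{M^S}$ genuinely equals the number of uncovered class-$j$ vertices; once this is established, forcing equal class sizes and the mod-$r$ punchline are routine.
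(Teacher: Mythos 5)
Your proof is correct and takes essentially the same approach as the paper's: both use the separating property to produce, for each crossing edge $f$, a perfect matching meeting the cut only in $f$, use non-tightness (together with Proposition~1) to produce a perfect matching with at least two crossing edges, and derive the contradiction by counting, per colour class, the vertices covered by the matching edges lying inside $S$. The only divergence is the endgame — you force all class sizes to be equal and then contradict $1\leq |f_i\cap S|\leq r-1$ modulo $r$, while the paper compares class deficiencies directly to get a strict inequality — and your bookkeeping is, if anything, the more careful of the two.
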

\begin{proof}
We only show that $H[S]$ is not $r$-partite. The proof for $H[\Compl{S}]$ is similar.

Suppose that there exists a partition $S_1\cup \ldots \cup S_r$ of $S$ into $r$ sets such that $\Abs{e\cap S_1}=\ldots = \Abs{e\cap S_r}=1$ for all $e\in \Fkt{E}{H[S]}$.
We choose any hyperedge $f\in \Cut{H}{S}$ and let $M_f$ be a perfect matching of $H$ with $M_f\cap \Cut{H}{S}=\{f\}$.
The set $M'_f \coloneqq \CondSet{e\in M_f}{e\subseteq S}$ is a matching of $H[S]$ covering $S\setminus f$.
Without loss of generality we assume that there exists an index $k$ with $1\leq k\leq r-1$ such that $f$ intersects $S_1,\ldots, S_k$ and $f$ has an empty intersection with $S_{k+1},\ldots, S_r$.
It follows that $\Abs{S_1\setminus f}=\ldots =\Abs{S_k\setminus f}=\Abs{S_{k+1}}=\ldots = \Abs{S_r}$.
Furthermore, as we have chosen $f$ arbitrarily we get that $\Abs{S_1\setminus e}=\ldots = \Abs{S_k\setminus e}=\Abs{S_{k+1}} = \ldots =\Abs{S_r}$ for every $e\in \Cut{H}{S}$.
Now, let $M$ be a perfect matching of $H$ intersecting $\Cut{H}{S}$ in more than one edge.
Again, the set $M' \coloneqq \CondSet{e\in M}{e\subseteq S}$ forms a matching of $H[S]$.
If $\Set{m_1,\ldots, m_s} =  M'\cap \Cut{H}{S}$, then $M'$ covers the vertex set $S\setminus \left(m_1\cup \ldots \cup m_s\right)$.
This implies that $\Abs{S_1\setminus \left(m_1\cup \ldots \cup m_s\right)}=\ldots = \Abs{S_r\setminus \left(m_1\cup \ldots \cup m_s\right)}$, which is impossible as $\Abs{S_1\setminus \left(m_1\cup \ldots \cup m_s\right)} = \Abs{S_1} - s < \Abs{S_r} = \Abs{S_r\setminus \left(m_1\cup \ldots \cup m_s\right)}$.
Thus, $H[S]$ is not $r$-partite.
\end{proof}
As a corollary, we directly obtain that an $r$-partite hypergraph cannot have a non-tight separating cut. 
\begin{corollary}
If $H$ is an $r$-partite hypergraph, then every separating cut is tight.
\end{corollary}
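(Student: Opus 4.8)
The plan is to obtain this as an immediate contrapositive of the preceding theorem, so essentially no new work is required beyond one inheritance observation. First I would note that the hypotheses line up: an $r$-partite hypergraph is by definition $r$-uniform, and the notion of a separating cut is only defined for matching covered hypergraphs, so whenever we speak of a separating cut $\Cut{H}{S}$ of an $r$-partite $H$ we may assume $H$ is matching covered and $r$-uniform, which is exactly what the previous theorem needs.

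The one auxiliary fact I would record is that $r$-partiteness passes to induced subhypergraphs. Concretely, if $H$ has parts $V_1,\dots,V_r$ witnessing its $r$-partiteness and $S\subseteq \Fkt{V}{H}$, then $S\cap V_1,\dots,S\cap V_r$ partition $S$, and every edge $e\in \Fkt{E}{H[S]}$ satisfies $e\subseteq S$, so $\Abs{e\cap(S\cap V_i)}=\Abs{e\cap V_i}=1$ for each $i$. Hence $H[S]$ is again $r$-partite, and symmetrically so is $H[\Compl{S}]$.

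With these in place I would argue by contradiction. Suppose $\Cut{H}{S}$ is a separating cut of the $r$-partite hypergraph $H$ that is \emph{not} tight. Since $H$ is then matching covered and $r$-uniform, the previous theorem applies and tells us that $H[S]$ is not $r$-partite. This directly contradicts the inheritance observation above, which forces $H[S]$ to be $r$-partite. Therefore no separating cut of $H$ can fail to be tight, i.e.\@ every separating cut is tight.

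I do not expect any genuine obstacle here; the proof is a one-line application of the theorem once the inheritance statement is noted. The only point requiring a little care is to use the correct definition of $r$-partite (a partition of the vertex set into $r$ parts that every edge meets exactly once) and to observe that restricting the parts to $S$ may leave some $S\cap V_i$ empty, which can only happen when $H[S]$ has no edges and in any case does not disturb the argument.
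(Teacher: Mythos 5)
Your proof is correct and matches the paper's (implicit) argument: the paper derives the corollary directly from the preceding theorem, exactly as you do, with the only content being the observation that $r$-partiteness is inherited by the induced subhypergraphs $H[S]$ and $H[\Compl{S}]$. Your handling of the possibly empty parts $S\cap V_i$ is a reasonable extra precaution but changes nothing essential.
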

Carvalho, Lucchesi, and Murty proved that the reverse implication of \cref{thm:uniformseparating} holds in the graph case. Namely, every graph with a non-integral fractional matching polytope contains a separating cut that is not tight. This is not true for hypergraphs of rank at least three. No $3$-partite hypergraph has a non-tight separating cut but there are $3$-partite hypergraphs with a non-integral fractional perfect matching polytope, for example a complete $3$-partite hypergraph.

\section{Algorithmic Consequences}\label{sec:algo}

Finding a perfect matching of maximum weight in a hypergraph with some weights on the edges is an \NP-complete problem.
In practice, it can be solved quite efficiently within a branch-and-cut framework by an integer programming solver.
However, the performance heavily depends on the size of the hypergraph.
Therefore, it is of great use if one can decompose the problem into smaller parts.
This can be done using the tight cut decomposition procedure as we have seen in the previous section.

Namely, every tight cut decomposition yields a decomposition of the perfect matching polytope.
By the uniqueness result of \cref{thm:unique}, all tight cut decompositions of a \uniformable hypergraph are equally good.
Thus, in order to determine a tight cut decomposition we only need an algorithm that finds a non-trivial tight cut or concludes that none exists.
We do not know whether a polynomial time algorithm for this problem exists.
In graphs one can determine a tight cut decomposition in polynomial time by computing the brick decomposition (a special kind of tight cut decomposition) of a graph, this is described in the second section of a result by Edmonds et al.~\cite{edmonds1982}.
For hypergraphs it is not clear how to generalise this result.

In this section we give a polynomial time algorithm that finds a non-trivial tight cut in a uniform balanced hypergraph, or concludes that none exists. In a uniform balanced hypergraph every separating cut is tight by \cref{cor:uniformseparating}. We can exploit this fact as follows: 

For every $e\in \Fkt{E}{H}$ we compute a perfect matching $M_e$ containing $e$.
This can be done in polynomial time by linear programming as the fractional perfect matching polytope of a balanced hypergraph is integral.
Now, if $\Cut{H}{S}$ is a tight cut, then $\Abs{M_e\cap \Cut{H}{S}}=1$ for all $e\in \Fkt{E}{H}$.
On the other hand, if $\Cut{H}{S}$ is a cut such that $\Abs{M_e\cap \Cut{H}{S}}=1$ for all $e\in \Fkt{E}{H}$, then $\Cut{H}{S}$ is a separating cut and also a tight cut as $H$ is a uniform balanced hypergraph.
In total, $\Cut{H}{S}$ is tight if and only if $\Abs{M_e\cap \Cut{H}{S}}=1$ for all $e\in \Fkt{E}{H}$, where $\CondSet{M_e}{e\in \Fkt{E}{H}}$ is an arbitrary set of perfect matchings such that $M_e$ contains $e$ for every $e\in \Fkt{E}{H}$.  

For every $e\in \Fkt{E}{H}$ we choose a perfect matching $M_e$ such that $e\in M_e$, and define a weight function $w:\Fkt{E}{H}\rightarrow \mathds Z$ by $w(f) \coloneqq \Abs{\CondSet{e\in \Fkt{E}{H}}{f\in M_e}}$ for every $f\in \Fkt{E}{H}$.
The value $w(f)$ of the function $w$ at a hyperedge $f$ is equal to the number of perfect matchings in the set $\CondSet{M_e}{e\in \Fkt{E}{H}}$ containing $f$.
If $S\subseteq \Fkt{V}{H}$ is such that its size is not divisible by $r$, then $\Abs{M\cap \Cut{H}{S}}\geq 1$ for all perfect matchings $M$ of $H$.
In particular, $\Abs{M_e\cap \Cut{H}{S}}=1$ for all $e\in \Fkt{E}{H}$ if and only if $w(\Cut{H}{S})=\Abs{\Fkt{E}{H}}$ holds.
Thus, $S\subseteq \Fkt{V}{H}$ defines a tight cut if and only if $\Abs{S}\not \equiv_r 0$ and $w(\Cut{H}{S})=\Abs{\Fkt{E}{H}}$.
By Corollary 10.4.7~\cite{GroetschelLovaszSchrijver1988}, we can solve $\min\CondSet{w(\Cut{H}{S})}{S\subseteq \Fkt{V}{H}, \Abs{S}\not \equiv_r 0}$ in polynomial time.
However, we want to find a non-trivial tight cut, thus we have to demand that $\Abs{S}\geq 2$ and $\Abs{S}\leq |\Fkt{V}{H}|-2$. 
We show that also the optimization problem
\begin{align}
	\min\CondSet{w(\Cut{H}{S})}{S\subseteq \Fkt{V}{H}, 2\leq \Abs{S}\leq |\Fkt{V}{H}|-2, \Abs{S}\not \equiv_r 0}\label{non-trivTightCut}
\end{align}
is polynomial-time solvable.

Therefore, let $A,B\subseteq \Fkt{V}{H}$ be disjoint sets of vertices.
The family of sets $\mathcal C(A,B)\coloneqq\{S\subseteq \Fkt{V}{H} \mid A\subseteq S\subseteq \Fkt{V}{H}\setminus B\}$ has the property that for every $S,T\in \mathcal C(A,B)$ also $S\cap T$ and $S\cup T$ lie in $\mathcal C(A,B)$.
Such a family is called a lattice family~\cite{GroetschelLovaszSchrijver1988}.
Again, by Corollary 10.4.7~\cite{GroetschelLovaszSchrijver1988} applied to $\mathcal C(A,B)$, we can solve $\min\CondSet{w(\Cut{H}{S})}{S\in \mathcal C(A,B), \Abs{S}\not \equiv_r 0}$ in polynomial time for every pair of fixed sets $A,B\subseteq \Fkt{V}{H}$. 

Now, problem \cref{non-trivTightCut} can be solved by calculating for all disjoint subsets $A,B$ of $\Fkt{V}{H}$ with $\Abs{A}=\Abs{B}=2$ an optimal solution $S_{A,B}$ to the optimization problem $\min\{w(\Cut{H}{S}) \mid S\in \mathcal C(A,B), \Abs{S}\not \equiv_r 0\}$, and choosing a set
\begin{align*}
	S^* \coloneqq \text{argmin}\CondSet{w(\delta_H(S_{A,B}))}{A,B\subseteq \Fkt{V}{H}, A\cap B= \emptyset, \Abs{A}=\Abs{B}=2}.
\end{align*}

 If $w(S^*)=\Abs{\Fkt{E}{H}}$, then $\delta_H(S^*)$ is a non-trivial tight cut in $H$.
 Otherwise, $w(S^*)>\Abs{\Fkt{E}{H}}$, and $H$ contains only trivial tight cuts.
 As there are $\mathcal O(\Abs{\Fkt{V}{H}}^4)$ subsets with $A,B\subseteq \Fkt{V}{H}$ with $\Abs{A}=\Abs{B}=2$, this algorithm runs in polynomial time.
 In total, we get the following result.
\begin{theorem}
Let $H$ be an $r$-uniform, matching covered, balanced hypergraph.
There exists a polynomial time algorithm that either outputs a non-trivial tight cut $\Cut{H}{S}$ or concludes that $H$ has only trivial tight cuts.
\end{theorem}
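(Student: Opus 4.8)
The plan is to turn the decision ``does $H$ have a non-trivial tight cut?'' into a polynomial number of submodular minimisation problems, each solvable in polynomial time, and to extract an explicit cut from an optimal solution. The whole reduction rests on the fact that, for $r$-uniform balanced hypergraphs, being \emph{separating} and being \emph{tight} coincide, which is exactly \cref{cor:uniformseparating} (via the integrality of the fractional perfect matching polytope guaranteed by \cref{lemma:balancedpoly}). This collapse is what lets a family of per-edge perfect matchings serve as a combinatorial certificate for tightness.

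First I would compute, for every $e\in\Fkt{E}{H}$, a perfect matching $M_e$ with $e\in M_e$; this is possible in polynomial time by linear programming since the fractional perfect matching polytope of a balanced hypergraph is integral. The next step is the characterisation that $\Cut{H}{S}$ is tight if and only if $\Abs{M_e\cap\Cut{H}{S}}=1$ for all $e\in\Fkt{E}{H}$: the forward direction is immediate from the definition of tightness, while the converse uses that a cut meeting every $M_e$ exactly once is separating, hence tight by \cref{cor:uniformseparating}. I would then encode this condition through the weight function $\Fkt{w}{f}\coloneqq\Abs{\CondSet{e\in\Fkt{E}{H}}{f\in M_e}}$, so that $\Fkt{w}{\Cut{H}{S}}=\sum_{e}\Abs{M_e\cap\Cut{H}{S}}$. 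By \cref{obs:tightcut}, whenever $\Abs{S}\not\equiv_r 0$ every perfect matching meets $\Cut{H}{S}$ at least once, so $\Fkt{w}{\Cut{H}{S}}\geq\Abs{\Fkt{E}{H}}$, with equality precisely when each $M_e$ meets the cut exactly once. Hence $S$ defines a tight cut if and only if $\Abs{S}\not\equiv_r 0$ and $\Fkt{w}{\Cut{H}{S}}=\Abs{\Fkt{E}{H}}$.

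Finally I would solve the minimisation in \cref{non-trivTightCut}. The cut function $S\mapsto\Fkt{w}{\Cut{H}{S}}$ is submodular, and for fixed disjoint $A,B\subseteq\Fkt{V}{H}$ the family $\mathcal C(A,B)=\CondSet{S}{A\subseteq S\subseteq\Fkt{V}{H}\setminus B}$ is closed under union and intersection, i.e.\ a lattice family. By Corollary~10.4.7 of Grötschel--Lovász--Schrijver, minimising a submodular function over a lattice family subject to the single congruence constraint $\Abs{S}\not\equiv_r 0$ is polynomial. To enforce non-triviality I would range over all $\mathcal O(\Abs{\Fkt{V}{H}}^4)$ choices of disjoint two-element sets $A,B$ (every non-trivial cut has a two-element subset in each shore), compute the optimal $S_{A,B}$ on each $\mathcal C(A,B)$, and take the global minimiser $S^*$; if $\Fkt{w}{\Cut{H}{S^*}}=\Abs{\Fkt{E}{H}}$ I output $\Cut{H}{S^*}$, and otherwise conclude that only trivial tight cuts exist.

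The main obstacle is bridging a purely existential notion (a tight cut) to something algorithmically tractable, and this is resolved by the separating/tight equivalence, which converts tightness into the easily testable, matching-based weight condition. The remaining subtlety is respecting the non-triviality bounds $2\leq\Abs{S}\leq\Abs{\Fkt{V}{H}}-2$ without destroying polynomiality; I expect this to be the only genuinely delicate point, and it is handled cleanly by the lattice-family decomposition over the pairs $A,B$ together with the congruence constraint, all of which fit the hypotheses of the cited submodular minimisation result.
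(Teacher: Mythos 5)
Your proposal is correct and follows essentially the same route as the paper: per-edge perfect matchings computed by LP via integrality of the fractional perfect matching polytope, the separating-implies-tight collapse from \cref{cor:uniformseparating} to certify tightness through the weight function $w$, and submodular minimisation over the lattice families $\mathcal C(A,B)$ with the congruence constraint $\Abs{S}\not\equiv_r 0$ using Corollary~10.4.7 of Gr\"otschel--Lov\'asz--Schrijver. No substantive differences from the paper's argument.
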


\section{Conclusion}


We have shown that the tight cut decomposition is unique for \uniformable hypergraphs and refuted a similar result for general hypergraphs.
We also established that balanced \uniformable hypergraphs are closed under tight cut contractions.
This might allow us to further generalise the theory of braces, bipartite matching covered graphs without non-trivial tight cuts, to the (balanced) hypergraphic setting.

We also made a first step towards a computational approach on the tight cut decomposition by providing a polynomial algorithm that finds a non trivial tight cut in a uniformable and balanced hypergraph if one exists.

Two questions are immediate consequences of the uniqueness result:

\begin{enumerate}
	\item Is there a structural characterisation of hyperbricks?
	
	\item Is there a polynomial time algorithm to find a non-trivial tight cut in a \uniformable matching covered hypergraph?
\end{enumerate}

To the graphic versions of these questions the answers are well known to be yes.
The results answering these questions on graphs are the cornerstones of the theory of matching covered graphs.
So, finding appropriate answers in the world of \uniformable hypergraphs, one might be able to establish a similar theory in a more general setting.

We thank Matthias Walter for pointing out an error in an earlier version of this paper.

\bibliographystyle{splncs03}
\bibliography{literature}

\appendix

\end{document}